\newsavebox{\@brx}
\newcommand{\llangle}[1][]{\savebox{\@brx}{\(\m@th{#1\langle}\)}%
  \mathopen{\copy\@brx\kern-0.5\wd\@brx\usebox{\@brx}}}
\newcommand{\rrangle}[1][]{\savebox{\@brx}{\(\m@th{#1\rangle}\)}%
  \mathclose{\copy\@brx\kern-0.5\wd\@brx\usebox{\@brx}}}
\DeclareFontFamily{OML}{rsfs}{\skewchar\font'177}
\DeclareFontShape{OML}{rsfs}{m}{n}{ <5> <6> rsfs5 <7> <8> <9> rsfs7
  <10> <10.95> <12> <14.4> <17.28> <20.74> <24.88> rsfs10 }{}
\DeclareMathAlphabet{\mathfs}{OML}{rsfs}{m}{n}
\newtheorem{theorem}{Theorem}
\newtheorem{lemma}[theorem]{Lemma}
\newtheorem{proposition}[theorem]{Proposition}
\newtheorem{corollary}[theorem]{Corollary}
\theoremstyle{definition}
\newtheorem{question}{Question}
\newtheorem{ltheorem}{Theorem}
\providecommand{\customgenericname}{}
\newcommand{\newcustomtheorem}[2]{%
  \newenvironment{#1}[1]
  {%
   \renewcommand\customgenericname{#2}%
   \renewcommand\theinnercustomgeneric{##1}%
   \innercustomgeneric
  }
  {\endinnercustomgeneric}
}
\theoremstyle{remark}
\newtheorem{remark}[theorem]{\bf Remark}
\numberwithin{equation}{section}
\numberwithin{theorem}{section}
\newcommand{\intav}[1]{\mathchoice {\mathop{\vrule width 6pt height 3 pt depth  -2.5pt
\kern -8pt \intop}\nolimits_{\kern -6pt#1}} {\mathop{\vrule width
5pt height 3  pt depth -2.6pt \kern -6pt \intop}\nolimits_{#1}}
{\mathop{\vrule width 5pt height 3 pt depth -2.6pt \kern -6pt
\intop}\nolimits_{#1}} {\mathop{\vrule width 5pt height 3 pt depth
-2.6pt \kern -6pt \intop}\nolimits_{#1}}}
\newcommand{\intavl}[1]{\mathchoice {\mathop{\vrule width 6pt height 3 pt depth  -2.5pt
\kern -8pt \intop}\limits_{\kern -6pt#1}} {\mathop{\vrule width 5pt
height 3  pt depth -2.6pt \kern -6pt \intop}\nolimits_{#1}}
{\mathop{\vrule width 5pt height 3 pt depth -2.6pt \kern -6pt
\intop}\nolimits_{#1}} {\mathop{\vrule width 5pt height 3 pt depth
-2.6pt \kern -6pt \intop}\nolimits_{#1}}}
\newcommand{\norm}[1]{{\left\| #1 \right\|}}
\newcommand{\abs}[1]{\lvert#1\rvert}
\newcommand{\vertiii}[1]{{\left\vert\kern-0.2ex\left\vert\kern-0.2ex\left\vert #1 
    \right\vert\kern-0.2ex\right\vert\kern-0.2ex\right\vert}}
\newcommand{\hf}{\widehat{f}}
\newcommand{\hx}{\widehat{x}}
\newcommand{\hy}{\widehat{y}}
\newcommand{\hM}{\widehat{M}}
\newcommand{\hp}{\widehat{p}}
\newcommand{\hmu}{\widehat{\mu}}
\newcommand{\hSing}{\widehat{\mathfs S}}
\newcommand{\un}{\underline}
\newcommand{\mc}{\mathcal}
\newcommand{\ve}{\varepsilon}
\newcommand{\wt}{\widetilde}
\newcommand{\wh}{\widehat}
\newcommand{\vt}{\vartheta}
\newcommand{\vf}{\varphi}
\newcommand{\R}{\mathbb{R}}
\newcommand{\N}{\mathbb{N}}
\newcommand{\Z}{\mathbb{Z}}
\newcommand{\whS}{\widehat{\Sigma}}
\newcommand{\whs}{\widehat{\sigma}}
\def\deg{\operatorname{deg}}
\renewcommand{\exp}[1]{{\rm exp}_{#1}}
\newcommand{\Sas}{d_{\rm Sas}}
\newcommand{\inj}{{\rm inj}}
\newcommand{\nuh}{{\rm NUH}}
\newcommand{\loc}{{\rm loc}}
\newcommand{\musrb}{\mu_{\mbox{\tiny Leb}}}
\begin{document}

\title[Measures of maximal entropy for non-uniformly hyperbolic maps]{Measures of maximal entropy for\\ non-uniformly hyperbolic maps}

\author{Yuri Lima,  Davi Obata and Mauricio Poletti}

\address{Instituto de Matemática e Estatística, Universidade de São Paulo, Rua do Matão, 1010, Cidade Universitária, 05508-090. São Paulo -- SP, Brazil}
\email{yurilima@gmail.com}
\address{Department of Mathematics, Brigham Young University, Provo, Utah, 84602, USA}
\email{davi.obata@mathematics.byu.edu}
\address{Departamento de Matem\'atica, Universidade Federal do Cear\'a (UFC), Campus do Pici,
Bloco 914, CEP 60455-760. Fortaleza -- CE, Brasil}
\email{mpoletti@mat.ufc.br}

\date{\today}
\keywords{Symbolic dynamics, measure of maximal entropy}
\thanks{
YL was supported by 
CNPq/MCTI/FNDCT project 406750/2021-1,
FUNCAP grant UNI-0210-00288.01.00/23, Instituto Serrapilheira grant
``Jangada Din\^{a}mica: Impulsionando Sistemas Din\^{a}micos na Regi\~{a}o Nordeste'', and
FAPESP grant number 2025/11400-7.
DO was partially supported by the National Science Foundation under Grant No. \ DMS-2349380. 
MP was partially supported by CAPES-Finance Code 001, Instituto Serrapilheira grant number Serra-R-2211-41879 and FUNCAP grant AJC 06/2022. 
This study was supported by the Fulbright-ABC Jacob Palis Award for Affiliated Members
of the Brazilian Academy of Sciences.
}

\begin{abstract}
For $C^{1+}$ maps, possibly non-invertible and with singularities, we prove that each homoclinic class
of an ergodic adapted hyperbolic measure carries at most one adapted hyperbolic measure of maximal entropy. 
We then apply this to study the finiteness/uniqueness
of such measures in several different settings: finite horizon dispersing billiards, codimension one partially hyperbolic endomorphisms with ``large'' entropy, robustly non-uniformly hyperbolic volume-preserving endomorphisms as in 
Andersson-Carrasco-Saghin (2025), and Viana maps (1997).
\end{abstract}

\maketitle

\tableofcontents

\section{Introduction}\label{Section-introduction}

In the theory of dynamical systems, entropy quantifies the level of chaos within a system. Among the various definitions of entropy, we focus on {\em topological entropy}, which measures the exponential growth rate of distinguishable trajectories as the measurement precision approaches zero, and {\em measure-theoretic entropy}
-- also known as {\em Kolmogorov-Sina{\u\i} entropy} -- which assesses the information-theoretic complexity of an invariant probability measure. These two concepts are connected, in well-behaved contexts, via the variational principle, which states that the topological entropy equals the supremum of the measure-theoretic entropies 
across all invariant probability measures.

A measure that is invariant and whose metric entropy equals the topological entropy is called a 
{\em measure of maximal entropy}. This measure encapsulates significant dynamical information, and understanding it provides insight into many statistical properties of the system. Since the 1970s, a key question in the field has been the existence, finiteness, and uniqueness of such measures of maximal entropy.

Newhouse proved the existence of measures of maximal entropy for $C^\infty$ maps \cite{Newhouse-Entropy}.
In a recent work,  Buzzi, Crovisier and Sarig proved that $C^{\infty}$ surface diffeomorphisms have a finite number of ergodic measures of maximal entropy, and a unique one if the system is transitive \cite{BCS-Annals}.

A key component of \cite{BCS-Annals} is a criterion that guarantees that two ergodic measures
 of maximal entropy coincide, which is obtained using the symbolic dynamics constructed by Sarig~\cite{Sarig-JAMS}. More specifically, the authors prove that for any hyperbolic ergodic measure $\mu$, there exists an
 {\em irreducible} countable topological Markov shift that ``captures'' the behavior of any other 
 ``sufficiently hyperbolic'' measure which is homoclinically related to $\mu$ (see Section \ref{ss.hom.relation} for the definition of homoclinic relation of measures).  One can then use results of countable topological Markov shifts to conclude the existence of at most one measure of maximal entropy which is homoclinically related to $\mu$. 
Since then, this criterion has been used to prove the finiteness/uniqueness of measures of maximal entropy
in various contexts, see e.g. \cite{obata-standard,LP-homoclinic-flow,mongez-pacifico-finite}.

The two main goals of this work are to generalize this criterion for non-invertible maps with singularities, and 
then to apply it in several different settings. We begin stating a non-precise version of the criterion.
We say that a map is of class $C^{1+}$ when it is $C^{1+\beta}$ for some $\beta>0$.

Let $M$ be a smooth manifold with finite diameter, possibly with boundary,
let $\mathfs D\subset M$ be a closed set, and let 
$f:M\to M$ such that $f|_{M\setminus \mathfs D} :M\setminus \mathfs D \to M$ is a $C^{1+}$ map with
critical set $\mathfs C=\{x\in M\setminus \mathfs D:d_xf\text{ is not invertible}\}$.
The singular set of $f$ is defined as $\mathfs S=\mathfs C \cup \mathfs D$, and we assume
it is closed.

The statement of the next theorem requires some terminology, which we will 
informally introduce now and postpone the formal definition for the next sections.
We require $f$ to satisfy some geometrical and dynamical conditions (A1)--(A7),
introduced in Section \ref{subsection-a1a7}. These conditions are satisfied in many 
cases of interest, e.g. if the curvature tensor of $M$ and its derivatives have bounded norms
and $\|df^{\pm 1}\|,\|d^2f^{\pm 1}\|$ grow at most polynomially fast with respect to
the distance to $\mathfs S$. In particular, they are satisfied for finite horizon dispersing billiards.

Among the $f$--invariant probability measures $\mu$, we focus
on the ones that are \emph{adapted} (the function $\log d(x,\mathfs S)\in L^1(\mu)$)
and {\em hyperbolic} (all Lyapunov exponents are non-zero $\mu$--a.e.).
Finally, two adapted hyperbolic measures $\mu,\nu$ are \emph{homoclinically related} if the stable manifold of
$\mu$--a.e. point transversally intersects the unstable manifold of $\nu$-a.e. point and vice-versa,
where invariant manifolds are considered in the sense of Pesin, see Section~\ref{ss.hom.relation}.

\begin{ltheorem}
\label{thm.criterion}
Let $M$ be a smooth manifold with finite diameter, possibly with boundary, and let $f:M\to M$ be a map that
is $C^{1+}$ outside the singular set $\mathfs S$ and that
verifies conditions $(\mathrm{A}1)$--$(\mathrm{A}7)$.
In each homoclinic class of an ergodic adapted hyperbolic measure there exists
at most one adapted hyperbolic measure of maximal entropy.
When it exists, it is isomorphic to a Bernoulli shift times a finite rotation and its
support equals ${\rm HC}(\mathcal O)$ for every hyperbolic periodic orbit
$\mathcal O$ homoclinically related to $\mu$.
\end{ltheorem}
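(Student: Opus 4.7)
The argument I would follow is modeled on the blueprint of Buzzi--Crovisier--Sarig \cite{BCS-Annals}, adapted to the non-invertible setting with singularities. The main technical step, and the principal obstacle, is the construction of a suitable symbolic coding of $f$: one needs to extend Sarig's scheme to maps that are non-invertible and have a singular set $\mathcal S$, using the adaptedness hypothesis together with $(\mathrm{A}1)$--$(\mathrm{A}7)$ to control the recurrence of typical orbits to $\mathcal S$ and the oscillation of the Lyapunov exponents along them. Assuming this coding has been built (in an earlier section of the paper), it provides a countable topological Markov shift $(\Sigma,\sigma)$ and a Hölder factor map $\pi\colon \Sigma\to\widehat{M}$ into the natural extension of $f$ such that every ergodic adapted hyperbolic measure lifts to a $\sigma$-invariant measure of the same entropy, $\pi$ is finite-to-one on a set of full measure for each such lift, and $\Sigma$ decomposes into irreducible components to which the Gurevich--Sarig theory applies.

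Next, I would translate the homoclinic relation between two ergodic adapted hyperbolic measures of maximal entropy $\mu_1$ and $\mu_2$ into a statement about their symbolic lifts. By hypothesis the two measures are homoclinically related, so there exist hyperbolic periodic orbits $\mathcal O_1, \mathcal O_2$ (one in the support of each) whose invariant manifolds intersect transversely. Lifting $\mathcal O_i$ to periodic orbits $\widehat{\mathcal O}_i$ in $\Sigma$, each transverse heteroclinic intersection produces, via the standard shadowing by $\varepsilon$-chains, paths $\widehat{\mathcal O}_1\to\widehat{\mathcal O}_2$ and $\widehat{\mathcal O}_2\to\widehat{\mathcal O}_1$ in the underlying graph. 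This places $\widehat{\mathcal O}_1$ and $\widehat{\mathcal O}_2$ in the same irreducible component $\Sigma'$. Since Pesin-typical points for $\mu_i$ are shadowed by itineraries in the irreducible class of $\widehat{\mathcal O}_i$, the lifts $\widehat\mu_1,\widehat\mu_2$ are both carried by $\Sigma'$.

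Now I would invoke the theory of countable Markov shifts: an irreducible CMS carries at most one measure of maximal Gurevich entropy, and when such a measure exists it is isomorphic to a Bernoulli shift times a finite rotation. Since $h(\widehat\mu_i)=h(\mu_i)$ and both $\widehat\mu_1,\widehat\mu_2$ attain the maximal entropy on $\Sigma'$, uniqueness on the irreducible component forces $\widehat\mu_1=\widehat\mu_2$, and pushing forward by $\pi$ yields $\mu_1=\mu_2$. The Bernoulli-times-rotation structure then descends through the finite-to-one factor $\pi$, since this class of systems is stable under finite extensions and factors, completing the isomorphism-type claim.

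For the support description, I would use that periodic orbits are dense in any irreducible CMS. Fixing a hyperbolic periodic orbit $\mathcal O$ homoclinically related to $\mu$, the argument of the second paragraph places the lift of $\mathcal O$ in the same irreducible component $\Sigma'$ that carries $\widehat\mu$. Therefore $\supp \widehat\mu$ equals the closure of the periodic orbits of $\sigma$ inside $\Sigma'$, and applying $\pi$ this projects onto the closure of the set of hyperbolic periodic orbits $f$-homoclinically related to $\mathcal O$, that is, ${\rm HC}(\mathcal O)$. The delicate point in this final step is justifying that $\pi$ sends the closure of the periodic points in $\Sigma'$ exactly onto ${\rm HC}(\mathcal O)$ despite the presence of $\mathcal S$; this uses the adaptedness of $\mu$ to rule out accumulation on the singular set and the finite-to-one property of $\pi$ on relevant saturated sets.
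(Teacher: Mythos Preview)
Your overall strategy matches the paper's: lift to the natural extension, code by a countable Markov shift, show that the lifts of homoclinically related measures of maximal entropy land in a single irreducible component, and then invoke Gurevich's uniqueness and Sarig's Bernoulli dichotomy. The structure is right, but your second paragraph contains a genuine gap.

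You claim that a transverse heteroclinic intersection between $\mathcal O_1$ and $\mathcal O_2$ ``produces, via the standard shadowing by $\varepsilon$-chains, paths $\widehat{\mathcal O}_1\to\widehat{\mathcal O}_2$'' in the underlying graph. This is not how a Sarig-type coding behaves: shadowing and $\varepsilon$-pseudo-orbits live in $\widehat M$, not in $\widehat\Sigma$, and a heteroclinic point in $\widehat M$ gives no a priori path in the graph between the particular symbolic lifts you have chosen (each periodic orbit has several lifts, possibly in different irreducible components). The paper's route is to first build, by shadowing in $\widehat M$, a transitive compact $\chi$-hyperbolic set $K_n$ containing $\mathcal O_1,\dots,\mathcal O_n$, and then to invoke a separate structural property of the coding (labelled ($\widehat{\rm C}$9)): any such $K_n$ lifts to a transitive compact subset of $\widehat\Sigma$, hence into a single irreducible component. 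A pigeonhole on the finitely many symbolic lifts of $\mathcal O_1$ then singles out one irreducible component $\Sigma$ containing lifts of \emph{all} hyperbolic periodic orbits homoclinically related to $\mu$.

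Your next assertion, that ``Pesin-typical points for $\mu_i$ are shadowed by itineraries in the irreducible class of $\widehat{\mathcal O}_i$'', is precisely the nontrivial step the paper isolates as its Theorem~\ref{Thm-Main}, Step~2: one takes a recurrent generic point $\underline R$ for the lift $\overline\nu$, approximates it by periodic $\underline q^{(i)}\in\widehat\Sigma^\#$, shows the projections $\widehat\pi(\underline q^{(i)})$ are homoclinically related to $\mu$ (this uses the transitivity of the homoclinic relation, itself requiring the inclination lemma adapted to the singular non-invertible setting), lifts them into $\Sigma$ by the previous step, and finally uses property (C8) --- relative compactness is preserved under changing lifts --- to pass to the limit \emph{inside} $\Sigma$. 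Without ($\widehat{\rm C}$9) and (C8) your argument does not close. The support identification in the paper also leans on the inclination lemma (for $\mathrm{supp}(\widehat\nu)\subset\mathrm{HC}(\mathcal O)$) and on the full support of the symbolic MME in $\Sigma$ (Buzzi--Sarig) rather than on adaptedness ruling out accumulation on $\mathfs S$.
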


Above, a measure is isomorphic to a Bernoulli shift times a finite rotation if its lift to the natural
extension satisfies this property.

In the sequel, we describe the main results obtained in four different settings on which
we apply Theorem \ref{thm.criterion}: finite horizon dispersing billiards; partially hyperbolic endomorphisms with one dimensional center; the new examples of non-uniformly hyperbolic volume-preserving endomorphisms introduced by Andersson, Carrasco and Saghin \cite{ACS}; and strongly transitive non-uniformly expanding maps with singularities, 
which include Viana maps \cite{Viana-maps}.

\subsection{Finite horizon dispersing billiards}

Consider finitely many pairwise disjoint closed, convex subsets $O_1,\ldots,O_\ell$
of $\mathbb{T}^2 = \mathbb{R}^2 / \mathbb{Z}^2$ such that each boundary $\partial O_i$ is a 
$C^3$ curve with strictly positive curvature. Inside the {\em billiard table} 
$\mathfs T:= \mathbb{T}^2 \setminus (\bigcup_{i=1}^\ell O_i)$,
we consider a particle moving at unit speed in straight lines and performing 
elastical collisions with $\partial \mathfs T$. 
Parameterizing each $\partial O_i$ by arclength $r$ and letting $\vf \in [-\pi/2, \pi/2]$ denote the angle
made by the post-collision velocity vector and the inward normal to $\partial O_i$ at the point of collision, 
we obtain a {\em billiard map} $f:M\to M$ where $M = \bigcup_{i=1}^\ell (\partial O_i \times [-\pi/2, \pi/2])$,
which represents the mechanical law of evolution of collisions of the particle with $\partial\mathfs T$.
A billiard of this type is called a {\em dispersing} or {\em Sina{\u\i} billiard}.
These maps were introduced and first studied extensively by Sina{\u\i} \cite{Sinai-billiards},
and are examples of maps with singularities. More specifically,
letting $\mathfs S_0=\{(r,\vf)\in M:|\vf|=\pi/2\}$ and $\mathfs S=\mathfs S_0\cup f^{-1}(\mathfs S_0)$,
then $f$ is a $C^2$ diffeomorphism from $M\setminus \mathfs S$ onto its image. Among the $f$--invariant
measures, we consider the adapted ones, as defined in Section \ref{subsection-a1a7}.

It is well-known that $f$ preserves a smooth probability measure $\musrb$.
Let $\tau(x)$ be the flight time from $x\in M$ 
to $f(x)$. We assume that $f$ has {\em finite horizon}, i.e. $\sup_{x\in M}\tau(x)<\infty$.
Baladi and Demers introduced an ad hoc definition of topological entropy $h_{\rm top}(f)$ for 
finite horizon dispersing billiard and, using transfer operator methods, proved that if such
billiard satisfies a sparse recurrence condition to the singular set then it has a unique measure of maximal entropy,
and that it is adapted, Bernoulli, hyperbolic and fully supported \cite{Baladi-Demers-MME}. 
We prove the following result. Let $E^u_x$ denote the unstable direction at $x$.

\begin{ltheorem}
\label{thm.billiards}
A finite horizon dispersing billiard has at most one adapted measure of maximal entropy. 
When it exists, it is Bernoulli, hyperbolic and fully supported.
Moreover, $\musrb$ is the unique adapted measure of maximal entropy if and only if
the values $\displaystyle \tfrac{1}{p} \log \|df^p_x|_{E^u_x}\|$ coincide
for every non grazing periodic point $x$ of period $p$, in which case the common value equals
$h_{\rm top}(f)$.
\end{ltheorem}
 
Theorem \ref{thm.billiards} is proved in Section \ref{sec.billiards}.
Observe that it does not require the sparse recurrence condition,
but it does not give the existence of measures of maximal entropy.
Up to our knowledge, this is still an open problem. Baladi and Demers
also proved that, under the sparse recurrence condition, if $\musrb$ is the unique
measure of maximal entropy, then $\displaystyle \tfrac{1}{p} \log \|df^p_x|_{E^u_x}\| = h_{\rm top}(f)$
for every non grazing periodic point $x$ of period $p$. The final 
part of Theorem \ref{thm.billiards} gives a complete characterization of this phenomenon, 
again not requiring the sparse recurrence condition.

Recently, Climenhaga et al constructed examples of finite horizon dispersing billiards with 
{\em non-adapted} measures of positive entropy \cite{CDLZ-24}. We then ask the following question.

\begin{question}
Is there a finite horizon dispersing billiard with a non-adapted measure of maximal entropy?
\end{question}

\subsection{Partially hyperbolic endomorphisms}
Let $M$ be a closed smooth Riemannian manifold of dimension $d$. Given $x\in M$ and $k \in \{1, \ldots, d-1\}$, a $k$--dimensional cone $\mathcal{C}$ is a subset of $T_xM$ defined as follows. There exist a decomposition into subspaces $T_xM = E \oplus F$, where $F$ has dimension $k$, and a constant $\eta>0$ such that $\mathcal{C}$ is the set of $k$--dimensional subspaces which are graphs of linear transformations $L:F \to E$ with norm $\|L\| < \eta$.  A continuous cone field is a choice of a cone for each $x\in M$ such that we can chose $x\mapsto E(x)$, $x\mapsto F(x)$ and $x\mapsto \eta(x)$ to be continuous functions.

Let $f:M \to M$ be an endomorphism for which there are constants $\chi, C>0$, a continuous line field $E^c$,
and a $\mathrm{dim}(M) -1$ dimensional continuous cone field $\mathcal{C}^u$, such that:
\begin{enumerate}[$\circ$]
\item The cone $\mathcal{C}^u$ is forward invariant, that is, $\overline{df_x(\mathcal{C}^u(x))} \subset \mathcal{C}^u(f(x))$, and for any $x\in M$ and unit vector $v\in \mathcal{C}^u(x)$,  
it holds $\|df_x^n v\| \geq C^{-1} e^{\chi n}$. We call such a cone field an \emph{unstable cone field};
\item $\|df^n_x|_{E^c}\| \leq Ce^{- \chi n} \|(df_x^n)^{-1}v\|^{-1}$ for all $x\in M$, $n\geq 0$ and $v\in T_{f^n(x)}M$ with $\|v\|=1$;
\item $\mathrm{dim}(E^c) = 1$.
\end{enumerate}
If $f$ verifies the above conditions, we call $f$ a \emph{codimension one partially hyperbolic endomorphism}. 
Notice that $E^c$ is uniquely defined, and in particular it is invariant by the dynamics. However, $E^u$ does not have to be invariant; therefore, we have an unstable cone.

\'Alvarez and Cantarino proved that every $C^1$ codimension one partially hyperbolic endomorphism 
admits a measure of maximal entropy \cite{alvarez-cantarino-23}. They also obtained a condition
for uniqueness when $M=\mathbb{T}^d$: if $f:\mathbb{T}^d \to \mathbb{T}^d$ is a
$C^1$ codimension one partially hyperbolic endomorphism, 
dynamically coherent, with quasi-isometric foliations
and such that its 
linear part\footnote{The linear part of $f$ is the action of $f$ on the first homology group $\mathbb{Z}^d$,
which is a matrix in $\mathrm{GL}(d, \mathbb{Z})$ and in particular induces a map on $\mathbb{T}^d$.}
is hyperbolic and a factor of $f$, then $f$ has a unique measure of maximal entropy.

Our second application of Theorem \ref{thm.criterion} is the following. Let $\operatorname{deg}(f)$ be
the topological degree of $f$ and $h_{\rm top}(f)$ the topological entropy of $f$. 
Then $h_{\rm top}(f)\geq \log \operatorname{deg}(f)$, see \cite{MP77a}.

\begin{ltheorem}\label{thm-phcodone}
Let $f:M \to M$ be a $C^{1+}$ codimension one partially hyperbolic endomorphism.
If $h_{\rm top}(f)>\log \operatorname{deg}(f)$ then $f$ has finitely many measures of maximal entropy.
Moreover, if for any $C^1$ curve $\gamma$ tangent to the unstable cone
the set $\bigcup_{n\geq 0}f^n(\gamma)$ is dense, then $f$ has a unique measure of maximal entropy.
\end{ltheorem}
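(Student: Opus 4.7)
The plan is to verify the hypotheses of Theorem \ref{thm.criterion} and invoke it. Since $M$ is closed and $f$ is smooth everywhere, the singular set is empty, conditions $(\mathrm{A}1)$--$(\mathrm{A}7)$ hold trivially, and every $f$-invariant measure is automatically adapted. The uniform expansion in the unstable cone provides $d-1$ Lyapunov exponents bounded below by $\chi > 0$, so an ergodic $f$-invariant measure $\mu$ is hyperbolic if and only if its center Lyapunov exponent $\lambda^c(\mu) = \int \log \|df|_{E^c}\|\, d\mu$ is nonzero. Existence of at least one MME is given by \'Alvarez--Cantarino \cite{alvarez-cantarino-23}.

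The crux of the proof, and the main technical obstacle, is to show that every ergodic MME $\mu$ is hyperbolic, i.e. $\lambda^c(\mu) \neq 0$; this is where the hypothesis $h_{\rm top}(f) > \log \deg(f)$ is essential. I argue by contradiction: suppose $\mu$ is an ergodic MME with $\lambda^c(\mu) \geq 0$, so that $\mu$ has no negative Lyapunov exponents. The goal is then to establish the entropy bound
\[
h_\mu(f) \leq \log \deg(f),
\]
which directly contradicts $h_\mu(f) = h_{\rm top}(f) > \log \deg(f)$. Proving this bound in the codimension one partially hyperbolic setting is the main difficulty; the strategy is to use a Ledrappier--Young / Rokhlin--Ruelle type entropy decomposition for $C^{1+}$ endomorphisms, controlling the folding-entropy contribution by $\log \deg(f)$ and exploiting the non-contracting hypothesis to show that the remaining positive-exponent contribution collapses. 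Heuristically, a non-contracting ergodic measure of $f$ behaves, for entropy purposes, like an invariant measure of an expanding endomorphism of the same degree, whose topological entropy equals $\log \deg(f)$.

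With hyperbolicity of ergodic MMEs established, Theorem \ref{thm.criterion} implies that each homoclinic class contains at most one ergodic MME, whose support equals $\mathrm{HC}(\mathcal O)$ for every hyperbolic periodic orbit $\mathcal O$ homoclinically related to it. Finiteness of ergodic MMEs then follows: distinct ergodic MMEs have disjoint supports (different homoclinic classes), and a standard upper-semicontinuity/compactness argument at the topological-entropy level, analogous to the one used in \cite{BCS-Annals} for $C^\infty$ surface diffeomorphisms, prevents an infinite family from realizing $h_{\rm top}(f)$ simultaneously.

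For the uniqueness statement, assume that every $C^1$ curve $\gamma$ tangent to the unstable cone satisfies $\overline{\bigcup_{n \geq 0} f^n(\gamma)} = M$. Let $\mathcal O_1, \mathcal O_2$ be any two hyperbolic periodic orbits. Their local unstable manifolds are $(d-1)$-dimensional $C^1$ disks tangent to $E^u$; any $C^1$ curve inside such a disk tangent to the unstable cone has dense forward iterates in $M$, so $W^u(\mathcal O_i)$ itself is dense in $M$. Since $W^s_{\rm loc}(\mathcal O_j)$ is a one-dimensional curve tangent to $E^c$, and $E^c$ is transverse to the unstable cone, density of $W^u(\mathcal O_i)$ forces transversal intersections with $W^s_{\rm loc}(\mathcal O_j)$ in both directions, placing $\mathcal O_1$ and $\mathcal O_2$ in the same homoclinic class. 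Hence all hyperbolic periodic orbits of $f$ lie in a single homoclinic class, and by the previous paragraph $f$ admits a unique ergodic MME.
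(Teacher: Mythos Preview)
Your argument for hyperbolicity of ergodic MMEs is essentially the same as the paper's: the Ledrappier--Young formula for endomorphisms \cite{Lin.shu} gives $h_\mu(f)=F_\mu(f)-\sum_{\lambda_i<0}\lambda_i\gamma_i$ with $F_\mu(f)\le\log\deg(f)$, so $\lambda^c(\mu)\ge 0$ would force $h_\mu(f)\le\log\deg(f)$. In fact this yields a \emph{uniform} bound $\lambda^c(\mu)<-\chi$ with $\chi=h_{\rm top}(f)-\log\deg(f)$, which the paper uses and you should make explicit.

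The genuine gap is your finiteness argument. You write that finiteness follows from ``a standard upper-semicontinuity/compactness argument \ldots\ analogous to the one used in \cite{BCS-Annals}''. This is not available here: the map is only $C^{1+}$, so Yomdin-type upper semicontinuity of entropy is not at hand, and even if it were, disjointness of supports plus weak-$*$ compactness does not by itself rule out infinitely many ergodic MMEs. The paper proceeds differently and concretely. Using the uniform bound $\lambda^c(\mu)<-\chi$ and the Pliss lemma, it shows there is $\delta>0$ with $\mu(\mathcal Z_\chi)>\delta$ for \emph{every} MME $\mu$, where $\mathcal Z_\chi=\{x:\log\|df^n_x|_{E^c}\|<-n\chi/2,\ \forall n\ge 0\}$. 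Points of $\mathcal Z_\chi$ then have local stable manifolds of uniform size $\ell_0$ (a direct $C^0$ estimate along $E^c$), and local unstable manifolds of uniform size by partial hyperbolicity, with uniformly bounded angle. Consequently any two sufficiently close points of $\mathcal Z_\chi$ are homoclinically related, so only finitely many homoclinic classes meet $\mathcal Z_\chi$; since every MME gives positive mass to $\mathcal Z_\chi$, finiteness follows. This Pliss/uniform-Pesin-block mechanism is the missing idea in your sketch.

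For uniqueness, your approach via hyperbolic periodic orbits is close in spirit to the paper's, which works directly with $\hmu$-generic points: take $\hx_1$ generic for $\hmu_1$ with $\vt[\hx_1]\in\mathcal Z_\chi$ (hence $W^s(\hx_1)$ has uniform size), and $\hx_2$ generic for $\hmu_2$; a curve in $W^u(\hx_2)$ tangent to the unstable cone has dense forward orbit, forcing a transverse intersection with $W^s(\hx_1)$, whence $\hmu_2\preceq\hmu_1$, and symmetry plus Theorem~\ref{thm.criterion} concludes. Either route works, but note that without the uniform size of $W^s$ on $\mathcal Z_\chi$ you cannot guarantee that ``dense'' forces ``transverse intersection with $W^s_{\rm loc}$'', so the Pliss-based uniformity is again what makes the argument go through.
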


Similar results for diffeomorphisms have been obtained recently in
\cite{mongez-pacifico-unique,mongez-pacifico-finite}. 
We note that the strict inequality $h_{\rm top}(f)>\log \operatorname{deg}(f)$ is necessary.
For example, let ${\rm Id}:\mathbb T \to \mathbb T$ be the identity and $g:\mathbb T \to \mathbb T$
be the doubling map $g(x) = 2x \pmod 1$.  The map $f = {\rm Id} \times g$
has topological entropy $h_{\rm top}(f) = \log 2$ and $\delta_x\times {\rm Leb}_{\mathbb T}$ is a measure
of maximal entropy, for every $x\in\mathbb T$.  One can easily adapt this construction to build examples having infinitely many hyperbolic measures of maximal entropy. 

%
%

In contrast to \cite{alvarez-cantarino-23}, Theorem \ref{thm-phcodone} requires higher regularity on $f$,
but the uniqueness criterion does not require conditions on $M$ nor on the linear part of $f$. 
We therefore ask the following questions.

\begin{question}
Is Theorem~\ref{thm-phcodone} true for $f\in C^1$?
\end{question}


\begin{question}
Does a transitive $C^{1+}$ codimension one partially hyperbolic endomorphism
has a unique measure of maximal entropy?
\end{question}

\subsection{Non-uniformly hyperbolic volume-preserving endomorphisms}\label{ss.acs}

Our third application deals with the non-uniformly hyperbolic volume-preserving endomorphisms
on $\mathbb T^2$ recently introduced and studied by Andersson, Carrasco and Saghin \cite{ACS}.
Let $\mathcal{U}$ be the set of $C^1$ volume-preserving endomorphisms
satisfying the following condition: there are $N,c>0$ such that for all $x\in \mathbb{T}^2$ and all
$v\in T_x \mathbb{T}^2$ unitary, it holds
\begin{equation}\label{eq.pastexpansion}
\sum_{f^N(y)=x} \frac{\log \norm{(df^N_y)^{-1}v}}{\abs{\det(df^N_y)}}>c.
\end{equation}
It was proved that for every $f\in\mathcal U$
the volume measure is hyperbolic, with one positive and one negative Lyapunov exponent \cite[Theorem A]{ACS}.
Additionally, this open set contains concrete examples without dominated splitting
in almost all homotopy classes, as we now explain. 
Let $s:\mathbb T\to \R$ be defined by $s(x)=\sin(2\pi x)$, or more generally be a function satisfying some properties (listed in \cite[Section 3]{ACS}), and for each $t\in\R$
consider the {\em shear} $h_t:\mathbb T\to \mathbb T$ defined by $h_t(x,y)=(x,y+ts(x))$.
Let $E=(e_{ij})\in\mathrm{GL}(2,\mathbb{R})$ with integer coefficients such that:
\begin{enumerate}[$\circ$]
\item $E$ is not an homothety;
\item $\pm 1$ is not an eigenvalue of $E$;
\item  $\abs{\det(E)}/{\rm gcd}(e_{11},e_{12},e_{21},e_{22})>4$, where ${\rm gcd}$ is the greatest common divisor.  
\end{enumerate}
Finally, let $f_t=E \circ P\circ h_t \circ P^{-1}$, where $P\in \mathrm{SL}(2,\Z)$, and observe that $f_t$ is a volume-preserving endomorphism isotopic to $E$.
Then $f_t\in\mathcal U$ for all $t$ large enough and $P$ satisfying some conditions \cite[Section 3.1 and 3.4]{ACS}. In this context,
we prove the following result.  

\begin{ltheorem}\label{thm.unique.ACS}
For every $t$ large enough, there is a $C^1$ open set $\mathcal{U}_t\subset\mathcal U$
containing $f_t$ 
such that every $f\in \mathcal{U}_t$ of class $C^{1+}$
has at most one measure of maximal entropy. 
If it exists, this measure is Bernoulli and fully supported.
In particular, every $f\in \mathcal{U}_t$ of class $C^\infty$ has a unique measure of maximal entropy.
\end{ltheorem}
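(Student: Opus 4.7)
The plan is to apply Theorem \ref{thm.criterion}. Since every $f\in\mathcal U_t$ is $C^{1+}$ on the closed manifold $\mathbb T^2$ and has no singular set ($\mathfs S=\emptyset$), conditions (A1)--(A7) hold trivially and every invariant measure is automatically adapted. So the theorem reduces to three points: every ergodic measure of maximal entropy is hyperbolic; any two such measures are homoclinically related; and the resulting homoclinic class has full support.

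For hyperbolicity, I would use the following entropy bookkeeping. Because $h_t$ and $P$ are area preserving and $E$ is linear, $|\det df_x|\equiv|\det E|$ pointwise, so for every ergodic $f$-invariant $\mu$ the two Lyapunov exponents satisfy $\lambda_1(\mu)+\lambda_2(\mu)=\log|\det E|$. By \cite{ACS}, the volume measure $m$ is hyperbolic, hence $\lambda_1(m)=\log|\det E|-\lambda_2(m)>\log|\det E|$; Pesin's entropy formula then yields
\[
h_{\rm top}(f)\;\geq\; h_m(f)\;=\;\lambda_1(m)\;>\;\log|\det E|.
\]
This strict inequality persists on a $C^1$ neighborhood of $f_t$, which I take as $\mathcal U_t$. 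Now if an ergodic MME $\mu$ satisfied $\lambda_2(\mu)\geq 0$, Ruelle's inequality would give $h_\mu(f)\leq\lambda_1(\mu)^{+}+\lambda_2(\mu)^{+}=\log|\det E|$, contradicting $h_\mu(f)>\log|\det E|$. Hence $\lambda_2(\mu)<0<\lambda_1(\mu)$, i.e.\ $\mu$ is hyperbolic.

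For homoclinic relatedness of any two ergodic hyperbolic MMEs $\mu_1,\mu_2$, I would first verify that $f_t$ is robustly topologically mixing on $\mathbb T^2$ in a $C^1$ neighborhood $\mathcal U_t$. This should follow from $f_t$ being isotopic to the hyperbolic linear endomorphism $E$, combined with the strong unstable cone and the density of forward iterates of unstable curves guaranteed by the ACS construction, and it is the step that really determines the size of $\mathcal U_t$. Granting it, Katok--Pesin theory supplies hyperbolic periodic orbits $\mathcal O_i$ homoclinically related to each $\mu_i$, with nontrivial stable and unstable branches; topological mixing then makes the unstable set of $\mathcal O_1$ dense in $\mathbb T^2$ and hence forces a transverse intersection with the stable set of $\mathcal O_2$ (and conversely), producing a heteroclinic cycle and showing that $\mu_1$ and $\mu_2$ lie in the same homoclinic class. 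Theorem \ref{thm.criterion} then yields at most one adapted MME, which is Bernoulli and whose support equals $\mathrm{HC}(\mathcal O)$; a second application of the density of unstable leaves upgrades this to $\mathrm{HC}(\mathcal O)=\mathbb T^2$. For $f\in\mathcal U_t$ of class $C^\infty$, existence of an MME is Newhouse's theorem \cite{Newhouse-Entropy}, so uniqueness follows.

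The main obstacle is the robust topological mixing step: in the endomorphism setting one must be careful with the non-invertibility of $f$ when constructing global unstable sets, and one must exploit the explicit algebraic structure of $f_t$ (the assumptions that $E$ is not a homothety and that $\pm 1$ is not an eigenvalue) rather than only the abstract past-expansion hypothesis \eqref{eq.pastexpansion}. Establishing this density/mixing property in a genuine $C^1$ neighborhood, not just at the specific parameter $f_t$, is what pins down the neighborhood $\mathcal U_t$ and is the delicate part of the argument.
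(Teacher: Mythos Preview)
Your hyperbolicity argument is essentially the same as the paper's (Ruelle's inequality plus $\lambda_1+\lambda_2=\log|\det E|$ is equivalent to the folding-entropy form the authors use), and the persistence of $h_{\rm top}(f)>\log|\det E|$ is indeed what defines part of the neighborhood $\mathcal U_t$.

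The gap is in the homoclinic-relation step. You repeatedly invoke a ``strong unstable cone'' and density of forward iterates of unstable curves, but the whole point of the ACS examples is that they have \emph{no dominated splitting}: there is no invariant cone field, and unstable directions depend on the full backward orbit in $\widehat M$. So the mechanism you propose for producing dense unstable sets is not available. More seriously, even granting that $\mathfs V^u(\mathcal O_1)$ is dense, density alone does not force a \emph{transverse} intersection with $W^s(\mathcal O_2)$; in the absence of cone fields a dense unstable set can approach a given stable leaf tangentially. This is exactly the obstruction the paper works to overcome, and your proposal does not address it.

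The paper's route is quite different. Rather than relating two MMEs directly, it shows every hyperbolic MME $\nu$ is homoclinically related to the \emph{Lebesgue} measure $\widehat\mu$, which acts as a hub. For this it proves two structural facts specific to $\mathcal U_t$: (i) quantitative control of the $\mu^-_x$-distribution of unstable directions and uniform lower bounds on unstable manifold sizes (Theorems \ref{thm.unstable.angle} and \ref{thm.big.unstable}, coming from the averaged past-expansion condition \eqref{eq.pastexpansion}); and (ii) large stable manifolds for Lebesgue-a.e.\ point, from \cite[Prop.~6.1]{ACS}, which is what actually pins down $\mathcal U_t$ for $t$ large. Transversality is then obtained not from topological mixing but from the ``dynamical Sard'' theorem of \cite[Theorem~4.2]{BCS-Annals}, which says tangencies between a Pesin stable lamination and a fixed curve have transverse Hausdorff dimension $<1$ and hence zero measure. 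Bernoulli and full support come from Proposition~\ref{thm.unique-bernoulli}(2) with $\nu=\mu$, using that Lebesgue is Bernoulli and fully supported. Your outline would need all of these ingredients (or substitutes for them) to close.
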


Recall that, in this work, a measure is Bernoulli for $f$ if its lift to the natural
extension is Bernoulli.
The class of matrices $E$ for which $f_t\in \mathcal U$ was recently
extended \cite{Janeiro,ramirez-vivas}.
%
%
%
%
%
%
%

As a consequence of our techniques, we also obtain a criterion for the ergodicity of elements
in $\mathcal U$.

\begin{ltheorem}\label{thm.ergodicity}
If $f\in \mathcal{U}$ is $C^{1+}$ and transitive, then $f$ is ergodic with respect to the Lebesgue measure. 
In particular, if $\pm1$ is not an eigenvalue of the linear part of $f$ then $f$ is stably Bernoulli. 
\end{ltheorem}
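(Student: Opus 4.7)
The plan is to combine the non-uniform hyperbolicity of Lebesgue provided by \cite[Theorem A]{ACS} with a Hopf argument carried out on the natural extension. Since $f\in\mathcal{U}$ is $C^{1+}$, the Lebesgue measure $m$ is hyperbolic with one positive and one negative Lyapunov exponent. Lift $f$ to its natural extension $\hat f:\hat M\to\hat M$ and denote by $\hat m$ the corresponding lift of $m$. Then $(\hat f,\hat m)$ is a $C^{1+}$ non-uniformly hyperbolic \emph{invertible} system, so Pesin theory yields stable and unstable manifolds through $\hat m$-a.e. point, with absolutely continuous holonomies.

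To prove ergodicity of $m$, take any $f$-invariant measurable set $A\subset M$ and let $\hat A:=\pi^{-1}(A)$, where $\pi:\hat M\to M$ is the canonical projection; the set $\hat A$ is $\hat f$-invariant. Hopf's argument on $\hat M$, combined with absolute continuity and local product structure of the Pesin foliations, shows that $\hat A$ coincides $\hat m$-a.e.\ with an open subset of $\hat M$. Since $\hat A$ is fibered over $A$ and $\pi$ is continuous, open, and satisfies $\pi_\ast\hat m=m$, the set $A$ is $m$-essentially open in $M$. Consequently each ergodic component of $m$ has support with non-empty interior; topological transitivity of $f$ forbids two disjoint closed invariant sets with non-empty interiors, so $m$ is ergodic.

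For the Bernoulli property, $(\hat f,\hat m)$ is an ergodic $C^{1+}$ non-uniformly hyperbolic system preserving a measure that lifts a smooth one. Pesin's theorem gives the Kolmogorov property, and Ledrappier's theorem then upgrades it to the Bernoulli property; by definition this means $(f,m)$ is Bernoulli. For the final assertion, assume $\pm 1$ is not an eigenvalue of the linear part $E\in\mathrm{GL}(2,\Z)$ of $f$. Then $E$ is hyperbolic and $E:\mathbb{T}^2\to\mathbb{T}^2$ is a linear Anosov endomorphism. By Aoki--Hiraide type results, every map $g$ sufficiently $C^0$-close to $f$ is semi-conjugate to $E$ via a continuous surjection, and is therefore topologically transitive. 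Since $\mathcal U$ is $C^1$-open by its very definition, every $C^{1+}$ map in a $C^1$-neighborhood of $f$ still lies in $\mathcal U$ and is transitive, hence Bernoulli by the previous step. This gives stable Bernoullicity.

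The main technical obstacle is running the Hopf argument in the endomorphism setting: unstable manifolds exist only on the natural extension, so one has to descend statements from $\hat M$ down to $M$ carefully. The key point is that $\hat A=\pi^{-1}(A)$ is saturated by $\pi$-fibers, so combined with the openness of $\pi$, an essentially open fiber-saturated invariant subset of $\hat M$ descends to an essentially open invariant subset of $M$, enabling the final transitivity step.
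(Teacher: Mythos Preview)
Your outline follows the right general strategy---a Hopf argument followed by a topological descent via transitivity---but it glosses over precisely the points where the endomorphism setting differs from the diffeomorphism one, and these are where the paper does real work. The central gap is the sentence ``$(\hat f,\hat m)$ is a $C^{1+}$ non-uniformly hyperbolic invertible system, so Pesin theory yields stable and unstable manifolds through $\hat m$-a.e.\ point, with absolutely continuous holonomies.'' The natural extension $\hat M$ is a solenoid, not a smooth manifold, so Pesin theory for diffeomorphisms does not apply to $\hat f$; in particular there is no ``local product structure of the Pesin foliations'' on $\hat M$, and down on $M$ there is no unstable \emph{lamination} at all (different pre-orbits $\hat x$ over the same $x$ carry different $W^u(\hat x)$), so there is no unstable holonomy whose absolute continuity you could invoke. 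The paper replaces this with two specific ingredients: (i) the endomorphism form of absolute continuity via a $u$-subordinated partition from \cite{qian2002srb}, packaged as Lemma~\ref{abs.cont}; and (ii) the uniform estimates of Theorems~\ref{thm.unstable.angle} and~\ref{thm.big.unstable}, which are nontrivial consequences of the condition $C(f)>0$ defining $\mathcal U$ and which guarantee, for $\mu$-a.e.\ $x$, many pre-orbits $\hat x$ whose $W^u(\hat x)$ is long and transverse to the stable lamination. The Hopf argument is then run in $M$, sliding along stable leaves between two such unstable pieces; without (ii) you cannot produce unstable pieces that cross a fixed stable lamination, and without (i) you cannot carry Birkhoff averages along them.

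Your Bernoulli step has the same defect: Pesin's K-property theorem and Ledrappier's upgrade are for diffeomorphisms of manifolds preserving a smooth measure, and do not apply to $\hat f$ on $\hat M$. The paper instead shows that $\mu$ is ergodic for every power $f^n$ (using that $f^n$ is again transitive, via \cite{Andersson-trans}) and then invokes the coding machinery of Proposition~\ref{thm.unique-bernoulli}(2) with the geometric potential. Finally, your route to transitivity of perturbations through an Aoki--Hiraide semi-conjugacy to the linear part is not justified here: those results concern expansive maps with shadowing, and $f$ need not be uniformly hyperbolic. The paper appeals directly to \cite{Andersson-trans}, which covers exactly this endomorphism situation.
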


We say that $f$ is {\em stably Bernoulli} if every $C^{1+}$ endomorphism $g\in\mathcal U$ close 
to $f$ is Bernoulli. The condition on the linear part in Theorem \ref{thm.ergodicity}
implies transitivity \cite{Andersson-trans}.
 
 The above theorem improves \cite[Theorem C]{ACS}, where the authors prove the existence of stably Bernoulli endomorphisms in a subset of $\mathcal{U}$. The main difference from their theorem to ours is that we do not require ``large'' stable manifolds (see \cite{ACS} for the definition of ``large'').

A natural question is to extend Theorem \ref{thm.unique.ACS} to $\mathcal U$.

\begin{question}
Does every transitive $f\in \mathcal{U}$ admit at most one measure of maximal entropy?
\end{question}

\subsection{Non-uniformly expanding maps with singularities}

As a last application, we consider non-invertible non-uniformly expanding maps.
Let $f$ be a map as in Theorem \ref{thm.criterion}. An $f$--invariant probability
measure is called \emph{expanding} if its Lyapunov exponents are all positive.

Let $a_0\in (1,2)$ be a parameter
such that $t=0$ is pre-periodic for the quadratic map $t\mapsto a_0-t^2$. For fixed $d\geq 2$ and
$\alpha>0$, the associated Viana map is the skew product 
$f=f_{a_0,d,\alpha}:\mathbb S^1\times \mathbb{R}\to \mathbb S^1\times \mathbb{R}$ defined by 
$f(\theta,t)=(d\theta,a_0+\alpha \sin(2\pi \theta)-t^2)$. 
If $\alpha>0$ is small enough then there is a compact interval $I_0\subset (-2,2)$ such that 
$f(\mathbb S^1\times I_0)\subset \textrm{int}(\mathbb S^1\times I_0)$. Hence $f$ has 
an attractor inside $\mathbb S^1\times I_0$, and so we consider the restriction of $f$ to
$\mathbb S^1\times I_0$. Observe that $f$ has a critical
set $\mathbb S^1\times\{0\}$, where $df$ is non-invertible.
These maps were introduced by Viana in  \cite{Viana-maps}, 
where he showed the robust existence of a uniformly positive Lyapunov exponent, 
see also \cite{buzzi-Vmaps}. We let $\mathfs S$
denote this critical set. On a $C^2$ neighborhood of $f$, conditions
(A1)--(A7) are satisfied \cite[Proposition 12.1]{ALP-23}.
In this context, we provide another proof of the uniqueness of the measure of maximal entropy,
whose existence and uniqueness was recently proved~\cite{pinheiro-23}.

\begin{ltheorem}\label{thm.viana.maps}
If $\alpha>0$ is small, there exists a $C^3$ neighborhood $\mathfs U$ of $f$ such that 
every map in $\mathfs U$ has at most one measure of maximal entropy.
When it exists, it is Bernoulli.
\end{ltheorem}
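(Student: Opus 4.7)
The plan is to apply Theorem~\ref{thm.expanding} to every map $g$ in a sufficiently small $C^3$ neighborhood $\mathfs U$ of $f$. Four items need to be verified: $(i)$ $g$ satisfies $(\mathrm A1)$--$(\mathrm A7)$; $(ii)$ $g$ is strongly transitive; $(iii)$ every measure of maximal entropy of $g$ is adapted and expanding; and $(iv)$ $g$ has a fixed point. Given these, Theorem~\ref{thm.expanding} directly yields at most one adapted expanding measure of maximal entropy and Bernoullicity when it exists; item $(iii)$ then upgrades this to the full statement about every measure of maximal entropy.

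The straightforward items are $(i)$, $(ii)$, and $(iv)$. Item $(i)$ is already recorded in the excerpt, as \cite[Proposition~12.1]{ALP-23} ensures that $(\mathrm A1)$--$(\mathrm A7)$ hold on a $C^2$ neighborhood of $f$, hence on any small enough $C^3$ neighborhood $\mathfs U$. Item $(iv)$ follows because the base equation $d\theta = \theta\pmod 1$ forces $\theta$ to lie in the finite set $\{\tfrac{k}{d-1}:k=0,\ldots,d-2\}$, and then $t^2+t = a_0+\alpha\sin(2\pi\theta)$ has real solutions inside $I_0$ for $a_0\in(1,2)$ and $\alpha$ small, producing transversal fixed points of $f$ away from $\mathfs S$ which persist under $C^1$ perturbations. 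Item $(ii)$ uses the strong transitivity of the expanding base $\theta\mapsto d\theta$ combined with the covering properties of the quadratic-like action on fibers: preimages of any small ball spread across the attractor in finitely many iterates, a property well-known for Viana maps and $C^3$-robust after shrinking $\mathfs U$.

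The substantive point is $(iii)$. The base direction already contributes a Lyapunov exponent of $\log d>0$ for any invariant measure, so \emph{expanding} reduces to positivity of the remaining exponent, which for $g$ close to $f$ takes the form $\int \log|\partial_t g|\,d\mu$, with $\partial_t g$ vanishing exactly on $\mathfs S = \mathbb S^1\times\{0\}$. Any invariant measure $\mu$ with $h_\mu(g) = h_{\rm top}(g) \geq \log d$ cannot accumulate near $\mathfs S$: by Ruelle's inequality combined with the Abramov--Rokhlin formula for the skew-product factor $\pi\colon M\to \mathbb S^1$ (and its partially hyperbolic analogue for $g\in\mathfs U$), excessive recurrence to $\mathfs S$ would force the fiber exponent to be non-positive and hence $h_\mu(g)<\log d$, contradicting maximality. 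This simultaneously yields adaptedness and positivity of the fiber exponent, leveraging the non-uniform expansion analysis of \cite{Viana-maps,ALP-23,pinheiro-23}. The main obstacle is to make these quantitative estimates uniform over $\mathfs U$, so that the bounds on recurrence to $\mathfs S$ and on the fiber exponent apply to every $g\in\mathfs U$ and not only to $f$; once this uniformity is achieved, Theorem~\ref{thm.expanding} concludes the proof.
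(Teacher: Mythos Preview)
Your approach is essentially the same as the paper's: apply Theorem~\ref{thm.expanding} after verifying its hypotheses for every $g$ in a $C^3$ neighborhood of $f$. The paper dispatches your item~(iii)---that every measure of maximal entropy is adapted and expanding---by a direct citation of \cite[Proposition~12.2]{ALP-23} rather than the entropy-versus-recurrence sketch you outline (which is plausible but, as you acknowledge, needs the uniformity work already packaged there), and it handles the fixed point and conditions (A1)--(A7) exactly as you do; strong transitivity is taken as known for Viana maps and their $C^3$ perturbations.
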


Theorem \ref{thm.viana.maps} improves \cite[Theorem~1.6]{ALP-23}, where it was proved that $f$ has at
most countably many ergodic measures of maximal entropy, each of them Bernoulli up to a period.
In a recent paper, Li studied the uniqueness of equilibrium states of potentials of small variation
for the Viana maps \cite{Li2025}.

\subsection{Organization of the paper}
In Section~\ref{Section-preliminaries} we introduce some preliminaries.
Section~\ref{Section-ALP} begins with the presentation of the technical conditions (A1)--(A7) required on
Theorem~\ref{thm.criterion} and a recast of the main tools and results of \cite{ALP-23}.
Then we prove Theorem~\ref{Thm-Main} and use it to establish Theorem~\ref{thm.criterion}.

Sections \ref{sec.billiards} to \ref{Section-exp} are devoted to applications of 
Theorem~\ref{thm.criterion}. In Section~\ref{sec.billiards} we prove Theorem \ref{thm.billiards},
in Section~\ref{Section-ph} we prove Theorem~\ref{thm-phcodone},
in Section~\ref{Section-ACS} we prove Theorems~\ref{thm.unique.ACS} and \ref{thm.ergodicity},
and in Section~\ref{Section-exp} we prove Theorem \ref{thm.viana.maps}.

The appendix~\ref{appendix} gives the proof of the inclination lemma in our context.

\medskip
\noindent
{\em{Acknowledgements.}} We are thankful to J. Buzzi, S. Crovisier and O. Sarig for helpful conversations on this work, and the anonymous referee for the comments that greatly improved the manuscript.
YL also thanks Universidade Federal do Ceará, for all the support provided during the development
of this work.

\section{Preliminaries}\label{Section-preliminaries}

\subsection{Topological Markov shifts}

Let $\mathfs G=(V,E)$ be an oriented graph, where $V,E$ are the vertex and edge sets.
We denote edges by $v\to w$, and assume that $V$ is countable.

\medskip
\noindent
{\sc Topological Markov shift (TMS):} It is a pair $(\Sigma,\sigma)$
where
$$
\Sigma:=\{\text{$\Z$--indexed paths on $\mathfs G$}\}=
\left\{\un{v}=\{v_n\}_{n\in\Z}\in V^{\Z}:v_n\to v_{n+1}, \forall n\in\Z\right\}
$$
is the symbolic space and $\sigma:\Sigma\to\Sigma$, $[\sigma(\un v)]_n=v_{n+1}$, is the {\em left shift}. 
We endow $\Sigma$ with the distance $d(\un v,\un w):={\rm exp}[-\inf\{|n|\in\Z:v_n\neq w_n\}]$.
The {\em regular set} of $\Sigma$ is
$$
\Sigma^\#:=\left\{\un v\in\Sigma:\exists v,w\in V\text{ s.t. }\begin{array}{l}v_n=v\text{ for infinitely many }n>0\\
v_n=w\text{ for infinitely many }n<0
\end{array}\right\}.
$$

\medskip
We will sometimes omit $\sigma$ from the definition, referring to $\Sigma$ as a TMS.
We only consider TMS that are \emph{locally compact}, i.e.
for all $v\in V$ the number of ingoing edges $u\to v$ and outgoing edges $v\to w$ is finite.

\medskip
\noindent
{\sc Irreducible component:}
If $\Sigma$ is a TMS defined by an oriented graph
$\mathfs{G}=(V,E)$, its \emph{irreducible components} are the subshifts $\Sigma'\subset\Sigma$
defined over maximal subsets $V'\subset V$ satisfying the following condition:
$$\forall v,w\in V',\;\exists \un v\in \Sigma \text{ and } n\geq 1\text{ such that } v_0=v \text{ and } v_n=w.$$

\subsection{Natural extensions}\label{Section-natural-extension}

Most of the discussion in this section is classical, see e.g. \cite{Rohlin-Exactness} or
\cite[\S 3.1]{Aaronson-book}.
Given a map $f:M\to M$, let
$$
\wh M:=\{\wh x=(x_n)_{n\in\Z}:f(x_{n-1})=x_n, \forall n\in\Z\}.
$$
We will write $\wh x=(\ldots,x_{-1};x_0,x_1,\ldots)$ where ; denotes the separation between
the positions $-1$ and 0.
Although $\wh M$ does depend on $f$, we will not write this dependence. Endow $\wh M$ with the distance
$\wh d(\wh x,\wh y):=\sup\{2^nd(x_n,y_n):n\leq 0\}$; then $\wh M$ is
a metric space. As for TMS, 
the definition of $\widehat d$ is not canonical and affects the
H\"older regularity of $\pi$ in Theorem \ref{Thm-ALP}.
For each $n\in\Z$, let $\vartheta_n:\wh M\to M$ be the projection into the $n$--th
coordinate, $\vartheta_n[\wh x]=x_n$. Consider the 
sigma-algebra in $\wh M$ generated by $\{\vartheta_n:n\leq 0\}$, i.e.
the smallest sigma-algebra that makes all $\vartheta_n$, $n\leq 0$, measurable.
We write $\vt=\vt_0$.

\medskip
\noindent
{\sc Natural extension of $f$:} The {\em natural extension} of $f$ is the 
map $\wh f:\wh M\to\wh M$ defined by
$\wh f(\ldots,x_{-1};x_0,\ldots)=(\ldots,x_0;f(x_0),\ldots)$.
It is an invertible map, with inverse $\wh f^{-1}(\ldots,x_{-1};x_0,\ldots)=(\ldots,x_{-2};x_{-1},\ldots)$.

\begin{remark}
Firstly, observe that if $f:M\to M$ is not continuous then $\hM$ is not compact, even when $M$ is. 
Secondly, we will work inside the subset $\hM\setminus\bigcup_{n\in\Z}{\wh f}^n(\vt^{-1}[\mathfs S])$, see Section~\ref{ss.proof.ALP}. Inside this set, the trajectories of the points are always outside the 
singular set and $df_{\vt[\wh x]}$ is an isomorphism. 
\end{remark}

There is a bijection between $f$--invariant and $\wh f$--invariant probability measures,
as follows.

\medskip
\noindent
{\sc Projection of a measure:} If $\wh\mu$ is an $\wh f$--invariant probability measure, then
$\mu=\wh\mu\circ \vt^{-1}$ is an $f$--invariant probability measure.

\medskip
\noindent
{\sc Lift of a measure:} If $\mu$ is an $f$--invariant probability measure,
let $\wh\mu$ be the unique probability measure on $\wh M$ s.t.
$\wh\mu[\{\wh x\in\wh M:x_n\in A\}]=\mu[A]$ for all $A\subset M$ Borel and all $n\leq 0$.

\medskip
The projection and lift procedures are inverse operations, and they preserve ergodicity 
and the Kolmogorov-Sina{\u\i} entropy, see \cite{Rohlin-Exactness}.

Let $N=\bigsqcup_{x\in M}N_x$ be a vector bundle over $M$, and $A:N\to N$ be measurable
s.t. for every $x\in M\setminus \mathfs S$ the restriction $A|_{N_x}$ is a linear
isomorphism $A_x:N_x\to N_{f(x)}$. The map $A$ defines a (possibly non-invertible)
cocycle $(A^{(n)})_{n\geq 0}$ over $f$ by $A^{(n)}_x=A_{f^{n-1}(x)}\cdots A_{f(x)}A_x$ 
for $x\in M\setminus\bigcup_{k\geq 0}f^{-k}(\mathfs S)$, $n\geq 0$.
There is a way of extending $(A^{(n)})_{n\geq 0}$ to an invertible cocycle over $\wh f$.
For $\wh x\in\wh M\setminus \bigcup_{n\in\Z}\wh f^{n}(\vt^{-1}[\mathfs S])$, let $N_{\wh x}:=N_{\vt[\wh x]}$ and let
$\wh N:=\bigsqcup_{\wh M\setminus \bigcup_{n\in\Z}\wh f^{n}(\vt^{-1}[\mathfs S])} N_{\wh x}$, 
a vector bundle over $\wh M\setminus \bigcup_{n\in\Z}\wh f^{n}(\vt^{-1}[\mathfs S])$.
Define the map $\wh A:\wh N\to\wh N$, $\wh A_{\wh x}:=A_{\vt[\wh x]}$.
For $\wh x=(x_n)_{n\in\Z}$ in $\wh M\setminus \bigcup_{n\in\Z}\wh f^{n}(\vt^{-1}[\mathfs S])$, define 
$$
\wh A^{(n)}_{\wh x}:=
\left\{
\begin{array}{ll}
A^{(n)}_{x_0}&,\text{ if }n\geq 0\\
A_{x_{-n}}^{-1}\cdots A_{x_{-2}}^{-1}A_{x_{-1}}^{-1}&,\text{ if }n\leq 0.
\end{array}
\right.
$$
By definition,
$\wh A^{(m+n)}_{\wh x}=\wh A^{(m)}_{\wh f^n(\wh x)}\wh A^{(n)}_{\wh x}$
for all $m,n\in\Z$, hence $(\wh A^{(n)})_{n\in\Z}$
is an invertible cocycle over $\wh f$.

We will use $\wh{TM}$ for the fiber bunde over $\wh M$ induced by $TM$ and $\wh{df}$ for the cocycle induced by the derivative cocycle $df$.

\section{Global symbolic dynamics}\label{Section-ALP}

In this section, we state and prove Theorem \ref{Thm-ALP}, which is a version 
of \cite[Theorem 3.5]{BCS-Annals} for non-invertible maps with singularities.
Theorem \ref{Thm-ALP} is a stronger result than the Main Theorem of \cite{ALP-23},
as we obtain various properties of the coding, notably condition ($\wh{\rm C}$9).
To maintain the same generality of \cite{ALP-23}, we require that the map satisfies
some properties, called (A1)--(A7), which
control the geometry and dynamics near the singular set. After stating the first theorem below,
we recall (A1)--(A7) following \cite{ALP-23} and then prove Theorem \ref{Thm-ALP}. 
Below, we use the same labeling of \cite{BCS-Annals}, which lists
the properties of the coding by (C1) to (C9).

\begin{theorem}\label{Thm-ALP}
Let $M$ be a smooth Riemannian manifold with finite diameter, $f$ a map on $M$,
and assume that $M,f$ satisfy assumptions {\rm (A1)--(A7)}. For all $\chi>0$, there is
a locally compact countable topological Markov shift $(\wh\Sigma,\wh\sigma)$ and
a H\"older continuous map $\wh \pi:\wh\Sigma\to\wh M$ such that $\wh \pi\circ\wh \sigma=\wh f\circ\wh\pi$ and:
\begin{enumerate}[{\rm (C1)}]
\item[{\rm (C1)}] The restriction $\wh\pi\restriction_{\whS^\#}$ is finite-to-one: 
if $\un v\in\whS^\#$ with $v_n=v$ for infinitely many $n>0$ and $v_n=w$ for infinitely many $n<0$,
then $\#\{\un w\in \whS^\#:\wh\pi(\un w)=\pi(\un v)\}$ is bounded by a constant $N(v,w)$.
\item[{\rm ($\wh{\rm C}$2)}] 
\begin{enumerate}[{\rm (a)}]
\item If $\mu$ is adapted and $\chi$--hyperbolic then $\wh\mu[\wh\pi(\whS^\#)]=1$, and
there exists $\nu$ a $\whs$--invariant probability measure on $\whS$
such that $\wh\mu=\nu\circ\wh\pi^{-1}$ and $h_{\nu}(\whs)=h_\mu(f)$.
\item If $\nu$ is $\whs$--invariant probability measure on $\whS$ then
$\wh\mu=\nu\circ\wh\pi^{-1}$ is hyperbolic with $h_{\wh\mu}(\wh f)=h_{\nu}(\whs)$.
\end{enumerate}
\item[{\rm (C3)}] For every $\wh x\in\wh\pi(\whS)$ there is a splitting 
$\wh{TM}_{\wh x}=E^s_{\wh x}\oplus E^u_{\wh x}$ such that:
\begin{enumerate}[i.]
\item[{\rm (i)}] $\limsup\limits_{n\to+\infty}\tfrac{1}{n}\log\|\wh{df}^{(n)}|_{E^s_{\wh x}}\|\leq -\frac{\chi}{2}$;
\item[{\rm (ii)}] $\limsup\limits_{n\to+\infty}\tfrac{1}{n}\log\|\wh{df}^{(-n)}|_{E^u_{\wh x}}\|\leq -\frac{\chi}{2}$.
\end{enumerate}
The maps $\un v\in\whS\mapsto E^{s/u}_{\wh\pi(\un v)}$ are H\"older continuous.
\item[{\rm (C4)}] For every $\wh x\in\wh\pi(\whS)$ there are $C^1$ submanifolds 
$W^{s/u}_{\wh x}\subset M$ passing through $\vt[\wh x]$ such that:
\begin{enumerate}[i.]
\item[{\rm (i)}] $T_{\vt[\wh x]}W^{s}_{\wh x}=E^s_{\wh x}$ and $d(f^n(y),f^n(z))\leq e^{-\frac{\chi}{2}n}$
for all $y,z\in W^{s}_{\wh x}$ and $n\geq 0$;
\item[{\rm (ii)}] $T_{\vt[\wh x]}W^{u}_{\wh x}=E^u_{\wh x}$ and 
$d(f^{-1}_{x_{-n}}\circ \cdots \circ f^{-1}_{x_{-1}}(y),f^{-1}_{x_{-n}}\circ \cdots \circ f^{-1}_{x_{-1}}(z))\leq e^{-\frac{\chi}{2}n}$ for all $y,z\in W^{u}_{\wh x}$ and $n\geq 0$.
\end{enumerate}
\item[{\rm (C5)}] {\sc Bowen property:} There is a symmetric binary relation $\sim$ on the alphabet 
$V$ of $\whS$ such that:
\begin{enumerate}[i.]
\item[{\rm (i)}] $\sim$ is locally finite: for every $v\in V$, it holds $\#\{w\in V:w\sim v\}<\infty$;
\item[{\rm (ii)}] If $\un v,\un w\in\whS^\#$, then $\wh\pi(\un v)=\wh\pi(\un w)$ if and only if
$v_n\sim w_n$ for all $n\in\Z$.
\end{enumerate}
\item[{\rm (C6)}] If $\nu$ is $\wh\sigma$--invariant, then the projection $\wh\mu=\nu\circ\wh\pi^{-1}$
is a $\chi/3$--hyperbolic measure. 
\item[{\rm (C7)}] For every $\chi'>0$, the set of ergodic $\sigma$--invariant measures $\nu$ 
such that $\nu\circ\wh\pi^{-1}$ is $\chi'$--hyperbolic is open in the relative weak--$^*$ topology of 
$\mathbb P_{\rm e}(\whS)$.
\item[{\rm (C8)}] For any relatively compact sequence $\un v^1,\un v^2,\ldots\in\whS^\#$, if
$\un w^1,\un w^2,\ldots\in\whS^\#$ satisfies $\wh\pi(\un v^i)=\wh\pi(\un w^i)$ for all $i\geq 1$
then $\un w^1,\un w^2,\ldots$ is also relatively compact.
\item[{\rm ($\wh{\rm C}$9)}] If $K\subset\wh M$ is a transitive $\wh f$--invariant compact $\chi$--hyperbolic set,
then there is a {\em transitive} $\wh\sigma$--invariant compact set $X\subset\whS$ such that $\wh\pi(X)=K$.
\end{enumerate}
\end{theorem}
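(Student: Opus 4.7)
The plan is to obtain the symbolic model by combining the Main Theorem of \cite{ALP-23}, which provides the base-level symbolic dynamics for $f:M\to M$, with the natural extension formalism of Section~\ref{Section-natural-extension}. The Main Theorem of \cite{ALP-23} produces a locally compact countable TMS $(\whS,\whs)$ and a H\"older map $\pi:\whS\to M$ intertwining $\whs$ with $f$, satisfying the $M$-level analogues of (C1)--(C8). Since $\whS$ is two-sided and $\pi\circ\whs=f\circ\pi$, one defines
\[
\wh\pi(\un v):=\bigl(\pi(\whs^n \un v)\bigr)_{n\in\Z}\in\wh M,
\]
which does lie in $\wh M$ because $f\circ\pi=\pi\circ\whs$. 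The intertwining $\wh\pi\circ\whs=\wh f\circ\wh\pi$ is then immediate, and H\"older continuity of $\wh\pi$ follows from that of $\pi$ combined with the exponentially decaying weights in the definition of $\wh d$.

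Properties (C3), (C4), (C5), (C7), (C8) concern structure at coordinate $\vt_0$ and at forward/backward iterates, so they transfer directly via $\vt_0\circ\wh\pi=\pi$; property (C1) is the finite-to-one statement on $\whS^\#$ already proved in \cite{ALP-23}. For (C6), if $\nu$ is $\whs$-invariant then $\mu:=\nu\circ\pi^{-1}$ is adapted and $\chi/3$-hyperbolic by \cite{ALP-23}, and the lift--projection correspondence of Section~\ref{Section-natural-extension} identifies $\nu\circ\wh\pi^{-1}$ with the unique lift $\wh\mu$ of $\mu$; since this correspondence preserves adaptedness and Lyapunov exponents, (C6) follows.

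The heart of the proof is $(\wh{\rm C}2)$ and $(\wh{\rm C}9)$. For $(\wh{\rm C}2)$(a), given an adapted $\chi$-hyperbolic $\mu$, invoke \cite{ALP-23} to produce a $\whs$-invariant $\nu$ with $\mu=\nu\circ\pi^{-1}$, $\nu[\whS^\#]=1$, and $h_\nu(\whs)=h_\mu(f)$. The uniqueness of the lift of $\mu$ combined with $\vt_0\circ\wh\pi=\pi$ forces $\nu\circ\wh\pi^{-1}=\wh\mu$; the lift--projection entropy identity $h_{\wh\mu}(\wh f)=h_\mu(f)$ from \cite{Rohlin-Exactness} then gives $h_\nu(\whs)=h_{\wh\mu}(\wh f)$. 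The support statement $\wh\mu[\wh\pi(\whS^\#)]=1$ follows from $\nu[\whS^\#]=1$ and $\wh\mu=\nu\circ\wh\pi^{-1}$. Part (b) is a consequence of (C6) combined with the same entropy identity.

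The main obstacle is $(\wh{\rm C}9)$. Following the scheme of \cite[Section 3.3]{BCS-Annals}, I would choose a dense $\wh f$-orbit in $K$, use (C1) and (C8) to extract a relatively compact lifted orbit in $\whS^\#$, and take $X$ to be the closure of its $\whs$-orbit, yielding a compact $\whs$-invariant set with $\wh\pi(X)=K$. Transitivity of $X$ is the delicate step: one must argue that vertices recurring infinitely often in the chosen lifted sequence belong to a single irreducible component of $\whS$, using the Bowen property (C5) and the inclination lemma of Appendix~\ref{appendix} to translate chaining of orbits inside $K$ into admissible symbolic paths. Local compactness of the coding and uniform Pesin-block estimates along $K$ should provide the compactness needed for this argument to close.
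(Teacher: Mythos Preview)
Your reduction of most properties to \cite{ALP-23} is broadly correct, though note that the Main Theorem of \cite{ALP-23} already constructs the coding $\wh\pi:\whS\to\wh M$ into the natural extension directly (the Pesin charts, double charts, and rectangles are indexed by points of $\wh M$, not $M$). Your lift formula $\wh\pi(\un v)=(\pi(\whs^n\un v))_{n\in\Z}$ is consistent with this if $\pi=\vt\circ\wh\pi$, so the discrepancy is only expository.

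The real gap is in $(\wh{\rm C}9)$. Your plan to lift a dense orbit and invoke (C1), (C8) does not explain why the lift is relatively compact: (C8) transfers compactness from one lifted sequence to another, but you need an \emph{initial} compact lift, and nothing in your outline produces one. The paper obtains this directly: since $K$ is compact and $\chi$--hyperbolic, the Pesin parameter satisfies $\inf_{\wh x\in K}q(\wh x)>0$, so the canonical coding $\un R(\wh x)$ of every $\wh x\in K$ uses only finitely many rectangles of $\mathfs R$. This finite alphabet $V_0$ gives a compact subshift $X_0\subset\whS$ with $\wh\pi(X_0)\supset K$.

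Your proposed route to transitivity via irreducible components, the Bowen property, and the inclination lemma is also off target; none of these are needed here. The paper's argument is short and different: by Zorn's lemma choose $X\subset X_0$ minimal among compact sets with $\wh\pi(X)\supset K$. Pick $\wh x\in K$ with dense forward orbit and any lift $\un R\in X$. Continuity of $\wh\pi$ gives $\wh\pi(\omega(\whs,\un R))=\omega(\wh f,\wh x)=K$, so minimality forces $X=\omega(\whs,\un R)$, which is transitive and satisfies $\wh\pi(X)=K$. The inclination lemma enters only later, in Section~\ref{Section-homoclinic}, when upgrading to the irreducible coding of Theorem~\ref{Thm-Main}.
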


Above, $\mathbb P_{\rm e}(\whS)$ is the set of $\whs$--invariant ergodic probability measures. 
Since the work of Sarig \cite{Sarig-JAMS}, there has been intense development on
the construction of codings for non-uniformly hyperbolic systems.
We cite the work of 
Ben Ovadia for diffeomorphisms in any dimension \cite{Ben-Ovadia-2019},
Lima and Matheus for two dimensional non-uniformly hyperbolic billiards \cite{Lima-Matheus},
Lima and Sarig for three dimensional flows without fixed points \cite{Lima-Sarig},
Lima for one-dimensional maps \cite{Lima-IHP},
Araujo, Lima and Poletti for non-invertible maps with singularities in any dimension \cite{ALP-23},
and more recently Buzzi, Crovisier and Lima for three dimensional flows without fixed points \cite{BCL},
improving the result of \cite{Lima-Sarig}.

\begin{remark}
Properties {\rm ($\wh{\rm C}$2)}(b) and (C6) are essentially the same.
The reason we use this notation is to maintain the analogy with the notation of \cite{BCS-Annals},
since we believe it eases the readability and comparison with \cite{BCS-Annals}.

\end{remark}

\subsection{Assumptions (A1)--(A7)}\label{subsection-a1a7}

Let $M$ be a smooth Riemannian manifold with finite diameter.
We fix a closed set $\mathfs D\subset M$, which will denote the set
of {\em discontinuities} of the map $f$. Given $x\in M$, let $T_xM$ denote
the tangent space of $M$ at $x$. For $r>0$, let $B_x[r]\subset T_xM$ denote the open ball with center 0
and radius $r$. Given $x\in M\backslash\mathfs D$, let $\inj(x)$ be the {\em injectivity radius} of $M$ at $x$,
and let $\exp{x}:B_x[\inj(x)]\to M$ be the {\em exponential map} at $x$.

Denote the Sasaki metric on $TM$ by $\Sas(\cdot,\cdot)$. When there is no confusion, we
denote the Sasaki metric on $TB_x[r]$ by the same notation. For $x\in M$ and $r>0$,
let $B(x,r)\subset M$ denote the open ball with center $x$ and radius $r$.
The first two assumptions on $M,f$ are about the exponential maps.

\medskip
\noindent
{\sc Regularity of $\exp{x}$:} $\exists a>1$ s.t. for all
$x\in M\backslash\mathfs D$ there is $d(x,\mathfs D)^a<\mathfrak d(x)<1$
s.t. for $\mathfrak B_x:=B(x,2\mathfrak d(x))$ the following holds:
\begin{enumerate}[ii]
\item[(A1)] If $y\in \mathfrak B_x$ then $\inj(y)\geq 2\mathfrak d(x)$, $\exp{y}^{-1}:\mathfrak B_x\to T_yM$
is a diffeomorphism onto its image, and
$\tfrac{1}{2}(d(x,y)+\|v-P_{y,x}w\|)\leq \Sas(v,w)\leq 2(d(x,y)+\|v-P_{y,x} w\|)$ for all $y\in \mathfrak B_x$ and
$v\in T_xM,w\in T_yM$ s.t. $\|v\|,\|w\|\leq 2\mathfrak d(x)$, where 	
$P_{y,x}:=P_\gamma$ is the parallel transport along the length minimizing geodesic $\gamma$ joining $y$ to $x$.
\item[(A2)] If $y_1,y_2\in \mathfrak B_x$ then
$d(\exp{y_1}v_1,\exp{y_2}v_2)\leq 2\Sas(v_1,v_2)$ for $\|v_1\|$, $\|v_2\|\leq 2\mathfrak d(x)$,
and $\Sas(\exp{y_1}^{-1}z_1,\exp{y_2}^{-1}z_2)\leq 2[d(y_1,y_2)+d(z_1,z_2)]$
for $z_1,z_2\in \mathfrak B_x$ whenever the expression makes sense.
In particular $\|d(\exp{x})_v\|\leq 2$ for $\|v\|\leq 2\mathfrak d(x)$,
and $\|d(\exp{x}^{-1})_y\|\leq 2$ for $y\in \mathfrak B_x$.
\end{enumerate}

\medskip
The next two assumptions are on the regularity of the derivative $d\exp{x}$.
For $x,x'\in\ M\backslash\mathfs D$, let $\mathfs L _{x,x'}:=\{A:T_xM\to T_{x'}M:A\text{ is linear}\}$
and $\mathfs L _x:=\mathfs L_{x,x}$. 
Given $y\in \mathfrak B_x,z\in \mathfrak B_{x'}$ and $A\in \mathfs L_{y,z}$,
let $\widetilde{A}\in\mathfs L_{x,x'}$, $\widetilde{A}:=P_{z,x'} \circ A\circ P_{x,y}$.
The norm $\|\widetilde{A}\|$ does not depend on the choice of $x,x'$.
If $A_i\in\mathfs L_{y_i,z_i}$ then $\|\widetilde{A_1}-\widetilde{A_2}\|$ does
depend on the choice of $x,x'$, but if we change the basepoints $x,x'$ to $w,w'$ then
the respective differences differ by precompositions and postocompositions
with norm of the order of the areas of the geodesic triangles formed by $x,w,y_i$
and by $x',w',z_i$, which will be negligible to our estimates.
Define the map $\tau=\tau_x:\mathfrak B_x\times \mathfrak B_x\to \mathfs L_x$
by $\tau(y,z)=\widetilde{d(\exp{y}^{-1})_z}$, where we use the identification
$T_v(T_{y}M)\cong T_{y}M$ for all $v\in T_yM$.

\medskip
\noindent
{\sc Regularity of $d\exp{x}$:}
\begin{enumerate}[ii]
\item[(A3)] If $y_1,y_2\in \mathfrak B_x$ then
$
\|\widetilde{d(\exp{y_1})_{v_1}}-\widetilde{d(\exp{y_2})_{v_2}}\|
\leq d(x,\mathfs D)^{-a}\Sas(v_1,v_2)
$
for all $\|v_1\|,\|v_2\|\leq 2\mathfrak d(x)$, and 
$\|\tau(y_1,z_1)-\tau(y_2,z_2)\|\leq d(x,\mathfs D)^{-a}[d(y_1,y_2)+d(z_1,z_2)]$
for all $z_1,z_2\in \mathfrak B_x$.
\item[(A4)] If $y_1,y_2\in \mathfrak B_x$ then the map 
$\tau(y_1,\cdot)-\tau(y_2,\cdot):\mathfrak B_x\to \mathfs L_x$
has Lipschitz constant $\leq d(x,\mathfs D)^{-a}d(y_1,y_2)$.
\end{enumerate}

\medskip
Now we discuss the assumptions on the map. Consider a map
$f:M\backslash\mathfs D\to M$. 
We assume that $f$ is differentiable at every point $x\in M\backslash\mathfs D$, and we let
$\mathfs C=\{x\in M\backslash\mathfs D: df_x\text{ is not invertible}\}$ be the {\em critical set} of $f$.
We assume that $\mathfs C$ is a closed subset of $M$.

\medskip
\noindent
{\sc Singular set $\mathfs S$:} The {\em singular set of $f$} is
$\mathfs S:=\mathfs C\cup \mathfs D$. 

\medskip
The singular set $\mathfs S$ is closed. We assume that $f$ satisfies the following properties.

\medskip
\noindent
{\sc Regularity of $f$:} \label{regularity-f}
$\exists \beta>0,\mathfrak K>1$ s.t. for all $x\in M$ with 
$x,f(x)\notin\mathfs S$ there is $\min\{d(x,\mathfs S)^a,$ $d(f(x),\mathfs S)^a\}<\mathfrak r(x)<1$
s.t. for $D_x:=B(x,2\mathfrak r(x))$ and $E_x:=B(f(x),2\mathfrak r(x))$ the following holds:
\begin{enumerate}[.......]
\item[(A5)] The restriction of $f$ to $D_x$ is a diffeomorphism onto its image;
the inverse branch of $f$ taking $f(x)$ to $x$ is a well-defined diffeomorphism from 
$E_x$ onto its image.
%
\item[(A6)] For all $y\in D_x$ it holds $d(x,\mathfs S)^a\leq \|df_y\|\leq d(x,\mathfs S)^{-a}$; for
all $z\in E_x$ it holds $d(x,\mathfs S)^a\leq \|dg_z\|\leq d(x,\mathfs S)^{-a}$,
where $g$ is the inverse branch of $f$ taking $f(x)$ to $x$.
\item[(A7)] For all $y,z\in D_x$ it holds $\|\wt{df_y}-\wt{df_z}\|\leq\mathfrak Kd(y,z)^\beta$;
for all $y,z\in E_x$ it holds $\|\wt{dg_y}-\wt{dg_z}\|\leq\mathfrak Kd(y,z)^\beta$,
where $g$ is the inverse branch of $f$ taking $f(x)$ to $x$.
\end{enumerate}

\medskip
Although technical, conditions (A5)--(A7) hold in most cases of interest, e.g.
if $\|df^{\pm 1}\|,\|d^2f^{\pm 1}\|$ grow at most polynomially fast with respect to
the distance to $\mathfs S$.
In the sequel, we let $f_x^{-1}:E_x\to f_x^{-1}(E_x)$ denote the inverse branch of $f$
taking $f(x)$ to $x$. This notation is used in property (C4) of Theorem \ref{Thm-ALP}.

Finally, we define the measures that we are able to code.

\medskip
\noindent
{\sc Adapted measure:} An $f$--invariant probability
measure $\mu$ on $M$ is called {\em adapted}
if $\log d(x,\mathfs S)\in L^1(\mu)$. A fortiori, $\mu(\mathfs S)=0$.

\medskip
Due to assumption (A6), if $\mu$ is $f$--adapted then the conditions of the
non-invertible version of the Oseledets theorem are satisfied. Therefore, if $\mu$ is $f$--adapted
then for $\mu$--a.e. $x\in M$ the Lyapunov exponent
$\chi(x,v)=\lim_{n\to+\infty}\tfrac{1}{n}\log\|df^n_x(v)\|$ exists for all $v\in T_xM$.
Among the adapted measures, we consider the hyperbolic ones. Fix $\chi>0$.

\medskip
\noindent
{\sc $\chi$--hyperbolic:} An $f$--invariant probability measure $\mu$ is called
{\em $\chi$--hyperbolic} if for $\mu$--a.e. $x\in M$ we have $|\chi(x,v)|>\chi$
for all $v\in T_xM$.

\medskip
In particular, an ergodic measure is hyperbolic in the classical sense if and only if it is 
$\chi$--hyperbolic for some $\chi>0$.
We can similarly define adaptedness and $\chi$--hyperbolicity
for a $\wh f$--invariant probability measure $\wh\mu$. Via the projection/lift bijection
explained in Section \ref{Section-natural-extension}, it is clear that 
$\wh\mu$ is adapted/$\chi$--hyperbolic if and only if $\mu$ is adapted/$\chi$--hyperbolic.
Note that in the statement of Theorem \ref{Thm-ALP} we use these notions both
for $f$ and $\wh f$.

\subsection{Proof of Theorem \ref{Thm-ALP}}\label{ss.proof.ALP}

In order to prove Theorem \ref{Thm-ALP}, we recast the main objects used in the proof
of \cite[Main Theorem]{ALP-23}. Define
$$
\hSing= \bigcup_{n\in\Z}{\wh f}^n(\vt^{-1}[\mathfs S]).
$$
Applying the construction in Section \ref{Section-natural-extension},
let $(\wh{df}^{(n)}_{\wh x})_{n\in\Z}$ be an invertible cocycle defined for
$\wh x=(x_n)_{n\in\Z}\in \wh M\backslash \hSing$ by
$$
\wh{df}^{(n)}_{\wh x}:=
\left\{
\begin{array}{ll}
df^{n}_{x_0}&,\text{ if }n\geq 0\\
(df_{x_{-n}})^{-1}\cdots (df_{x_{-2}})^{-1}(df_{x_{-1}})^{-1}&,\text{ if }n\leq 0.
\end{array}
\right.
$$
As in \cite{BCS-Annals}, we divide the discussion into three steps.

\medskip
\noindent
{\em Step 1: The non-uniformly hyperbolic locus $\nuh^\#$.}

\medskip
We first define a ``weak'' nonuniformly hyperbolic locus ${\rm NUH}$ as follows.

\medskip
\noindent
{\sc The set ${\rm NUH}$:} It is defined as the set of points \label{Def-NUH}
$\wh x\in \wh M\backslash \hSing$ for which there is a splitting
$\wh{TM}_{\wh x}=E^s_{\wh x}\oplus E^u_{\wh x}$ such that:
\begin{enumerate}[(NUH1)]
\item Every $v\in E^s_{\wh x}\backslash\{0\}$ contracts in the future at least $-\chi$ and expands in the past:
$$\limsup_{n\to+\infty}\tfrac{1}{n}\log \|\wh{df}^{(n)} v\|\leq -\chi\ \text{ and } 
\ \liminf_{n\to+\infty}\tfrac{1}{n}\log \|\wh{df}^{(-n)} v\|>0.
$$ 
\item Every $v\in E^u_{\wh x}\backslash\{0\}$ contracts in the past at least $-\chi$ and expands in the future:
$$\limsup_{n\to+\infty}\tfrac{1}{n}\log \|\wh{df}^{(-n)} v\|\leq -\chi\text{ and } 
\liminf_{n\to+\infty}\tfrac{1}{n}\log \|\wh{df}^{(n)}v\|>0.$$
\item \label{Def-NUH3} The parameters $s(\wh x)=\sup\limits_{v\in E^s_{\wh x}\atop{\|v\|=1}}S(\wh x,v)$
and $u(\wh x)=\sup\limits_{w\in E^u_{\wh x}\atop{\|w\|=1}}U(\wh x,w)$ are finite, where:
\begin{align*}
S(\wh x,v)&=\sqrt{2}\left(\sum_{n\geq 0}e^{2n\chi}\|\wh{df}^{(n)} v\|^2\right)^{1/2},\\
U(\wh x,w)&=\sqrt{2}\left(\sum_{n\geq 0}e^{2n\chi}\|\wh{df}^{(-n)}w\|^2\right)^{1/2}.
\end{align*}
\end{enumerate}

For each $\wh x\in{\rm NUH}$, one defines a parameter $Q(\wh x)$ in terms
of the values $S(\wh x,\cdot)$, $U(\wh x,\cdot)$ and
$d(\vt_n[\wh x],\mathfs S)$, $n\in\Z$, see \cite[Section~3.2]{ALP-23} for the precise definition. 
The parameter $Q(\wh x)$ gives the size of a neighborhood of $\vt[\wh x]$ in which the map
$f$ can be represented, in a new system of coordinates (Pesin charts),
as a small peturbation of a hyperbolic matrix.
Let $\delta_\ve:=e^{-\ve n}<\ve$ for some $n>0$.

\medskip
\noindent
{\sc Parameter $q(\wh x)$:} For $\wh x\in{\rm NUH}$, let
$q(\wh x):=\delta_\ve\min\{e^{\ve|n|}Q(\wh f^n(\wh x)):n\in\Z\}$.

\medskip
\noindent
{\sc The non-uniformly hyperbolic locus ${\rm NUH}^\#$:} It is the set of $\wh x\in{\rm NUH}$ such
that $q(\wh x)>0$ and
$$
\limsup_{n\to+\infty}q(\wh f^n(\wh x))>0\text{ and }\limsup_{n\to-\infty}q(\wh f^n(\wh x))>0.
$$

\noindent
{\em Step 2: A first coding $\pi:\Sigma\to\wh M$.}

\medskip
Introduce {\em $\ve$--double charts} $\Psi_{\wh x}^{p^s,p^u}$, which consist
of a pair of Pesin charts both centered at $\wh x$ but with different sizes $p^s$ and $p^u$.
Let $v=\Psi_{\wh x}^{p^s,p^u}$ and $w=\Psi_{\wh y}^{q^s,q^u}$ be $\ve$--double charts.
Draw an edge $v\overset{\ve}{\rightarrow}w$ when some nearest neighbor conditions are satisfied.
These conditions, called (GPO1) and (GPO2) in \cite{ALP-23},
allow to define
a {\em stable graph transform} from graphs near $\wh y$ with size $q^s$ that are almost 
parallel to $E^s_{\wh y}$ to graphs near $\wh x$ with size $p^s$ that are almost parallel to $E^s_{\wh x}$;
and a {\em unstable graph transform} from graphs near $\wh y$ with size $p^u$
that are almost parallel to $E^u_{\wh x}$ to graphs near $\wh y$ with size $q^u$
that are almost parallel to $E^u_{\wh y}$. This allows to associate to each
sequence $\un v=(v_n)$ with $v_n\overset{\ve}{\rightarrow}v_{n+1}$ for every $n\in\Z$
a point $\wh x\in\wh M$ which is the unique point shadowed by $\un v$.

Construct a countable family $\mathfs A$ of $\ve$--double charts such that
for all $t>0$, the set $\{\Psi_{\wh x}^{p^s,p^u}\in\mathfs A:p^s,p^u>t\}$ is finite,
and every $\wh x\in {\rm NUH}^\#$ is shadowed by some regular sequence 
$\un v=\{\Psi_{\wh x_n}^{p^s_n,p^u_n}\}\in{\mathfs A}^{\Z}$ with $p^s_n\wedge p^u_n\approx q(\wh f^n(\wh x))$.
Let $\Sigma$ be the TMS defined by the graph with vertices $V=\mathfs A$ and edges 
$E=\{v\overset{\ve}{\rightarrow}w:v,w\in\mathfs A\}$.

\medskip
\noindent
{\sc First coding $\pi:\Sigma\to\wh M$:} The map $\pi:\Sigma\to\wh M$ where
$\pi(\un v)$ is the unique $\wh x\in\wh M$ shadowed by $\un v$.

\medskip
We have $\pi[\Sigma^\#]\supset\nuh^\#$, thus we get a cover of 
$\mathfs Z=\{Z(v):v\in V\}$ of $\nuh^\#$ where $Z(v)=\{\un v\in\Sigma^\#:v_0=v\}$.
This cover is locally finite: for every $Z\in \mathfs Z$, it holds $\#\{Z'\in\mathfs Z:Z\cap Z'\neq\emptyset\}<\infty$.

\medskip
\noindent
{\em Step 3: The second coding $\wh\pi:\whS\to\wh M$.}

\medskip
Applying a Bowen-Sina{\u\i} refinement to $\mathfs Z$, obtain a Markov partition $\mathfs R$
of $\nuh^\#$ that is locally finite with respect to $\mathfs Z$: for every $R\in\mathfs R$,
$\#{Z\in\mathfs Z: Z\supset R}<\infty$; for every $Z\in\mathfs Z$, $\#\{R\in\mathfs R:R\subset Z\}<\infty$.
Let $\whS$ be the TMS with vertices $\wh V=\mathfs R$ and edges 
$\wh E=\{R\to S:R,S\in \mathfs R\text{ such that }\wh f(R)\cap S\neq\emptyset\}$.

\medskip
\noindent
{\sc Second coding $\wh\pi:\whS\to\wh M$:} The map $\wh\pi:\whS\to\wh M$ where
$$
\wh\pi(\un R)=\bigcap_{n\geq 0}\overline{\wh f^n(R_{-n})\cap\cdots\cap f^{-n}(R_n)}.
$$

\medskip
Now we show how to obtain the properties listed in Theorem \ref{Thm-ALP}.

\medskip
\noindent
{\bf Property (C1) and (C5).} These conditions are proved in \cite{ALP-23}.
The symmetric binary relation is called {\em affiliation}, first
introduced in \cite{Sarig-JAMS}: call $R,S\in\wh V$ affiliated and write $R\sim S$
when there are $Z,Z'\in\mathfs Z$ such that $R\subset Z$, $S\subset Z'$ and $Z\cap Z'\neq\emptyset$.
For $R\in\mathfs R$, define
$N(R):=\{(S,w):\mathfs R\times \mathfs A: R\sim S\text{ and }Z(w)\supset S\}$, a finite number
by the local finiteness.
Then property (C1) is \cite[Theorem 7.6(3)]{ALP-23} with $N(R,S)=N(R)N(S)$, and property
(C5) is \cite[Lemma 7.5]{ALP-23}. 

\medskip
\noindent
{\bf Property ($\wh{\textrm C}$2).} By \cite[Main Theorem]{ALP-23}, we have
$\wh\pi(\whS)=\nuh^\#$. Let $\mu$ be adapted and $\chi$--hyperbolic.
By \cite[Lemma 3.6]{ALP-23}), we have $\wh\mu[\wh\pi(\whS^\#)]=\wh\mu[\nuh^\#]=1$.
Now, using (C1), we can lift $\wh\mu$ to 
$$
\nu=\int_{\wh M}\left(\frac{1}{\#(\wh\pi^{-1}(\wh x)\cap\whS^\#)}\sum_{\un R\in\wh\pi^{-1}(\wh x)\cap\whS^\#}\delta_{\un R}\right)d\wh\mu(\wh x),
$$ 
which satisfies part (a).

Now we prove part (b). Let $\nu$ be $\whs$--invariant, then $\nu(\whS^\#)=1$.
Again by \cite[Main Theorem]{ALP-23}, it follows that $\wh\mu=\nu\circ \wh\pi^{-1}$ 
satisfies $\wh\mu[\nuh^\#]=1$ and so it is hyperbolic. Finally, $h_{\wh f}(\wh\mu)=h_\nu(\whs)$
by the Abramov-Rohklin formula, since $\wh \pi\restriction_{\whS^\#}:\whS^\#\to \nuh^\#$ is finite-to-one. 

\medskip
\noindent
{\bf Property (C3).} Parts (i) and (ii) are proved in \cite[Prop. 4.11(1)]{ALP-23}.
The Hölder regularity of $E^{s/u}$ is \cite[Prop. 7.7]{ALP-23}.

\medskip
\noindent
{\bf Property (C4).} This is proved in \cite[Prop. 4.9(4)]{ALP-23}. 

\medskip
\noindent
{\bf Property (C6).} As stated in the proof of ($\wh{\textrm C}$2) above, we have 
$\wh\mu[\nuh^\#]=1$ and so $\wh\mu$ is $\chi'$--hyperbolic for every $0<\chi'<\chi$.

\medskip
\noindent
{\bf Property (C7).} This is \cite[Prop. 3.7]{BCS-Annals}. Its proof only
requires that $\wh\pi$ is continuous with $\wh\pi\circ\whs=\wh f\circ\wh\pi$ and property (C3).

\medskip
\noindent
{\bf Property (C8).} This follows from the assumption and (C5), as proved
in \cite[Proposition 3.8]{BCS-Annals}.

\medskip
\noindent
{\bf Property ($\wh{\textbf C}$9).} The proof is an adaptation of 
\cite[Prop. 3.9]{BCS-Annals}. Let $K\subset\wh M$ be transitive $\wh f$--invariant
compact and $\chi$--hyperbolic. 

\medskip
\noindent
{\sc Step 1:} There is $X_0\subset \wh\Sigma$ compact such that $\wh\pi(X_0)\supset K$.

\begin{proof}[Proof of Step $1$]
Each $\wh x\in\nuh^\#$ has a canonical coding $\un R(\wh x)=\{R_n(\wh x)\}_{n\in\Z}$
where $R_n(\wh x)$ is the unique rectangle of $\mathfs R$ containing $\wh f^n(\wh x)$.
Since $K$ is compact and $\chi$--hyperbolic, $\inf_{\wh x\in K}q(x)>0$
and so $K$ intersects finitely many rectangles of $\mathfs R$. Hence there is a finite
set $V_0\subset\mathfs R$ such that $R_0(\wh x)\in V_0$ for all $\wh x\in K$.
By invariance, the same holds for all $n\in\Z$, i.e. 
$R_n(\wh x)\in V_0$ for all $\wh x\in K$. Therefore 
the subshift $X_0$ induced by $V_0$, which is compact since $V_0$ is finite,
satisfies $\wh\pi(X_0)\supset K$. 
\end{proof}

\medskip
\noindent
{\sc Step 2:} There is $X\subset X_0$ transitive such that $\wh\pi(X)=K$. 

\begin{proof}[Proof of Step $2$] Among all compact $X\subset X_0$ with $\wh\pi(X)\supset K$,
consider a minimal set for the inclusion, which exists by Zorn's lemma. We claim that such $X$ satisfies
Step 2. To see that, let $\wh x\in K$ whose forward orbit is dense in $K$, and let
$\un R\in X$ be a lift of $\wh x$. We claim that the forward orbit of $\un R$ is dense in $X$.
Indeed, since $\wh\pi$ is continuous, we have
$$
\wh\pi\left(\omega(\whs,\un R)\right)=\omega(\wh f,\wh x)=K.
$$
Since $\omega(\whs,\un R)\subset X$ and $X$ is minimal for the inclusion, it follows that $X=\omega(\whs,\un R)$,
so $X$ is transitive and the above equality gives that $\wh\pi(X)=K$.
\end{proof}

This concludes the proof of Theorem \ref{Thm-ALP}.

\section{Symbolic dynamics of homoclinic classes of measures}\label{Section-homoclinic}

In this section, we prove Theorem \ref{thm.criterion}. For that, we prove
a version of Theorem \ref{Thm-ALP} for homoclinic classes of measures, which is Theorem \ref{Thm-Main} below.  To state this theorem, we first need to introduce the notion of homoclinic relations for hyperbolic measures
and obtain general properties of this relation.

\subsection{Invariant manifolds and invariant sets}\label{ss.invariant.sets}

Following Section 4.5 of \cite {ALP-23}, to each $\wh x\in \nuh^\#$ one associates
a {\em local stable manifold} $W^s(\wh x)\subset M$ 
and a {\em local unstable manifold} $W^u(\wh x)\subset M$. \footnote{In \cite{ALP-23},
each $\ve$--generalized pseudo-orbit $\un v$ is associated to local stable/unstable manifolds
$V^{s/u}[\un v]$. This defines, in particular, the local stable/unstable manifolds of every
$\wh x\in\nuh^\#$.} These sets are constructed away from the singular set $\mathfs S$.
Also, $W^s(\wh x)=W^s(x_0)$
only depends on $x_0$, while $W^u(\wh y)$ is defined in terms of inverse branches.
Section 4.6 of \cite{ALP-23} also defines {\em local invariant sets}, which are subsets of $\wh M$,
as follows.

\medskip
\noindent
{\sc Local invariant sets $V^{s/u}(\un x)$:} The {\em local stable set} of $\wh x\in\nuh^\#$
is defined by 
$$
V^s(\wh x)=\{\wh y\in\wh M:y_0\in W^s(\wh x)\},
$$
and its {\em local unstable set} is defined by
$$
V^u(\wh x)=\{\wh y\in\wh M:y_0\in W^u(\wh x)\text{ and }y_{-n}=f^{-1}_{x_{-n}}(y_{-n+1})\text{ for all }n\geq 1\}.
$$

\medskip
Recall that $f^{-1}_{x_{-n}}$ is the inverse branch of $f$ such that $f^{-1}_{x_{-n}}(x_{-n+1})=x_{-n}$,
see its definition in page \pageref{regularity-f}.
Alternatively, by the invariance of $W^{s/u}$, see \cite[Prop. 4.7(2)]{ALP-23}, we have 
\begin{align*}
V^s(\wh x)&=\{\wh y\in\wh M:y_n\in W^s(\wh f^n(\wh x)),\forall n\geq 0\}=
\{\wh y\in\wh M:y_n\in W^s(x_n),\forall n\geq 0\}\\
V^u(\wh x)&=\{\wh y\in\wh M:y_n\in W^u(\wh f^n(\wh x)),\forall n\leq 0\}.
\end{align*}
We introduce global versions of $V^{s/u}(\wh x)$, following the analogy of 
diffeomorphisms.

\medskip
\noindent
{\sc Global invariant sets $\mathfs V^{s/u}(\wh x)$:} The {\em global stable set} of $\wh x\in\nuh^\#$ is defined by
$$
\mathfs V^s(\wh x)=\bigcup_{n\geq 0}\wh f^{-n}[V^s(\wh f^n(\wh x))]=\{\wh y\in\wh M: \exists n\geq 0\text{ s.t. }
y_n\in W^s(x_n)\},
$$
and its {\em global unstable set} is defined by
$$
\mathfs V^u(\wh x)=\bigcup_{n\geq 0}\wh f^{n}[V^u(\wh f^{-n}(\wh x))]=\{\wh y\in\wh M:\exists n\leq 0\text{ s.t. }
y_m\in W^u(\wh f^m(\wh x)),\forall m\leq n\}.
$$
It is clear from the alternative characterization of $V^{s/u}$ given above that the sets in the unions defining $\mathfs V^{s/u}(\wh x)$ form increasing families,
e.g. $\wh f^{-n}[V^s(\wh f^n(\wh x))]\subset \wh f^{-m}[V^s(\wh f^m(\wh x))]$ for all $n\leq m$.
The next lemma translates intersection between global invariant sets in terms of local 
invariant sets.

\begin{lemma}\label{lemma.intersection}
Let $\wh x,\wh y\in\nuh^\#$. Then $[\mathfs V^u(\wh x)\cap \mathfs V^s(\wh y)] \setminus \wh{\mathfs S}\neq\emptyset$
if and only if there are $m\leq 0\leq n$ and $z_m\in W^u(\wh f^m(\wh x))$ such that:
\begin{enumerate}[{\rm (1)}]
\item $f^{n-m}(z_m)\in f^{n-m}[W^u(\wh f^m(\wh x))]\cap W^s(\wh f^n(\wh y))$;
\item $f^j(z_m)\notin \mathfs S$ for all $0<j<n-m$.
\end{enumerate}
In particular, $\mathfs V^u(\wh x)$ intersects $\bigcup_{\ell\in\Z}\wh f^\ell(\mathfs V^s(\wh y))$
in a point not belonging to $\wh{\mathfs S}$ if and only if there are $m,n\in\mathbb Z$, $k\geq 0$ 
and $z\in W^u(\wh f^m(\wh x))$ such that 
$f^k(z)\in f^k[W^u(\wh f^m(\wh x))]\cap W^s(\wh f^n(\wh y))$ and $f^j(z)\notin\mathfs S$ for
$0<j<k$.
\end{lemma}

\begin{proof}
Start assuming that there is
$\wh z\in \mathfs V^u(\wh x)\cap \mathfs V^s(\wh y) \setminus \wh{\mathfs S}$.
By definition, there are $m\leq 0\leq n$
such that $\wh f^m(\wh z)\in V^u(\wh f^m(\wh x))$ and $\wh f^n(\wh z)\in V^s(\wh f^n(\wh y))$.
In particular, $z_m\in W^u((\wh f^m(\wh x))$ and $z_n\in W^s(\wh f^n(\wh y))$. Since 
$f^{n-m}(z_m)=z_n$, it follows that $z_n\in f^{n-m}[W^u(\wh f^m(\wh x))]\cap W^s(\wh f^n(\wh y))$,
thus proving (1). Also, since $\wh z\not\in \wh{\mathfs S}$, condition (2) holds.

Now assume that $m\leq 0\leq n$ and $z_m\in W^u(\wh f^m(\wh x))$ with $f^{n-m}(z_m)\in W^s(\wh f^n(\wh y))$
and $f^j(z_m)\notin \mathfs S$ for all $0<j<n-m$. 
Define $\wh z=(z_k)$ by 
$$
z_k=\left\{
\begin{array}{ll}
(f_{x_k}^{-1}\circ \cdots\circ f_{x_{m-1}}^{-1})(z_m)& \text{, if }k< m\\
&\\
f^{k-m}(z_m)& \text{, if }k\geq m.
\end{array}
\right.
$$
Observe that $z_k$ is well-defined:
\begin{enumerate}[$\circ$]
\item $z_m\in W^u(\hf^m(\hx))$, hence the composition
$(f_{x_k}^{-1}\circ \cdots\circ f_{x_{m-1}}^{-1})(z_m)$ is well-defined and does not belong to $\mathfs S$
for all $k<m$;
\item $z_k=f^{k-m}(z_m)\notin \mathfs S$ for all $m\leq k<n$ by hypothesis.
\item $z_n\in W^s(\hf^n(\hx))$, hence $z_k=f^{k-n}(z_n)$ is well-defined and does not belong to $\mathfs S$
for all $k\geq n$. 
\end{enumerate}
It is clear that $\wh z\in \mathfs V^u(\wh x)\cap \mathfs V^s(\wh y) \setminus \wh{\mathfs S}$.
This completes the proof.
\end{proof}

\medskip
\noindent
{\sc Transversality of global invariant sets:} We say that $\mathfs V^u(\wh x)$ and $\mathfs V^s(\wh y)$
are {\em transversal}, and write $\mathfs V^u(\wh x)\pitchfork\mathfs V^s(\wh y)\neq\emptyset$,
if there are $m\leq 0\leq n$ and $z_m\in W^u(f^m(\hx))$ such that $f^{n-m}(z_m)\in f^{n-m}[W^u(\wh f^m(\wh x))]\pitchfork W^s(\wh f^n(\wh y))$ and $f^j(z_m)\notin \mathfs{S}$ for all $0<j<n-m$.

\medskip
When this happens, for the element $\wh z$ given by Lemma \ref{lemma.intersection} we will write 
that $\wh z\in \mathfs V^u(\wh x)\pitchfork\mathfs V^s(\wh y)$.
Note that $\wh z$ belongs to $\hf^{-m}(V^u(\hf^m(\hx)))\cap \hf^{-n}(V^s(\hf^n(\hy)))$, 
so we also write $\mathfs V^u(\wh x)\pitchfork\mathfs V^s(\wh y)\neq\emptyset$ 
in terms of local invariant sets by
$\hf^{-m}(V^u(\hf^m(\hx)))\pitchfork \hf^{-n}(V^s(\hf^n(\hy)))\neq\emptyset$ for some (any) $m\leq 0\leq n$.

As usual, a periodic orbit $\mathcal O$ of period $n$ for $\hf$ is called {\em hyperbolic} if
$df^n_{\vt[\wh x]}=\wh{df}^{(n)}_{\wh x}$ is a hyperbolic matrix for $\wh x\in\mathcal O$.
In this case, we define
$\mathfs V^{s/u}(\mathcal O)=\bigcup_{\hx\in\mathcal O}\mathfs V^{s/u}(\hx)$ denote
the global stable/unstable set of $\mathcal O$. 

\medskip
\noindent
{\sc Homoclinic class of hyperbolic periodic orbit:} The {\em homoclinic class}
of a hyperbolic periodic orbit $\mathcal O$ is the set
$$
{\rm HC}(\mathcal O)=\overline{\mathfs V^u(\mathcal O)\pitchfork \mathfs V^s(\mathcal O)}.
$$

\subsection{Homoclinic relation of measures }\label{ss.hom.relation}

Let $\mathbb P_{\rm h}(\wh f)$ denote the set of $\wh f$--invariant ergodic hyperbolic probability
measures.

\medskip
\noindent
{\sc Partial order of measures:} Let $\mu_1,\mu_2\in\mathbb P_{\rm h}(\wh f)$.
We say that $\mu_1\preceq\mu_2$ if there are measurable sets $A_1,A_2\subset\wh M$
with $\mu_i(A_i)>0$ such that $\mathfs V^u(\wh x)\pitchfork \mathfs V^s(\wh y)\neq\emptyset$
for all $(\wh x,\wh y)\in A_1\times A_2$.
 
\medskip
Observe that $\mathfs V^{s/u}$ depends on the choice of $\chi$, but given
$\mu_1,\mu_2\in\mathbb P_{\rm h}(\wh f)$ we can always choose
$\chi$ small enough so that $\mu_1,\mu_2$ are both $\chi$--hyperbolic.

\medskip
\noindent
{\sc Homoclinic relation of measures:} Given $\mu_1,\mu_2\in\mathbb P_{\rm h}(\wh f)$,
we say that $\mu_1$ and $\mu_2$ are {\em homoclinically related}
if $\mu_1\preceq\mu_2$ and  $\mu_2\preceq\mu_1$. When this happens, we write $\mu_1 \stackrel{h}{\sim}\mu_2$.

\medskip
We also define homoclinic relation between a set and a measure.\\

\noindent
{\sc Homoclinic relation of a set and a measure:} Given a transitive set $K\subset \hM$ and
$\mu\in \mathbb P_h(\hf)$, we say that $K$ and $\mu$ are {\em homoclinically related} and write
$K \stackrel{h}{\sim}\mu$ when there exists $\nu\in \mathbb P_h(\hf|_K)$ such that $\nu \stackrel{h}{\sim}\mu$.

\medskip
Now we can state the version of Theorem \ref{Thm-ALP} for homoclinic classes of measures.

\begin{theorem}\label{Thm-Main}
Let $M$ be a smooth Riemannian manifold with finite diameter, $f$ a map on $M$,
and assume that $M,f$ satisfy assumptions {\rm (A1)--(A7)}. For every adapted, ergodic and hyperbolic
$f$--invariant probability measure $\mu$,
there is a locally compact countable topological Markov shift $(\Sigma,\sigma)$ and
a H\"older continuous map $\pi:\Sigma\to\wh M$ such that $\pi\circ\sigma=\wh f\circ\pi$,
satisfying properties {\em (C1)}, {\em (C3)--(C8)} and:
\begin{enumerate}[{\rm (C1)}]
\item[{\rm (C0)}] $\Sigma$ is irreducible.
\item[{\rm (C2)}] 
\begin{enumerate}[{\rm (a)}]
\item If $\nu$ is adapted, $\chi$--hyperbolic and homoclinically related to $\mu$
then $\wh\nu[\pi(\Sigma^\#)]=1$, and there exists $\eta$ a $\sigma$--invariant probability measure on $\Sigma$
such that $\wh\nu=\eta\circ\pi^{-1}$ and $h_{\eta}(\sigma)=h_\nu(f)$.
\item If $\eta$ is a $\sigma$--invariant probability measure on $\Sigma$ then
$\wh\nu=\eta\circ\pi^{-1}$ is hyperbolic and homoclinically related to $\wh\mu$ 
with $h_{\wh\nu}(\wh f)=h_{\eta}(\sigma)$.
\end{enumerate}
\item[{\rm (C9)}] If $K\subset\wh M$ is a transitive $\hf$--invariant compact $\chi$--hyperbolic set
that is homoclinically related to $\wh\mu$, then there is a transitive $\sigma$--invariant 
compact set $X\subset\Sigma$ such that $\pi(X)=K$.
\end{enumerate}
\end{theorem}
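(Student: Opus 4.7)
The plan is to apply Theorem \ref{Thm-ALP} and then restrict the resulting symbolic model to a well-chosen irreducible component determined by $\mu$. First I would obtain the coding $\wh\pi:\wh\Sigma \to \wh M$ from Theorem \ref{Thm-ALP} at exponent $\chi$. By property $(\wh{\rm C}2)$(a) applied to $\mu$, the measure $\wh\mu$ admits a preimage $\whs$--invariant measure on $\wh\Sigma$ of equal entropy; decomposing into ergodic components and using the ergodicity of $\mu$, I pick a single ergodic component $\eta_0$. Since $\eta_0$ is $\whs$--ergodic, it is concentrated on one irreducible component $\Sigma \subset \wh\Sigma$. Setting $\pi := \wh\pi|_\Sigma$, property (C0) holds by construction, while (C1), (C3)--(C8) follow verbatim from the corresponding items of Theorem \ref{Thm-ALP} since each is preserved by passage to a subshift (for (C8) one additionally uses that relative compactness is an intrinsic property in $\Sigma^\#$).

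The substantive content is property (C2). Part (b) is the easier direction: for any $\sigma$--invariant $\eta$ on $\Sigma$, property $(\wh{\rm C}2)$(b) of Theorem \ref{Thm-ALP} gives that $\wh\nu := \eta \circ \pi^{-1}$ is adapted and hyperbolic with matching entropy, so only the homoclinic relation with $\wh\mu$ needs verification. I would invoke Katok's closing lemma, which in this context follows from (C3)--(C5) and shadowing in irreducible TMS, to approximate both $\mu$ and $\wh\nu$ by hyperbolic periodic orbits whose canonical codes lie in $\Sigma$. Irreducibility of $\Sigma$ yields admissible paths between these codes, which by (C5) and the inclination lemma of the appendix translate to transverse heteroclinic intersections, giving the homoclinic relation. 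For part (a), starting from $\nu$ adapted, $\chi$--hyperbolic and homoclinically related to $\mu$, property $(\wh{\rm C}2)$(a) of Theorem \ref{Thm-ALP} produces a lift $\eta$ of $\wh\nu$ on $\wh\Sigma$. I would then choose hyperbolic periodic orbits $\mc O_\mu,\mc O_\nu$ shadowing generic points of $\mu,\nu$ in such a way that the homoclinic relation persists; the existence of transverse heteroclinic intersections between $\mc O_\mu$ and $\mc O_\nu$, combined with (C5) and local finiteness of the cover $\mathfs Z$ from the proof of Theorem \ref{Thm-ALP}, produces admissible paths in $\wh\Sigma$ between the periodic codes, forcing the code of $\mc O_\nu$ to lie in $\Sigma$. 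Since any $\eta$--generic point is itself homoclinically related to $\mc O_\nu$, a further admissible concatenation argument places its canonical code in $\Sigma$, so $\eta$ is supported in $\Sigma$.

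For (C9), property $(\wh{\rm C}9)$ of Theorem \ref{Thm-ALP} yields a transitive compact $X_0 \subset \wh\Sigma$ with $\wh\pi(X_0) = K$. Because $K$ is homoclinically related to $\wh\mu$, picking a hyperbolic periodic orbit on $K$ homoclinically related to $\mc O_\mu$ and realizing the heteroclinic intersections as admissible paths, exactly as in (C2)(a), forces the vertices appearing in $X_0$ to lie in $\Sigma$. Thus $X := X_0 \subset \Sigma$ is transitive, compact, $\whs$--invariant, and satisfies $\pi(X) = K$.

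The step I expect to be the main obstacle is the realization of transverse heteroclinic intersections as admissible paths in the TMS in the non-invertible setting with singularities. The inclination lemma supplied in the appendix provides the geometric input, but one must in addition control the parameter $q(\wh x)$ along the heteroclinic orbit so that the orbit remains in $\nuh^\#$ and the canonical Pesin chart sizes do not collapse near $\hSing$; this forces a careful choice of the approximating periodic orbits (both must be adapted, which is automatic since they are hyperbolic periodic orbits away from $\mathfs S$) and of the heteroclinic points within their local invariant manifolds. Once this symbolic-to-dynamical dictionary for homoclinic relations is set up, the remaining properties follow mechanically by restriction from Theorem \ref{Thm-ALP}.
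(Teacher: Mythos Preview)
Your overall strategy—apply Theorem~\ref{Thm-ALP} and restrict to an irreducible component—is correct, but the mechanism you propose for forcing homoclinically related objects into that component has a genuine gap. You want to convert a transverse heteroclinic intersection between $\mathcal O_\mu$ and $\mathcal O_\nu$ directly into an admissible path in $\whS$ connecting \emph{the specific} lift of $\mathcal O_\mu$ lying in $\Sigma$ to \emph{some} lift of $\mathcal O_\nu$. But by (C1) periodic orbits have several lifts, and even when the heteroclinic point is codable (i.e.\ $q$ stays bounded below along its orbit), its code only yields a path from \emph{some} lift of $\mathcal O_\mu$ to \emph{some} lift of $\mathcal O_\nu$; nothing in (C5) or local finiteness lets you control which lift you hit. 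Your acknowledged obstacle about $q(\hx)$ is real but secondary to this ``which lift'' problem. The same issue reappears in your treatment of (C9): the transitive compact $X_0$ produced by $(\wh{\rm C}9)$ lies in $\whS$, and there is no reason its vertices already belong to $\Sigma$.

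The paper bypasses this difficulty by a different route. Rather than translating heteroclinics to symbolic paths, it first builds for each $n$ a transitive compact $\chi$--hyperbolic horseshoe $K_n\subset\hM$ containing $\mathcal O_1,\ldots,\mathcal O_n$ \emph{together with} the heteroclinic orbits (uniform hyperbolicity plus classical shadowing), then applies $(\wh{\rm C}9)$ to obtain an irreducible component $\Sigma_n$ lifting $K_n$, and uses pigeonhole on the finitely many lifts of $\mathcal O_1$ to extract a single $\Sigma$ that lifts \emph{every} $\chi$--hyperbolic periodic orbit homoclinically related to $\mu$. For (C2)(a), one then approximates a recurrent generic point $\un R$ of a lift $\overline\nu$ by periodic codes $\un q^{(i)}$, observes the projections $\wh x^{(i)}$ are periodic orbits homoclinically related to $\mu$ (via Proposition~\ref{prop.equivalence.relation}), lifts the $\wh x^{(i)}$ to $\Sigma$ by the first step, and invokes (C8) to pass to the limit and produce a lift $\un p\in\Sigma^\#$ of $\wh x$—possibly different from $\un R$. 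For (C9) one repeats this argument for each point of $K$ and then redoes the minimal-$X$ step of $(\wh{\rm C}9)$ inside $\Sigma$. The horseshoe/pigeonhole device is exactly the missing idea that makes the lifting work without ever needing to match specific symbolic lifts through a single heteroclinic code.
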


Theorem \ref{Thm-Main} is a non-invertible version of \cite[Theorem 3.14]{BCS-Annals}.
The topological Markov shift $(\Sigma,\sigma)$ depends on $\mu$ and on $\chi$.
We emphasize that it is irreducible, a property that is important for applications
(the topological Markov shift obtained in Theorem \ref{Thm-ALP}
might not be irreducible).

The proof of Theorem \ref{Thm-Main} requires obtaining some basic properties
of homoclinic relations between hyperbolic measures, as follows.

\begin{proposition}\label{prop.equivalence.relation}
Let $\mu_1,\mu_2,\mu_3\in \mathbb P_{\rm h}(\widehat f)$.
If $\mu_1\preceq \mu_2$ and $\mu_2\preceq \mu_3$, then $\mu_1\preceq \mu_3$.
\end{proposition}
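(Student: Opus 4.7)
The plan is to use ergodicity of $\mu_2$ to produce a single $\mu_2$--generic orbit that witnesses both relations simultaneously, and then to concatenate the two resulting transversal intersections by means of the inclination lemma.

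First I would choose $\chi>0$ small enough that $\mu_1,\mu_2,\mu_3$ are all $\chi$--hyperbolic (so that the sets $\V^{s/u}$ are defined for orbits of all three measures), and let $(A_1,A_2)$ and $(B_2,B_3)$ be measurable sets witnessing $\mu_1\preceq\mu_2$ and $\mu_2\preceq\mu_3$ respectively. Since $\mu_2$ is ergodic with $\mu_2(B_2)>0$, the set $\bigcup_{n\geq 0}\hf^{-n}(B_2)$ has full $\mu_2$--measure, hence $A_2\cap\bigcup_{n\geq 0}\hf^{-n}(B_2)$ has positive $\mu_2$--measure. Fix $\hy$ in this intersection and an integer $n_0\geq 0$ with $\hy':=\hf^{n_0}(\hy)\in B_2$. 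Taking arbitrary $\hx\in A_1$ and $\hz\in B_3$, I will show that $\V^u(\hx)\pitchfork \V^s(\hz)\neq\emptyset$; since $\mu_1(A_1)>0$ and $\mu_3(B_3)>0$, this gives $\mu_1\preceq\mu_3$.

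The first hypothesis, applied to $(\hx,\hy)$, supplies integers $m\leq 0\leq n$ and a transversal intersection point in $f^{n-m}[W^u(\hf^m(\hx))]\pitchfork W^s(\hf^n(\hy))$. The second, applied to $(\hy',\hz)$, supplies $m'\leq 0\leq n'$ and a transversal intersection point $q\in f^{n'-m'}[W^u(\hf^{m'}(\hy'))]\pitchfork W^s(\hf^{n'}(\hz))$. By the inclination lemma from Appendix~\ref{appendix}, for all sufficiently large $N$, iterating a small $u$--disk of $f^{n-m}[W^u(\hf^m(\hx))]$ forward by $f^N$ produces a sub-disk that is $C^1$--close to a piece of $W^u(\hf^{n+N}(\hy))$ centered near $\hf^{n+N}(\hy)$; propagating by further iterates, these images accumulate in $C^1$ on arbitrarily large portions of $\V^u(\hy')$. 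Choosing $N$ so that the $C^1$--shadow covers a neighborhood of an appropriate forward iterate of $q$, and using that transversal intersection is a $C^1$--open condition, the sub-disk meets $W^s(\hf^{n''}(\hz))$ transversally for some $n''\geq 0$. Unwinding gives a point in $f^{n''-m}[W^u(\hf^m(\hx))]\pitchfork W^s(\hf^{n''}(\hz))$, hence $\V^u(\hx)\pitchfork \V^s(\hz)\neq\emptyset$.

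The main obstacle I expect is the careful bookkeeping needed to apply the inclination lemma in the non-invertible, singular setting: aligning the indices of the two witnesses under the shift by $n_0$, remaining in the regime where the inclination lemma applies (bounded away from $\mathfs S$ by adaptedness), and verifying that the $C^1$--closeness of iterated $u$--disks is preserved through enough forward iterates to reach the second transversal intersection. Fixing a common $\chi$ for all three measures from the outset is what makes the invariant manifolds and the inclination lemma behave uniformly throughout the argument.
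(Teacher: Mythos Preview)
Your overall strategy---fix a single $\mu_2$--typical $\hy$ witnessing both relations and chain the two transversalities via the inclination lemma---is the right idea, and it is essentially the content of Corollary~\ref{cor.trans}. However, the ``unwinding'' step is where the argument breaks. After the inclination-lemma iterations, what you actually obtain is an intersection of the form
\[
f^{K}\bigl[W^u(\hf^m(\hx))\bigr]\ \pitchfork\ W^s(\hf^{n'}(\hz))\neq\emptyset
\]
for some $K\geq 0$, $m\leq 0$, $n'\geq 0$, but the exponent $K$ produced by the $\lambda$--lemma iterations has no reason to equal $n'-m$. (If you try to adjust by replacing $m\to m-1$ or applying an extra $f$, the defect $K+m-n'$ is invariant.) So the careful bookkeeping you correctly anticipate as an obstacle does not merely require care---it fails: the conclusion you can justify is only $\V^u(\hx)\pitchfork \hf^{N(\hx,\hz)}(\V^s(\hz))\neq\emptyset$ for an integer $N(\hx,\hz)$ that depends on the pair. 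Since this shift varies over $A_1\times B_3$, you cannot absorb it into a single $\hf$--translate of $B_3$, and no product of positive-measure sets is produced. A secondary omission is that the inclination lemma (Proposition~\ref{lambda.u-lemma}) requires $\hy\in Y'$; you need to intersect with this full-measure set when choosing $\hy$.

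The paper's proof circumvents the floating-index problem by a different mechanism: it applies Corollary~\ref{cor.trans} to a \emph{single} triple $(\hx^{(1)},\hx^{(2)},\hx^{(3)})$ (chosen inside Pesin blocks, with suitable recurrence), obtaining a \emph{fixed} shift $n$; then it uses backward/forward recurrence of $\hx^{(1)},\hx^{(3)}$ to return to the Pesin blocks $A_1,A_3$, and the continuity of $W^{s/u}$ on Pesin blocks to propagate the (still $n$--shifted) transversality to positive-measure neighborhoods $B_1\times B_3$. Because $n$ is now uniform over $B_1\times B_3$, one simply replaces $B_3$ by $\hf^{n}(B_3)$ and invokes invariance of $\mu_3$ to conclude $\mu_1\preceq\mu_3$. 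Your direct approach cannot be repaired without introducing this continuity-on-Pesin-blocks step for at least one of the two outer measures.
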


The proof of the above proposition requires a version of the Inclination Lemma for points
with some recurrence. For diffeomorphisms, this was obtained in \cite[Lemma 2.7]{BCS-Annals}, 
where recurrence was stated in terms of Pesin blocks. We follow the same strategy:
we define Pesin blocks and then state the Inclination Lemma (Proposition \ref{lambda.u-lemma}),
whose proof is in Appendix \ref{appendix}.

Let $d$ be the dimension of $M$. Recall that $\chi,\ve>0$. Let $\ell\in\{0,1,\ldots,d\}$.

\medskip
\noindent
{\sc Pesin sets $\Lambda_{\chi,\ve,C,\ell}$ and $\Lambda_{\chi,\ve,C}$:} 
We denote $\Lambda_{\chi,\ve,C,\ell}$ as the set
of $\hx\in\hM\backslash\wh{\mathfs S}$ such that there is a $\wh{df}$--invariant decomposition 
$\wh{TM}_{\hf^k(\hx)}=E^s_{\hf^k(\hx)}\oplus E^u_{\hf^k(\hx)}$, $k\in\Z$, such that:
\begin{enumerate}[(PS1)]
\item[(PS1)] $\norm{\wh{df}^{(n)}|_{E^s_{\hf^k(\hx)}}}\leq Ce^{-n\chi+|k|\ve}$ and
$\norm{\wh{df}^{(-n)}|_{E^u_{\hf^k(\hx)}}}\leq Ce^{-n\chi+|k|\ve}$ for all $n\geq 0$ and $k\in\Z$.
\item[(PS2)] $\angle(E^s_{\hf^k(\hx)},E^u_{\hf^k(\hx)})\geq C^{-1}e^{|k|\ve}$ for all $k\in\Z$.
\item[(PS3)] $d(\vt_k[\hx],\mathfs S)\geq C^{-1}e^{|k|\ve}$ for all $k\in\Z$.
\item[(PS4)] ${\rm dim}(E^s_{\hf^k(\hx)})=\ell$ for all $k\in\Z$.
\end{enumerate}
We then define 
$$
\Lambda_{\chi,\ve,C}:=\bigcup_{\ell=0}^d \Lambda_{\chi,\ve,C,\ell}.
$$

\medskip
These sets were defined in \cite{Pesin-Izvestia-1976} for diffeomorphisms and in \cite{KSLP} for billiards.
Following these references, each $\Lambda_{\chi,\ve,C,\ell}$ is a compact set such that
the maps $\hx\in \Lambda_{\chi,\ve,C,\ell}\mapsto E^{s/u}_{\hx}$ are continuous. Furthermore,
there are local invariant submanifolds $W^{s/u}(\hx)$, $\hx\in\Lambda_{\chi,\ve,C,\ell}$,
which vary continuously. Hence, the same properties hold for $\Lambda_{\chi,\ve,C}$.

Another property of these sets is that for each $m\in \Z$, $\hf^m(\Lambda_{\chi,\ve,C}) \subset \Lambda_{\chi,\ve, Ce^{|m| \ve}}$. Fix a sequence $(\chi_n)$ that decreases to zero. For each $n$, choose $\ve_n>0$
small enough.


\medskip
\noindent
{\sc Pesin blocks $K_n$:} We define {\em Pesin blocks} $(K_n)$ by $K_n=\Lambda_{\chi_n,\ve_n,1/\chi_n}$.

\medskip
We actually choose $(\chi_n)$ converging to zero fast enough to assure that
$\hf^{-1}(K_n)\cup K_n\cup \hf(K_n)\subset K_{n+1}$. Therefore
$Y:=\bigcup K_n$ is $\hf$--invariant.
We finally define the following set.

\medskip
\noindent
{\sc The set $Y'$:} It is the set of $\hx\in Y$
for which there are sequences $n_k,m_k\to \infty$ such that $\hf^{n_k}(\hx),\hf^{-m_k}(\hx)$ both
belong to a same Pesin block and which converge to $\hx$.

\medskip
We are now ready to state the Inclination Lemma.

\begin{proposition}[Inclination Lemma]\label{lambda.u-lemma}
Let $\wh y\in Y'$, and let $\Delta\subset M$ be a disc of same dimension of $W^u(\hy)$.
If $\Delta$ is transverse to $W^s(\wh f^m(\wh y))$ for some $m\in\Z$, then 
there are discs $D_k\subset \Delta$ and $n_k\to \infty$ such that $f^{n_k}(D_k)$
converges to $W^u(\wh y)$ in the $C^1$ topology.
\end{proposition}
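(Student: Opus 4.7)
The plan is to adapt the Pesin $\lambda$-lemma to the non-invertible setting with singularities, using the recurrence of $\widehat y \in Y'$ back to a common Pesin block, in the spirit of \cite[Lemma~2.7]{BCS-Annals}.

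First, I would use the transversality assumption to pick a point $z_0 \in \Delta \cap W^s(\widehat f^m(\widehat y))$ at which $T_{z_0}\Delta + T_{z_0}W^s(\widehat f^m(\widehat y)) = T_{z_0}M$, and consider the forward orbit $z_n := f^n(z_0)$. Property (C4)(i) gives $d(z_n, \vartheta[\widehat f^{m+n}(\widehat y)]) \leq e^{-\chi n/2}$ for every $n\geq 0$, so this orbit stays in $M\setminus \mathfs S$ (using (PS3) along the orbit of $\widehat y$). By the definition of $Y'$, I fix a Pesin block $K_N$ and a sequence $n_k\to\infty$ with $\widehat f^{n_k}(\widehat y)\in K_N$ and $\widehat f^{n_k}(\widehat y)\to \widehat y$ in $\widehat M$.

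Next, I would shrink $\Delta$ to a disc $D_0\subset\Delta$ centered at $z_0$ whose tangent spaces lie in a fixed narrow unstable cone at each point. The invariance and contraction of the unstable cone field along the shadowing orbit follows from (PS1)--(PS2) on the fact that $\widehat f^n(\widehat y)\in K_{N+|n|}$ (by monotonicity of the Pesin blocks) combined with the derivative bounds (A6)--(A7). A classical graph-transform argument then produces, for each large $k$, a subdisc $D_k\subset D_0$ such that $f^{n_k-m}(D_k)$ is a graph over a neighborhood of $\vartheta[\widehat f^{n_k}(\widehat y)]$ in $E^u_{\widehat f^{n_k}(\widehat y)}$, with $C^1$ norm tending to zero as $k\to\infty$ and with domain of size uniformly bounded below in $k$ (here I use the uniformity of local unstable manifold sizes on $K_N$, together with the fact that $z_{n_k-m}\in W^s_{\rm loc}(\widehat f^{n_k}(\widehat y))$ for every $k$).

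Finally, by continuity of the Pesin data on $K_N$, one has $E^u_{\widehat f^{n_k}(\widehat y)}\to E^u_{\widehat y}$ and $W^u(\widehat f^{n_k}(\widehat y))\to W^u(\widehat y)$ in the $C^1$ topology as $k\to\infty$. Since $f^{n_k-m}(D_k)$ is a small graph over $E^u_{\widehat f^{n_k}(\widehat y)}$ that contains the point $z_{n_k-m}$, which lies in $W^s_{\rm loc}(\widehat f^{n_k}(\widehat y))\cap W^u_{\rm loc}(\widehat f^{n_k}(\widehat y))$ up to exponentially small error, it $C^1$-converges to the same limit, namely $W^u(\widehat y)$. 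Replacing $n_k$ by $n_k-m$ yields the desired sequence.

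The hard part will be verifying rigorously, in the presence of singularities, that the graph transform along the shadowing orbit produces discs of \emph{uniform} size with vanishing $C^1$ norm: this relies on (PS3) to keep the orbit at uniformly positive distance from $\mathfs S$ so that the local chart sizes $\mathfrak d,\mathfrak r$ stay uniform, on (A6)--(A7) to control cone rotation and graph-norm estimates across the singular background, and on the uniform continuity of the Pesin local invariant manifolds on $K_N$ to match $f^{n_k-m}(D_k)$ with $W^u(\widehat f^{n_k}(\widehat y))$ and then pass to the limit $W^u(\widehat y)$.
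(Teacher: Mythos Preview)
Your overall strategy is sound and would work, but the approach differs from the paper's in a key structural way and there is a genuine gap in the final convergence step.

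\textbf{The difference.} You use the \emph{forward} recurrence $\widehat f^{n_k}(\widehat y)\to\widehat y$ inside a Pesin block, push $\Delta$ forward along the orbit of $\widehat y$, and try to match $f^{n_k-m}(D_k)$ with $W^u(\widehat f^{n_k}(\widehat y))$, then invoke continuity $W^u(\widehat f^{n_k}(\widehat y))\to W^u(\widehat y)$. The paper instead uses the \emph{backward} recurrence $\widehat f^{-n_k}(\widehat y)\to\widehat y$: since $W^s(\widehat f^{-n_k}(\widehat y))\to W^s(\widehat y)$, the disc $\Delta$ is transverse to $W^s(\widehat f^{-n_k}(\widehat y))$ for each large $k$; after a fixed number $n_{k_0}$ of iterates, $f^{n_{k_0}}(\Delta)$ contains a $u$--admissible disc $\widetilde\Delta_0$ at the chart $\Psi_0$ of $\widehat y$. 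Because $\Psi_{-n_k}$ is close to $\Psi_0$, the same $\widetilde\Delta_0$ is $u$--admissible at $\Psi_{-n_k}$, and then applying the unstable graph transforms $\mathfs F_{-1}\circ\cdots\circ\mathfs F_{-n_k}$ to it yields discs that converge to $W^u(\widehat y)$ \emph{by the very definition of $W^u(\widehat y)$ as the limit of backward graph transforms}. This transplantation trick makes the limit identification immediate and avoids any separate appeal to continuity of $W^u$ on the block.

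\textbf{The gap.} In your last step you assert that $f^{n_k-m}(D_k)$, being ``a small graph over $E^u_{\widehat f^{n_k}(\widehat y)}$ that contains $z_{n_k-m}$'', $C^1$--converges to $W^u(\widehat y)$. This reasoning is insufficient: a graph with representing function of small $C^1$ norm in the Pesin chart and passing through a point near the origin is only close to the \emph{flat} subspace $E^u\times\{0\}$, not to $W^u(\widehat f^{n_k}(\widehat y))$, which itself is a graph whose representing function has $C^1$ norm bounded only by the admissibility constant (not tending to zero). What you actually need is that the unstable graph transform is a contraction on $u$--admissible manifolds (by a rate $\approx e^{-\chi/2}$, uniformly along the orbit, see \cite[Prop.~4.6]{ALP-23}): once $f^{N_0}(D_0)$ becomes $u$--admissible at $\Psi_{m+N_0}$, after $n_k-m-N_0$ further transforms its $C^1$--distance to $W^u(\widehat f^{n_k}(\widehat y))$ at $\Psi_{n_k}$ is $O(e^{-\chi(n_k-m-N_0)/2})$. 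This, together with the continuity of $W^u$ on $K_N$, does give $f^{n_k-m}(D_k)\to W^u(\widehat y)$. So your route can be completed, but the argument you wrote skips exactly the mechanism that produces $C^1$ convergence. The paper also carries out explicitly (its ``Claim'') the derivative computation showing the tangent ratio $\|u_{n_k}\|/\|v_{n_k}\|\to 0$, needed to obtain the first $u$--admissible disc; you acknowledge this as the hard part but leave it as a black box.
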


The proof is in Appendix \ref{appendix}.

\begin{corollary}\label{cor.trans}
Let $\wh x,\wh y,\wh z\in Y'$. If $\mathfs V^u(\wh x)\pitchfork \mathfs V^s(\wh y)\neq \emptyset$ and
$\mathfs V^u(\wh y)\pitchfork \mathfs V^s(\wh z)\neq \emptyset$, then 
there is $n\in\Z$ such that $\mathfs V^u(\wh x)\pitchfork \wh f^n(\mathfs V^s(\wh z))\neq\emptyset$.
\end{corollary}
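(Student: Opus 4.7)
The plan is to combine the two transverse intersections via the Inclination Lemma (Proposition~\ref{lambda.u-lemma}), applied at the backward iterate $\wh y':=\wh f^{m_2}(\wh y)$ of $\wh y$. The Inclination Lemma will let forward iterates of subdiscs of a disc $\Delta_1\subset\mathfs V^u(\wh x)$ (produced from hypothesis (1)) $C^1$-approximate the local unstable manifold $W^u(\wh y')$. Because this local manifold already contains the homoclinic point $p_{m_2}$ coming from hypothesis (2), one further iteration by $f^{n_2-m_2}$ turns these approximating discs into pieces of $\mathfs V^u(\wh x)$ that are $C^1$-close near $p_{n_2}$ to $f^{n_2-m_2}(W^u(\wh y'))$; by $C^1$-persistence of transverse intersection, they must then cross $W^s(\wh f^{n_2}(\wh z))$ transversely, which is exactly what we need.

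Concretely, I would first extract from the hypotheses integers $m_i\le 0\le n_i$ ($i=1,2$), a disc $\Delta_1\subset f^{n_1-m_1}(W^u(\wh f^{m_1}(\wh x)))$ transverse to $W^s(\wh f^{n_1}(\wh y))$, and a point $\wh p$ with $p_{m_2}\in W^u(\wh f^{m_2}(\wh y))$ whose forward iterate $p_{n_2}\in W^s(\wh f^{n_2}(\wh z))$ realizes the transverse intersection $f^{n_2-m_2}(W^u(\wh f^{m_2}(\wh y)))\pitchfork W^s(\wh f^{n_2}(\wh z))$. After observing that $Y'$ is $\wh f$-invariant (via continuity of $\wh f$ and the inclusion $\wh f(K_n)\cup \wh f^{-1}(K_n)\subset K_{n+1}$), so that $\wh y'\in Y'$, and noting $W^s(\wh f^{n_1-m_2}(\wh y'))=W^s(\wh f^{n_1}(\wh y))$, I would apply Proposition~\ref{lambda.u-lemma} to $\wh y'$ with $m=n_1-m_2$, obtaining subdiscs $D_k\subset\Delta_1$ and integers $\ell_k\to\infty$ with $f^{\ell_k}(D_k)\to W^u(\wh y')$ in the $C^1$ topology.

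Next I would push forward by $f^{n_2-m_2}$. Since the finite orbit segment $p_{m_2},\ldots,p_{n_2}$ stays at positive distance from $\mathfs S$ (its backward and forward asymptotics are to orbits of $\wh y,\wh z\in Y'$), the map $f^{n_2-m_2}$ is $C^1$ on a neighborhood of $p_{m_2}$, so $f^{\ell_k+n_2-m_2}(D_k)$ is $C^1$-close near $p_{n_2}$ to $f^{n_2-m_2}(W^u(\wh y'))$. Openness of transversality then gives $f^{\ell_k+n_2-m_2}(D_k)\pitchfork W^s(\wh f^{n_2}(\wh z))\ne\emptyset$ for $k$ large. Since $D_k\subset f^{n_1-m_1}(W^u(\wh f^{m_1}(\wh x)))$, this reads $f^{N-m_1}(W^u(\wh f^{m_1}(\wh x)))\pitchfork W^s(\wh f^{n_2}(\wh z))\ne\emptyset$ with $N:=\ell_k+n_2-m_2+n_1$. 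Setting $n:=n_2-N\in\Z$ then yields $\mathfs V^u(\wh x)\pitchfork\mathfs V^s(\wh f^n(\wh z))\ne\emptyset$, equivalently $\mathfs V^u(\wh x)\pitchfork\wh f^n(\mathfs V^s(\wh z))\ne\emptyset$ via the identity $\wh f^n(\mathfs V^s(\wh z))=\mathfs V^s(\wh f^n(\wh z))$, which holds for every $n\in\Z$ by forward invariance of local stable manifolds along orbits.

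The most delicate step, I expect, is ensuring that the $C^1$-convergence $f^{\ell_k}(D_k)\to W^u(\wh y')$ is robust enough to yield $C^1$-convergence of the images $f^{\ell_k+n_2-m_2}(D_k)$ near the particular point $p_{n_2}$---not merely near the center $y_{m_2}$ of $W^u(\wh y')$. This relies on Proposition~\ref{lambda.u-lemma} delivering the convergence on all of the local piece $W^u(\wh y')$, and on the derivative of $f^{n_2-m_2}$ being uniformly bounded on a neighborhood of the orbit segment $p_{m_2},\ldots,p_{n_2}$, which is in turn forced by the orbit's uniform positive distance from $\mathfs S$.
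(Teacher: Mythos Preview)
Your proof is correct and follows the same overall strategy as the paper --- reduce to the Inclination Lemma applied at a point in the orbit of $\wh y$, then push forward to meet $W^s(\wh f^{n_2}(\wh z))$ transversely --- but your route is a bit more direct. The paper applies Proposition~\ref{lambda.u-lemma} \emph{twice}: first at $\wh f^{\ell}(\wh y)$ (in your notation, $\wh f^{n_1}(\wh y)$) to get discs converging to $W^u(\wh f^{\ell}(\wh y))$, then transports these via the inverse branch $g=f^{-1}_{y_i}\circ\cdots\circ f^{-1}_{y_{\ell-1}}$ to a neighborhood of $y_i$ (your $y_{m_2}$), and only then applies the Inclination Lemma again at $\wh f^{i}(\wh y)$ before pushing forward by $f^{j-i}$. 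You instead invoke Proposition~\ref{lambda.u-lemma} a single time, directly at $\wh y'=\wh f^{m_2}(\wh y)$ with the parameter $m=n_1-m_2$, exploiting that the Lemma already handles the case $m\neq 0$ internally. This buys you a shorter argument; the paper's version is perhaps more transparent about what happens along the orbit of $\wh y$ between indices $i$ and $\ell$. Your explicit verification that $Y'$ is $\wh f$--invariant and that the finite orbit segment $p_{m_2},\ldots,p_{n_2}$ stays away from $\mathfs S$ (so that $f^{n_2-m_2}$ is $C^1$ there) fills in details the paper takes for granted.
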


\begin{proof}
Using that $\mathfs V^u(\wh x)\pitchfork \mathfs V^s(\wh y)\neq \emptyset$, Lemma \ref{lemma.intersection}
implies the existence of $m\leq 0\leq \ell$ and $w\in W^u(\hf^m(\hx))$ such that $f^{\ell-m}(w)\in f^{\ell-m}[W^u(\hf^m(\hx)]\pitchfork W^s(\hf^\ell(\hy))$ and $\{w,f(w),\dots , f^{\ell-m}(w)\}\cap \mathfs S=\emptyset$.
Since $\mathfs S$ is closed, there is $D'\subset W^u(\hf^m(\hx))$ a disc of the same dimension of 
$W^u(\hf^m(\hx))$ containing $w$ such that $f^{\ell-m}(w)\in f^{\ell-m}(D')\pitchfork W^s(\hf^\ell(\hy))$ and
$\{D',f(D'),\dots , f^{\ell-m}(D')\}\cap \mathfs S=\emptyset$.

Using that $\mathfs V^u(\wh y)\pitchfork \mathfs V^s(\wh z)\neq \emptyset$, 
Lemma \ref{lemma.intersection} implies the existence of $i\leq 0\leq j$ 
and $v\in W^u(\hf^i(\wh y))$ such that $f^{j-i}(v)\in f^{j-i}[W^u(\hf^i(\hy)]\pitchfork W^s(\hf^j(\wh z))$ 
and $\{v,f(v),\dots , f^{j-i}(v)\}\cap \mathfs S=\emptyset$.
Write $\wh y=(y_k)_{k\in\Z}$, and let $g=f^{-1}_{y_i}\circ\cdots\circ f^{-1}_{y_{\ell-1}}$, which is well-defined 
on a neighborhood of $W^u(\wh f^\ell(\hy))$.  By Proposition \ref{lambda.u-lemma}, 
there are discs $D_k\subset D'$ and $n_k\to\infty$ such that $f^{n_k}(D_k)$
converges to $W^u(\wh f^\ell(\wh y))$ in the $C^1$ topology and $[D_k\cup f(D_k)\cup\cdots\cup f^{n_k}(D_k)]\cap\mathfs S=\emptyset$. Since $g$ is smooth, 
it follows that $g[f^{n_k}(D_k)]$ converges to $g[W^u(\wh f^\ell(\wh y))]$ in the $C^1$ topology. 
By \cite[Prop. 4.7(2)]{ALP-23}, this latter set is a subset of 
$W^u(\wh f^i(\wh y))$ containing $y_i$. Hence, for a fixed $k_0$ large enough,
$\wt\Delta:=g[f^{n_{k_0}}(D_{k_0})]$ is transverse to
$W^s(\wh f^i(\wh y))$. Applying again 
Proposition \ref{lambda.u-lemma} to $\wt\Delta$, there are discs $\wt D_k\subset\wt\Delta$ and $m_k\to\infty$
such that $f^{m_k}(\wt D_k)$ converges to $W^u(\wh f^i(\wh y))$ in the $C^1$ topology 
and $[\wt D_k\cup f(\wt D_k)\cup\cdots\cup f^{m_k}(\wt D_k)]\cap \mathfs S=\emptyset$.

Recalling that $v\in W^u(\hf^i(\wh y))$, for large $k$ there is $w_k\in \wt D_k$ such that 
$f^{m_k}(w_k)\to v$ and $w'_k\in \wt D_k$ such that $f^{j-i}(w'_k)\in f^{j-i}[f^{m_k}(\wt D_k)]\pitchfork W^s(\wh f^j(\wh z))$.
Choose $k_1$ large enough satisfying this latter transversality and so that $m_{k_1}>\ell-i$,
and write $\wt D_{k_1}=g[f^{n_{k_0}}(D)]$ for $D\subset D_{k_0}$. Then
$$
f^{m_{k_1}}(\wt D_{k_1})=f^{m_{k_1}}(g[f^{n_{k_0}}(D)])=f^{m_{k_1}-(\ell-i)}[f^{n_{k_0}}(D)]=
f^{m_{k_1}+n_{k_0}-(\ell-i)}(D).
$$
Since $D\subset D_{k_0}\subset W^u(\wh f^m(\wh x))$, it follows that
$$
f^{(j-i)+m_{k_1}+n_{k_0}-(\ell-i)}[W^u(\wh f^m(\wh x))]\pitchfork W^s(\wh f^j(\wh z))\neq\emptyset.
$$
Since $D$ does not intersect $\mathfs S$ up to iterate $(j-i)+m_{k_1}+n_{k_0}-(\ell-i)$,
the last part of Lemma \ref{lemma.intersection} implies there is $n\in\Z$ such that
$\mathfs V^u(\wh x)\pitchfork \wh f^n(\mathfs V^s(\wh z))\neq\emptyset$.
\end{proof}

\begin{proof}[Proof of Proposition \ref{prop.equivalence.relation}]
This result is a version of \cite[Proposition 2.11]{BCS-Annals} in our context, 
and we follow the same proof. By the standing assumption, there are sets $A_1,A_2,A'_2,A_3\subset \wh M$ 
with $\mu_1(A_1)>0$, $\mu_2(A_2)>0$, $\mu_2(A_2')>0$, $\mu_3(A_3)>0$ such that 
$\mathfs V^u(\wh x^{(1)})\pitchfork \mathfs V^s(\wh x^{(2)})\neq\emptyset$
for all $(\wh x^{(1)},\wh x^{(2)})\in A_1\times A_2$ and 
$\mathfs V^u(\wh x^{(2)})\pitchfork \mathfs V^s(\wh x^{(3)})\neq\emptyset$
for all $(\wh x^{(2)},\wh x^{(3)})\in A_2'\times A_3$. Since $\mu_2$ is ergodic, we can choose $A_2=A_2'$
and assume that:
\begin{enumerate}[$\circ$]
\item For every $(\wh x^{(1)},\wh x^{(2)})\in A_1\times A_2$ there is $n\in\Z$ such that
$$
\mathfs V^u(\wh x^{(1)})\pitchfork \wh f^n(\mathfs V^s(\wh x^{(2)}))\neq\emptyset.
$$
\item For every $(\wh x^{(2)},\wh x^{(3)})\in A_2\times A_3$ there is $n\in\Z$ such that
$$
\mathfs V^u(\wh x^{(2)})\pitchfork \wh f^n(\mathfs V^s(\wh x^{(3)}))\neq\emptyset.
$$
\end{enumerate}
Reducing $A_1,A_2,A_3$ if necessary, we can also assume that for $i=1,2,3$:
\begin{enumerate}[$\circ$]
\item $\mu_i(A_i)>0$.
\item $A_i$ is contained in a Pesin block of $\mu_i$.
\item Each element of $A_1\cup A_2\cup A_3$ satisfies the conditions of Proposition \ref{lambda.u-lemma}.
\item $A_i$ is contained in the support of $\mu_i|_{A_i}$.
\end{enumerate}
Fix $\wh x^{(1)}\in A_1$ that is backwards recurrent to $A_1$, $\wh x^{(2)}\in A_2$ arbitrary, and
$\wh x^{(3)}\in A_3$ that is forward recurrent to $A_3$. By Corollary \ref{cor.trans}, there is $n\in\Z$ such that
$\mathfs V^u(\wh x^{(1)})\pitchfork \wh f^n(\mathfs V^s(\wh x^{(3)}))\neq\emptyset$.
Translating this to local invariant sets, there are $n_1,n_3\geq 0$
such that
$$
\wh f^{n_1}(V^u(\wh f^{-n_1}(\wh x^{(1)})))\pitchfork \wh f^{n-n_3}(V^s(\wh f^{n_3}(\wh x^{(3)})))\neq\emptyset.
$$
The same transversality holds changing $n_1,n_3$ to $N_1,N_3$ with $N_i\geq n_i$.
Choose $N_1\geq n_1$ and $N_3\geq n_3$ such that $\wh f^{-N_1}(\wh x^{(1)})\in A_1$ and 
$\wh f^{N_3}(\wh x^{(3)})\in A_3$. Then 
$$
\wh f^{N_1}(V^u(\wh f^{-N_1}(\wh x^{(1)})))\pitchfork \wh f^{n-N_3}(V^s(\wh f^{N_3}(\wh x^{(3)})))\neq\emptyset.
$$
Since $W^{s/u}$ are continuous on Pesin blocks,
\begin{equation}\label{eq-transitivity}
\wh f^{N_1}(V^u(\wh y^{(1)}))\pitchfork \wh f^{n-N_3}(V^s(\wh y^{(3)}))\neq\emptyset
\end{equation}
for $\wh y^{(1)}\in A_1$ close to $\wh f^{-N_1}(\wh x^{(1)})$ and $\wh y^{(3)}\in A_3$
close to $\wh f^{N_3}(\wh x^{(3)})$. \footnote{More specifically, the transversality condition
for $\hx^{(1)},\hx^{(3)}$
means that $f^k(W^u(\hf^\ell(\wh x^{(1)})))\pitchfork W^s(\hf^m(\wh x^{(3)}))\neq\emptyset$
for some $k,\ell,m$. Since $W^{s/u}$ varies continuously on Pesin blocks,
this transversality remains true for points close to $\hx^{(1)}$ and $\hx^{(3)}$.}
Choosing a neighborhood $B_1\subset A_1$
of $\wh f^{-N_1}(\wh x^{(1)})$ with $\mu_1(B_1)>0$ and a neighborhood $B_3\subset A_3$
of $\wh f^{N_3}(\wh x^{(3)})$ with $\mu_3(B_3)>0$, it follows that (\ref{eq-transitivity})
holds for all $(\wh y^{(1)},\wh y^{(3)})\in B_1\times B_3$. Retranslating this back 
to global invariant sets, we conclude that 
$\mathfs V^u(\wh z^{(1)})\pitchfork \wh f^n(\mathfs V^s(\wh z^{(3)}))\neq\emptyset$
for all $(\wh z^{(1)},\wh z^{(3)})\in \wh f^{N_1}(B_1)\times \wh f^{-N_3}(B_3)$, and so
$\mu_1\preceq \mu_3$.
\end{proof}

\subsection{Proof of Theorem \ref{Thm-Main}}

Fix $\mu$ an adapted, ergodic and hyperbolic $f$--invariant probability measure, 
a real number $\chi>0$, and let $\wh\pi$ be the coding from Theorem \ref{Thm-ALP}. We will find
an irreducible component $\Sigma\subset\whS$ that satisfies (C2) and (C9).
The proof is divided into two steps.

\medskip
\noindent
{\sc Step 1:} Let $\{\mathcal O_i\}$ be the set of all $\chi$--hyperbolic periodic orbits
homoclinically related to $\wh\mu$. Then there is an irreducible component
$\Sigma \subset\wh\Sigma$ that lifts all $\{\mathcal O_i\}$.

\begin{proof}[Proof of Step $1$] For diffeomorphisms, this statement is 
\cite[Lemma 3.12]{BCS-Annals}, and we follow the same approach.
Fix $\wh x^{(i)}\in\mathcal O_i$. We start constructing, for each $n$,
a transitive compact invariant $\chi$--hyperbolic set $K_n\subset \wh M$ that contains $\mathcal O_1,\ldots,\mathcal O_n$.
For each $i,j$, let $\wh y^{(ij)}\in\mathfs V^u(\wh x^{(i)})\pitchfork \mathfs V^s(\wh f^{a_{ij}}(\wh x^{(j)}))$
with $0\leq a_{ij}\leq |\mathcal O_j|$, where $\hy^{(ii)}:=\hx^{(i)}$.
The set $L=\bigcup_{1\leq i,j\leq n}\mathcal O(\wh y^{(ij)})$
is $\chi$--hyperbolic, compact and invariant. Hence $L$ is uniformly hyperbolic, and so there are 
$\ve,\delta>0$ such that every $\ve$--pseudo-orbit of $L$ is $\delta$--shadowed.\footnote{Here we consider
the classical notions, i.e. $\{\wh z^{(i)}\}_{i\in\Z}$ is an $\ve$--pseudo-orbit if $d(\wh f(\wh z^{(i)}),\wh z^{(i+1)})<\ve$
and $d(\wh z^{(i)},\wh f^{-1}(\wh z^{(i+1)}))<\ve$ for all $i\in\Z$. Shadowing is obtained as in
the classical setting of uniformly hyperbolic diffeomorphisms, see e.g. \cite[Theorem IV.2.3]{Qian-Zhu-Xie}.} 
Let $\ve'>0$ with the following property: if $\wh z,\wh w\in L$ satisfy $\wh d(\wh z,\wh w)<\ve'$,
then $\wh d(\wh f(\wh z),\wh f(\wh w))<\tfrac{\ve}{2}$.  
Fix $m$ large such that the set $\{\wh f^k(\wh y^{(ij)}):1\leq i,j\leq n\text{ and }-m\leq k<m+a_{ij}\}$
is $\ve'$--dense in $L$. As done in \cite[Lemma 3.11]{BCS-Annals}, this finite set defines a {\em transitive} compact 
TMS, whose elements are $\ve$--pseudo-orbits. Let $\wt\pi$ be the shadowing map,
and $K_n$ be the image of $\wt\pi$. This set satisfies the claimed properties. 

Recall that $\wh\pi:\wh\Sigma\to\nuh^\#$ is the coding given by Theorem \ref{Thm-ALP}.
By property ($\wh{\textrm C}$9), for each $n$ there is an irreducible component $\Sigma_n\subset\wh\Sigma$
which lifts $K_n$. Since $\mathcal O_1$ has finitely many lifts to $\wh\Sigma^\#$, 
and every $\Sigma_n$ contains one such lift, there is an irreducible component $\Sigma$
such that $\Sigma_n=\Sigma$ for infinitely many $n$. Clearly, $\Sigma$ satisfies the conditions
of Step 1.
\end{proof}

\medskip
\noindent
{\sc Step 2:} Every $\nu\in\mathbb P_h(\wh f)$ that is homoclinically
related to $\wh\mu$ lifts to $\Sigma$.  

\begin{proof}[Proof of Step $2$]
For diffeomorphisms, this statement is \cite[Lemma 3.13]{BCS-Annals}, and we follow the same approach.
We will show that $\nu$--a.e. $\wh x$ has a lift to $\Sigma^\#$. Once this is proved,
we can lift $\nu$ to $\Sigma^\#$ as in \cite{Sarig-JAMS}, defining
$$
\eta=\int_{\wh M}\left(\frac{1}{\#(\wh\pi^{-1}(\wh x)\cap\Sigma^\#)}\sum_{\un R\in\wh\pi^{-1}(\wh x)\cap\Sigma^\#}\delta_{\un R}\right)d\nu(\wh x).
$$ 
By Theorem \ref{Thm-ALP}, $\nu$ has an ergodic lift $\overline\nu$ to $\wh\Sigma^\#$. 
Let $\un R\in\wh\Sigma^\#$ be a recurrent and generic point for $\overline\nu$, i.e.
$\tfrac{1}{n}\sum_{i=0}^{n-1}\delta_{\wh\sigma^i(\un R)}\to \overline\nu$ in the weak--$^*$  topology.
The point $\wh x=\wh\pi(\un R)$ is generic for $\nu$. We first show that $\wh x$ has a lift
to $\Sigma$. Since $\un R$ is recurrent, there is a sequence $\{\un q^{(i)}\}\subset\wh\Sigma$
of periodic points such that $\un q^{(i)}\to \un R$. Indeed, since $\un R$ is recurrent
there are $R\in\mathfs R$ and sequences $n_i\to+\infty$
and $m_i\to -\infty$ such that $R_{n_i}=R_{m_i}=R$ for all $i$, so we can define 
$\un q^{(i)}$ by repeating the pattern $(R_{m_i},R_{m_i+1},\ldots,R_{n_i})$.
Let $\wh x^{(i)}=\wh\pi(\un q^{(i)})$, which is a hyperbolic periodic point, since
it belongs to $\wh\pi[\wh\Sigma^\#]=\nuh^\#$.
Since $\un q^{(i)}$ is (symbolically) homoclinically related to $\overline\nu$, its
projection $\wh x^{(i)}$ is homoclinically related to $\nu$. By Proposition
\ref{prop.equivalence.relation}, $\wh x^{(i)}$ is homoclinically related to $\mu$.
By Step 1, there is $\un p^{(i)}\in\Sigma$ periodic such that $\wh\pi(\un p^{(i)})=\wh x^{(i)}$.
Now, observe that since $\{\un q^{(i)}\}$ is relatively compact ($q^{(i)}_0=R_0$ for all $i$ and
the vertices defining $\wh\Sigma$ have finite degrees), property (C8) of Theorem \ref{Thm-ALP}
implies that $\{\un p^{(i)}\}$ is relatively compact as well. Therefore, we can pass to a converging
subsequence $\un p^{(i_k)}\to \un p\in\Sigma$. By continuity, $\wh\pi(\un p)=\wh x$. Finally, we show as in 
\cite[Lemma 3.13]{BCS-Annals} that $\un p\in\Sigma^\#$, thus completing the proof of Step 2. 
\end{proof}

Defining $\pi=\wh\pi|_{\Sigma}$, we have proved (C0) and item (a) of (C2). Item (b) of (C2) is proved as in \cite[Prop. 3.6]{BCS-Annals}.\\

\medskip
\noindent
{\bf Property (C9).} By Step 1 in the proof of property ($\wh{\textrm C}$9) of
Theorem \ref{Thm-ALP}, there is $X_0\subset\whS$ compact such that $\wh\pi(X_0)\supset K$.
Therefore, given $\hx\in K$ there is $\un R\in X_0$ such that $\wh\pi(\un R)=\hx$. Since $X_0$ is compact,
$\un R$ is recurrent and so we can repeat the argument of Step 2 above to find $\un p\in \Sigma^\#$
such that $\wh\pi(\un p)=\hx$. We have thus lifted every element of $K$ to $\Sigma$. Now repeat
the proof of  property ($\wh{\textrm C}$9) of Theorem \ref{Thm-ALP} to $\pi$.
This gives $X\subset\Sigma$ compact such that $\pi(X)=K$.\\

We have thus concluded the proof of Theorem \ref{Thm-Main}.

\begin{proof}[Proof of Theorem~\ref{thm.criterion}]
Let $\mu$ be an ergodic adapted hyperbolic $f$--invariant measure, and fix $\chi>0$. By Theorem~\ref{Thm-Main},
there is $\Sigma$ an irreducible TMS and  $\pi:\Sigma\to\wh M$ a coding such that
every adapted $\chi$--hyperbolic $\nu\stackrel{h}{\sim}\mu$ lifts to a measure $\overline\nu$ on $\Sigma$.
Also, property (C2) implies that if $\nu$ is a measure of maximal entropy for $f$ then $\overline\nu$
is a measure of maximal entropy for $\sigma$.
By \cite{Gurevich-Topological-Entropy,Gurevich-Measures-Of-Maximal-Entropy}, 
$\sigma:\Sigma\to\Sigma$ has at most one measure of maximal entropy.
Hence $f$ has at most one adapted, $\chi$--hyperbolic measure of maximal entropy that is 
homoclinically related to $\mu$. Since $\chi>0$ is arbitrary,
we conclude that $f$ has at most one adapted hyperbolic measure of maximal entropy
homoclinically related to $\mu$.

Now assume that $\nu$ is an adapted measure of maximal entropy homoclinically related to $\mu$.
Let $\wh\nu$ be its lift to $\hM$ and $\overline\nu$ its lift to $\Sigma$.
It remains to show that $\wh\nu$ is Bernoulli times a finite rotation, and to identify
its support. We start studying the Bernoulli property.
The measure $\overline\nu$ is a measure of maximal entropy for $\sigma$, hence it is isomorphic
to the product of a Bernoulli shift and a finite rotation \cite{Sarig-Bernoulli-JMD}.
As in the proof of Theorem 1.1 of \cite{Sarig-Bernoulli-JMD}, the same happens to $\wh\nu$.

Let $\mathcal O$ be a hyperbolic periodic orbit for $\hf$ homoclinically related to $\mu$.  Finally, we show that the support of $\wh\nu$ equals ${\rm HC}(\mathcal O)$.  Observe that $\mathcal O$ is homoclinically related to $\nu$.
The measure $\overline\nu$ has full support in $\Sigma$ \cite{Buzzi-Sarig}, hence
${\rm supp}(\wh\nu)=\overline{\pi(\Sigma)}$. It is not hard to see that
${\rm supp}(\wh\nu)\subset{\rm HC}(\mathcal O)$. Indeed, given $\hx\in {\rm supp}(\wh\nu)$
and a neighborhood $U$ of $\hx$ with $\wh\nu(U)>0$, we can take $\hy\in U\cap Y'$ 
such that $\mathfs V^u(\mathcal{O})\pitchfork \mathfs V^s(\hy)\neq\emptyset$ and 
$\mathfs V^u(\hy)\pitchfork \mathfs V^s(\mathcal{O})\neq\emptyset$. By the inclination lemma
(Proposition \ref{lambda.u-lemma}),
it follows that $\hy\in {\rm HC}(\mathcal O)$. We thus have
$$
\overline{\pi(\Sigma)}={\rm supp}(\wh\nu)\subset  {\rm HC}(\mathcal O)
$$
and so it is enough to show that
$\pi(\Sigma)$ is dense in ${\rm HC}(\mathcal O)$. 
The proof of this fact is made as in \cite[Corollary 3.3]{BCS-Annals},
which works equality well in our context due to the properties (C1)--(C9).
\end{proof}

\begin{remark}
The above proof gives a uniqueness result for more general equilibrium states, as follows.
Assuming the setting of Theorem \ref{thm.criterion}, let $\varphi:M \to \mathbb{R}$ such that
its lift $\overline\vf=\varphi \circ \vt\circ\pi$ to $\Sigma$ is H\"older continuous. 
The potential $\overline\vf$ has at most one equilibrium state \cite{Buzzi-Sarig}, hence 
$\vf$ has at most one adapted hyperbolic equilibrium state that is homoclinically related to $\mu$.
\end{remark}

\subsection{The Bernoulli property} \label{subsection.bernoulli}
In this section, we collect two properties ensuring that, when it exists, the
measure of maximal entropy in Theorem \ref{thm.criterion} is Bernoulli. We will apply these criteria in the proofs of
Theorems \ref{thm.billiards}, \ref{thm.unique.ACS}, \ref{thm.ergodicity}.
Recall that, for a non-invertible map $f$, an $f$--invariant measure $\mu$ is \emph{Bernoulli} if its lift
$\wh\mu$ to $\hM$ is isomorphic to a Bernoulli shift.

In the sequel, we let $\wh{\pi}: \widehat{\Sigma}\to \hM$ be the map given by Theorem \ref{Thm-ALP}.
Call $\varphi:M\to \R$ an \emph{admissible potential} if its lift $\overline{\varphi}=\varphi\circ \vartheta\circ\wh{\pi}:\widehat{\Sigma}\to \R$ is H\"older continuous.
Let $\vf_n=\vf+\vf\circ f+\cdots+\vf\circ f^{n-1}$ be the
$n$--th Birkhoff sum of $\vf$. If $\vf$ is admissible then so is each
$\vf_n$. \footnote{For all $k\geq 0$, the lift
$\overline{\vf\circ f^k}=\vf\circ f^k\circ \vt\circ \hat{\pi} = \vf\circ \vt\circ \wh f^k\circ \hat{\pi}=
\vf\circ\vt\circ\hat{\pi}\circ\hat{\sigma}^k=\overline{\vf}\circ\hat{\sigma}^k$
is the composition of two Hölder maps.}

\begin{remark} Observe that $\varphi$ does not even need to be continuous
to be admissible;
we only require its lift to $\widehat{\Sigma}$ to be H\"older continuous. This condition
disregards the trajectories that intersect the singular set $\mathfs S$.
\end{remark}

Below, we consider equilibrium states for pairs $(f^n,\vf_n)$.

\begin{proposition}\label{thm.unique-bernoulli}
Under the assumptions of Theorem \ref{thm.criterion}, let $\varphi:M\to\R$ be an admissible potential
and $\mu$ an ergodic equilibrium state for $(f,\varphi)$ that is adapted and hyperbolic. If one of the two
conditions below hold:
\begin{enumerate}[{\rm (1)}]
\item For all $n>0$, there is at most one equilibrium state for $(f^n,\vf_n)$;
\item There exists $\nu\sim \mu$ adapted and hyperbolic which is ergodic for all $f^n$, $n>0$;
\end{enumerate}
then $\mu$ is Bernoulli. Additionally, if \emph{(2)} holds and $\nu$ is fully supported then $\mu$ is fully supported.
\end{proposition}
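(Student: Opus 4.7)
The plan is to use the TMS coding $\pi : \Sigma \to \hM$ given by Theorem~\ref{Thm-Main} applied to $\mu$, combined with Sarig's Bernoulli theorem for countable Markov shifts. Let $p \geq 1$ denote the period of the irreducible TMS $\Sigma$, let $\overline\mu$ be the lift of $\hmu$ via (C2), and set $\overline\vf := \vf\circ\vt\circ\pi$. By the Remark following Theorem~\ref{thm.criterion}, $\overline\mu$ is the unique equilibrium state on $\Sigma$ for the H\"older potential $\overline\vf$ (via \cite{Buzzi-Sarig}); by Sarig \cite{Sarig-Bernoulli-JMD}, $\overline\mu$ is isomorphic to a Bernoulli shift times a cyclic rotation of period $p$. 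Hence $\hmu = \pi_*\overline\mu$ is isomorphic to Bernoulli $\times\,\Z/q\Z$ for some $q \mid p$, and the goal is to show $q = 1$.

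For case~(1), I would first show $\mu$ is $f^n$-ergodic for every $n \geq 1$. Using $P(f^n,\vf_n) = n P(f,\vf)$ and Abramov's identity $h_\mu(f^n) + \int \vf_n\,d\mu = n \bigl( h_\mu(f) + \int \vf\,d\mu \bigr) = n P(f,\vf)$, the measure $\mu$ is an equilibrium state for $(f^n,\vf_n)$, and so is each of its $f^n$-ergodic components; the uniqueness in (1) forces all such components to equal $\mu$. Therefore $\hmu$ is $\hf^n$-ergodic for every $n$, has no non-trivial cyclic factor, and so $q = 1$, giving the Bernoulli property.

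For case~(2), pick a $\sigma$-ergodic component $\eta$ of the lift of $\wh\nu$ from (C2); finite-to-oneness of $\pi$ (property (C1)) combined with $\hf$-ergodicity of $\wh\nu$ gives $\pi_*\eta = \wh\nu$. The cyclic decomposition $\Sigma = \bigsqcup_{i=0}^{p-1}\Sigma_i$ induces $\eta = \frac{1}{p}\sum_i \eta_i$ with $\eta_i := p\,\eta|_{\Sigma_i}$ the $\sigma^p$-ergodic components. Since each $\pi_*\eta_i$ is $\hf^p$-ergodic, $\hf^p$-ergodicity of $\wh\nu$ forces $\pi_*\eta_i = \wh\nu$ for all $i$; thus $\wh\nu(\pi(\Sigma_i)) = 1$ for all $i$, and $\wh\nu$ concentrates on $\bigcap_{i=0}^{p-1}\pi(\Sigma_i)$. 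I would then argue that the analogous decomposition $\hmu = \frac{1}{p}\sum_i \pi_*\overline\mu_i$ also collapses to $\pi_*\overline\mu_i = \hmu$, yielding $\hf^p$-ergodicity of $\hmu$ and hence $q = 1$. This transfer is the main obstacle: it uses that $\mathrm{supp}(\hmu) = \overline{\pi(\Sigma)}$ (from full support of $\overline\mu$ on $\Sigma$ by \cite{Buzzi-Sarig}) combined with $\bigcap_i \pi(\Sigma_i)$ being $\hf$-invariant of full $\wh\nu$-measure, to conclude $\hmu\bigl(\bigcap_i \pi(\Sigma_i)\bigr) = 1$ and thence $\pi_*\overline\mu_i = \hmu$.

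For the full-support claim under~(2): $\overline\mu$ has full support on $\Sigma$ by \cite{Buzzi-Sarig}, so $\mathrm{supp}(\hmu) = \overline{\pi(\Sigma)}$. If $\nu$ is fully supported then $\hM = \mathrm{supp}(\wh\nu) = \overline{\pi(\mathrm{supp}(\eta))} \subset \overline{\pi(\Sigma)} = \mathrm{supp}(\hmu)$, so $\hmu$ is fully supported and hence so is $\mu$.
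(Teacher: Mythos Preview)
Your argument for case~(1) and for the full-support claim is correct and essentially matches the paper's proof.

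For case~(2), however, your ``transfer'' step has a genuine gap. You correctly deduce that $\wh\nu\bigl(\bigcap_i\pi(\Sigma_i)\bigr)=1$, but from this and $\mathrm{supp}(\hmu)=\overline{\pi(\Sigma)}$ you cannot conclude $\hmu\bigl(\bigcap_i\pi(\Sigma_i)\bigr)=1$: the set $\bigcap_i\pi(\Sigma_i)$ can be dense, $\hf$-invariant, and of full $\wh\nu$-measure while still having $\hmu$-measure zero, since $\hmu$ and $\wh\nu$ are typically mutually singular. So there is no mechanism to force $\pi_*\overline\mu_i=\hmu$ along these lines, and the collapse of the cyclic decomposition for $\hmu$ remains unproved.

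The paper avoids this obstacle by a different route: it works with the $\hf^p$-ergodic components $\hmu_0,\ldots,\hmu_{p-1}$ of $\hmu$ directly (rather than with $\wh\nu$). The key observation is that $\nu\sim\mu$ implies $\nu\sim\mu_i$ for every $i$, so all the $\mu_i$ belong to the \emph{same} homoclinic class. One then applies Theorem~\ref{Thm-Main} to obtain an irreducible coding on which each $\mu_i$ lifts to an equilibrium state $\overline\mu_i$ for the H\"older potential $\overline\vf_p$; uniqueness from \cite{Buzzi-Sarig} forces these lifts to coincide, and hence $p=1$. The total ergodicity of $\nu$ is used only to guarantee that the $\mu_i$ lie in a single homoclinic class (so that a single irreducible coding captures all of them), not via any measure-theoretic transfer through $\pi(\Sigma_i)$.
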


\begin{proof}
Let $\chi>0$ small such that $\mu$ is $\chi$--hyperbolic and, when (2) holds, $\nu$ is also 
$\chi$--hyperbolic. 
Applying Theorem \ref{thm.criterion} to $\mu$ and $\chi$, we know by \cite{Sarig-Bernoulli-JMD}
that $\wh\mu$ is Bernoulli times a finite rotation,
i.e. there are $p>0$
and disjoint sets $X_0,X_1,\ldots,X_{p-1}\subset \hM$ such that $\hf(X_i)=X_{i+1}$ and $(\hf^p,\hmu_i)$ a Bernoulli shift
for $i=0,\ldots,p-1$, where $\hmu_i=\hmu(\cdot |X_i)$. Let $\mu_i=\hmu_i\circ\vt^{-1}$.
Since $\mu$ is an equilibrium state for the pair $(f,\vf)$,
it is not hard to see that each $\mu_i$ is an equilibrium state for $(f^p,\vf_p)$. 
Note also that $\mu_i$ is adapted and $\chi$--hyperbolic.

Assume that (1) holds. Since $\mu_0,\ldots,\mu_{p-1}$ are distinct equilibrium states for $(f^p,\vf_p)$,
it follows that $p=1$, i.e. $\wh\mu$ is Bernoulli.

Now assume that (2) holds. Since $\nu\sim\mu$,
necessarily $\nu\sim\mu_i$ for $i=0,1,\ldots,p-1$. By Theorem \ref{Thm-Main},
$\mu_0,\ldots,\mu_{p-1}$ lift to $\overline{\mu}_0,\ldots,\overline{\mu}_{p-1}$. Each 
$\overline{\mu}_i$ is an equilibrium state for the pair $(\sigma^p,\overline{\vf}_p)$. Since 
$\overline{\vf}_p$ is H\"older continuous, \cite{Buzzi-Sarig} implies that there is at most one
such equilibrium measure, hence $p=1$. 
Finally, assume also that $\nu$ is fully supported. Since $\nu$ has a lift $\overline\nu$ to $\Sigma$,
we have $M={\rm supp}(\nu)\subset\overline{\pi(\Sigma)}$ and so $\overline{\pi(\Sigma)}=M$.
Also by  \cite{Buzzi-Sarig}, $\overline{\mu}$ is fully supported in $\Sigma$ and so, 
as in the end of the proof of Theorem \ref{thm.criterion}, we conclude that
${\rm supp}(\mu)=\overline{\pi(\Sigma)}=M$.
\end{proof}

\section{Finite horizon dispersing billiards}\label{sec.billiards}

In this section we prove Theorem \ref{thm.billiards}.
We show that finite horizon dipersing billiards have a single 
homoclinic class. The proof follows closely some arguments
of Sina{\u\i} as made in \cite{CDLZ-24}. For that, we refer the reader 
to nowadays classical results in the field, which may be found in
the book of Chernov and Markarian \cite{Chernov-Markarian}.

Recall that $\mathfs T:= \mathbb{T}^2 \setminus (\bigcup_{i=1}^\ell O_i)$ is a billiard table,
where $O_1,\ldots,O_\ell$ are pairwise disjoint closed, convex subsets of $\mathbb{T}^2$
such that each boundary $\partial O_i$ is a $C^3$ curve with strictly positive curvature, 
$M = \bigcup_{i=1}^\ell (\partial O_i \times [-\pi/2, \pi/2])$, and
$f:M\to M$ is the associated dispersing billiard map, which is a map with singularities 
$\mathfs S=\mathfs S_0\cup f^{-1}(\mathfs S_0)$ where $\mathfs S_0=\{(r,\vf)\in M:|\vf|=\tfrac{\pi}{2}\}$
are the grazing collision. Recall also that we are assuming that $f$ has finite horizon, i.e.
$\sup_{x\in M}\tau(x)<\infty$ where $\tau(x)$ is the flight time from $x\in M$ 
to $f(x)$. 

Parametrize $\partial O_i$ by arclength $r$ (oriented clockwise).
It is well-known that $f$ preserves a smooth probability measure $\musrb$, such that
$$
d\musrb(x) = \frac{1}{2|\partial Q|} \cos \vf \, dr d\vf
$$
which is adapted (see \cite[Section I.3]{KSLP}), hyperbolic and ergodic (even Bernoulli).
In particular, $\musrb$--a.e. $x\in M$ has local stable/unstable manifolds $W^{s/u}(x)$.

We show that points with local invariant manifolds are homoclinically related. For that, we need some notation.
Let $\mathcal K$ denote the curvature function of the obstacles. 
Let $\tau_{\min},\mathcal K_{\min}$ denote the minima of $\tau,\mathcal K$ respectively.

\medskip
\noindent
{\sc Invariant cones (see \cite[Section~4.5]{Chernov-Markarian}):}
The map $f$ has {\em stable/unstable invariant cones} 
\begin{align}\label{cones}
\begin{array}{l}
\mathcal C^s_x=\left\{(dr,d\vf)\in T_xM: -\mathcal K-\frac{\cos\vf}{\tau(x)}\leq d\vf/dr\leq -\mathcal K\right\}\\
\\
\mathcal C^u_x=\left\{(dr,d\vf)\in T_xM: \mathcal K\leq d\vf/dr\leq \mathcal K+\tfrac{\cos\vf}{\tau(f^{-1}(x))}\right\},
\end{array}
\end{align}
which are uniformly transverse and contract/expand uniformly with a rate at
least 
\begin{equation}\label{equation.Lambdadef}
\Lambda:=1+2\tau_{\min} \mathcal K_{\min}\textrm{  (see \cite[Eq. (4.19)]{Chernov-Markarian}).}
\end{equation}
We call a $C^1$ curve $W$ a {\em stable curve} if the tangent vector at each $x \in W$ lies in
$\mathcal C^s_x$. Unstable curves are defined similarly.

\medskip
\noindent
{\sc Solid rectangles and Cantor rectangles (see \cite[Section~7.11]{Chernov-Markarian}):}	
A closed subset $D\subset M$ is called a {\em solid rectangle} if $D$ is the closure of its interior and
$\partial D$ is the union of
four smooth curves, two local stable and two local unstable manifolds. 
Let $\mathfrak{S}^{s/u}(D)$ denote the set of local stable/unstable manifolds
of points in $D$ that do not terminate in the interior of $D$.
We then define the {\em Cantor rectangle} 
$$
R(D) = \mathfrak{S}^s(D) \cap \mathfrak{S}^u(D) \cap D.  
$$
Conversely, given a Cantor rectangle $R$, we denote by $D(R)$ the smallest solid
rectangle containing $R$.

\medskip
\noindent
{\sc $s/u$--subrectangles (see \cite[Section~7.11]{Chernov-Markarian}):}
Given a Cantor rectangle $R$, we call $S \subset R$ an {\em $s$--subrectangle}
of $R$ if for each $x \in S$ we have $W^s(x) \cap S = W^s(x) \cap R$.
A {\em $u$--subrectangle} is defined similarly. 

\medskip
\noindent
{\sc $s/u$--crossing (see \cite[Section~7.11]{Chernov-Markarian}):}	
Given a Cantor rectangle $R$, we say a local stable manifold $W^s$
{\em fully crosses} $R$ if $W^s \cap \mathring{D}(R) \ne \emptyset$ and $W^s$ does not terminate in the
interior of $D(R)$. A Cantor rectangle $R'$ is said to {\em $s$--cross} $R$
if every stable manifold $W^s \in \mathfrak{S}^s(R')$
fully crosses $R$. We define $u$--crossings similarly.

\medskip
Given a Cantor rectangle $R$, the set $f^n(R)$ is a finite union of (maximal) Cantor rectangles,
which we call $R_{n,i}$. It is clear that each $f^{-n}(R_{n,i})$ is an
$s$--subrectangle of $R$. Recall the set ${\rm NUH}^\#_\chi$ introduced in Section \ref{ss.proof.ALP}.

\begin{lemma}\label{lemma.homoclinic.billiards}
If $x,y\in {\rm NUH}^\#_\chi$ then there is $k>0$ such that $f^k(W^u(x))$ and
$W^s(y)$ intersect transversally in a point not belonging to $\bigcup_{n\in\Z}f^n(\mathfs S)$.
\end{lemma}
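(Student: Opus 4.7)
The plan is to find a Cantor rectangle $R$ that is simultaneously fully $s$-crossed by $W^s(y)$ and fully $u$-crossed by $f^k(W^u(x))$ for some $k>0$; the uniform transversality of the invariant cones in \eqref{cones} then forces a unique transverse intersection at the crossing point. The two ingredients required are (a) the existence of arbitrarily small Cantor rectangles around any point with a local stable manifold of positive length, and (b) Chernov's growth lemma, which controls forward iterates of an unstable curve despite the fragmentation caused by the singular set.

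First I would construct a target Cantor rectangle $R$ that $W^s(y)$ fully $s$-crosses. Since $\wh y\in{\rm NUH}^\#_\chi$, the local stable manifold $W^s(y)=W^s(y_0)$ has length bounded below by a positive quantity proportional to $q(\wh y)$. The standard existence theory of Cantor rectangles in Sina{\u\i} billiards (see \cite[Sections~7.11--7.12]{Chernov-Markarian}) provides Cantor rectangles of arbitrarily small diameter whose solid hull contains $y_0$; selecting such an $R$ with diameter smaller than, say, half of $\mathrm{length}(W^s(y))$ guarantees that the stable curve $W^s(y)$ fully $s$-crosses $R$.

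Next, to produce $k>0$ such that $f^k(W^u(x))$ fully $u$-crosses $R$, I would invoke Chernov's growth lemma \cite[Chapter~5]{Chernov-Markarian}. Applied to $W^u(x)$, this asserts that for any prescribed $\delta_0>0$ and all sufficiently large $n$, a definite fraction of the connected components of $f^n(W^u(x)\setminus\bigcup_{j=0}^{n} f^{-j}(\mathfs S))$ have length exceeding $\delta_0$. Combined with Sina{\u\i}'s equidistribution argument for long unstable curves, which is the geometric content of the Sina{\u\i}--Chernov coupling method (see \cite[Chapter~7]{Chernov-Markarian}), this yields some $n\geq 1$ and a connected component $\gamma\subset f^n(W^u(x))$ that fully $u$-crosses the fixed Cantor rectangle $R$. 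Since $\gamma$ has tangent vectors in $\mathcal C^u$ while $W^s(y)\cap D(R)$ has tangent vectors in $\mathcal C^s$, the uniform cone transversality of \eqref{cones} forces the two curves to meet in a single point, transversally, and taking $k=n$ concludes the proof.

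The main obstacle is the quantitative balance, inside the growth lemma, between the exponential expansion at rate $\geq\Lambda$ of \eqref{equation.Lambdadef} and the fragmentation of $f^n(W^u(x))$ caused by the accumulating preimages $\bigcup_{j=0}^n f^{-j}(\mathfs S)$ of the grazing set: the naive bound $\mathrm{length}(f^n\gamma)\geq\Lambda^n\,\mathrm{length}(\gamma)$ fails if the growing curve is chopped into too many short pieces, and one needs the sharp version of the growth lemma showing that expansion dominates fragmentation on a positive fraction of components. Once this black box is in place, matching the resulting long unstable curve to the target rectangle $R$ via standard density and local product structure arguments is routine.
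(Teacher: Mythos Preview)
Your overall strategy is close in spirit to the paper's, but Step~2 contains a genuine gap. The growth lemma only guarantees that a definite fraction of the connected components of $f^n(W^u(x))$ are \emph{long}; it says nothing about \emph{where} those long pieces lie. To conclude that some component fully $u$--crosses your specific target rectangle $R$ you appeal to ``Sina{\u\i}'s equidistribution argument'' and the ``coupling method'', but the coupling machinery in \cite[Chapter~7]{Chernov-Markarian} does not provide a statement of the form ``images of any unstable curve eventually $u$--cross any prescribed Cantor rectangle''. What it gives instead (see \cite[Lemma~7.90]{Chernov-Markarian}) is a single \emph{magnet} rectangle $R^*$ and a high-density set $\mathfrak{P}^*\subset R^*$ such that whenever a maximal subrectangle of $f^n(R_k)$ meets $\mathfrak{P}^*$, it $u$--crosses $R^*$. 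So the target cannot be chosen freely, and the object being iterated is a rectangle rather than an arbitrary unstable curve.

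The paper closes precisely this gap with two extra ingredients you have omitted. First, \cite[Lemma~7.87]{Chernov-Markarian} produces a \emph{finite} family $R_1,\ldots,R_N$ of positive $\musrb$--measure Cantor rectangles such that every stable or unstable curve of length $\geq\delta/2$ fully crosses at least one $R_i$; this associates to $W^u(x)$ some $R_1$ and to $W^s(y)$ some $R_2$, replacing your Step~1 by a finite combinatorial choice rather than the construction of a single $R$ near $y_0$. Second, the paper applies the magnet rectangle $R^*$ \emph{symmetrically}: mixing of $\musrb$ gives $m,n>0$ with $f^m(R_1)\cap\mathfrak{P}^*\neq\emptyset$ and $f^{-n}(R_2)\cap\mathfrak{P}^*\neq\emptyset$, so an $s$--subrectangle of $R_1$ is sent to $u$--cross $R^*$ while a $u$--subrectangle of $R_2$ is pulled back to $s$--cross $R^*$. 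Since $W^u(x)$ $u$--crosses $R_1$ and $W^s(y)$ $s$--crosses $R_2$, this yields $f^m(W^u(x))\pitchfork f^{-n}(W^s(y))\neq\emptyset$ inside $R^*$, hence $f^{m+n}(W^u(x))\pitchfork W^s(y)\neq\emptyset$. In short, your one-sided scheme (iterate $W^u(x)$ forward to a rectangle tailored to $y$) cannot be completed with the black boxes you cite; the paper's two-sided scheme (iterate both objects towards a common magnet $R^*$, using mixing of $\musrb$) is what actually works.
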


\begin{proof}
This is essentially the discussion in \cite[Section 3.3]{CDLZ-24}.
Fix $\delta>0$ small such that $W^u(x),W^s(y)$ have length at least $\delta$. 
By \cite[Lemma 7.87]{Chernov-Markarian}, there is a finite collection of positive $\musrb$--measure 
rectangles $R_1,\ldots,R_N$ such that any stable and unstable curve of length at least $\delta/2$ 
fully crosses at least one of the rectangles. Without loss of generality, we can assume that 
$W^u(x)$ fully crosses $R_1$ and $W^s(y)$ fully crosses $R_2$.

By  \cite[Lemma 7.90]{Chernov-Markarian}, there is a `magnet' rectangle $R^*$ of positive $\musrb$--measure, 
and a `high density' subset $\mathfrak{P}^* \subset R^*$, satisfying
the following property: if $R_{k,n,i}$ is a maximal rectangle in $f^n(R_k)$ and
$R_{k,n,i} \cap \mathfrak{P}^* \neq\emptyset$ where $n$ is large enough,
then $R_{k,n,i}$ $u$--crosses $R^*$. By the definition of maximal rectangle,
the iterates $f^{-n}(R_{k,n,i}),f^{-n+1}(R_{k,n,i}),\ldots,R_{k,n,i}$ do not intersect $\mathfs S$.
The analogous properties hold for maximal
rectangles $R_{k,-n,i}$ of $f^{-n}(R_k)$ and $s$--crossings of $R^*$.

Now, since $\musrb$ is mixing, there are $m,n>0$ such that $f^m(R_1)\cap \mathfrak{P}^*\neq\emptyset$
and $f^{-n}(R_2)\cap \mathfrak{P}^*\neq\emptyset$. Therefore there are an $s$-subrectangle $R'\subset R_1$
and a $u$-subrectangle $R''\subset R_2$ such that $f^m(R')$ $u$-crosses $R^*$ and 
$f^{-n}(R'')$ $s$-crosses $R^*$. 
This implies that $f^m(W^u(x))\pitchfork f^{-n}(W^s(y))\neq\emptyset$
in a point not belonging to $\bigcup_{n\in\Z}f^n(\mathfs S)$, which concludes the proof.
\end{proof}

Next, we recall the definition of the geometric potential and show how to guarantee that 
it is H\"older continuous with respect to the symbolic metric.
Every  $x\in M$ has a well-defined unstable direction $E^u_x$.

\medskip
\noindent
{\sc Geometric potential:} The {\em geometric potential} of $f$ is the map $\vf:M\to[-\infty,0]$
given by $\varphi(x) = -\log \|df_x|_{E^u_x}\|$.

\medskip
Note that $-\infty\leq \vf(x)\leq -\log\Lambda$. More precisely, \cite[Equation (4.20)]{Chernov-Markarian}
gives that $\vf(x)\approx \log d(f(x),\mathfs S_0)$ when $f(x)\not\in\mathfs S_0$, i.e. the ratio between these
two functions is bounded away from zero and infinity.

\begin{lemma}\label{lemma.SRB=equilibrium}
The potential $\vf$ has zero topological pressure, and $\musrb$ is an equilibrium state for $\vf$.
\end{lemma}

\begin{proof}
The Pesin entropy formula holds for $\musrb$ \cite{KSLP}, hence the pressure
of $\musrb$ is 
$$
P_{\musrb}(\vf)=h_{\musrb}(f)+\int\vf d\musrb=0.
$$
Let $\mu$ be an arbitrary $f$--invariant probability measure. We consider two cases.
If $\mu$ is adapted, then by \cite[Theorem 1.1]{Liao-Qiu} the Ruelle inequality holds, thus
$$
P_\mu(\vf)=h_\mu(f)+\int \vf d\mu\leq 0.
$$
If $\mu$ is not adapted, the estimate $\vf(x)\approx \log d(f(x),\mathfs S_0)$ implies
$$
\int \vf d\mu =\int (\vf\circ f^{-1})d\mu \approx \int\log d(x,\mathfs S_0)=-\infty.
$$
Since $h_\mu(f)\leq h_{\rm top}(f)<\infty$ by \cite[Theorem 2.3]{Baladi-Demers-MME}, it follows that
$$
P_\mu(\vf)=h_\mu(f)+\int\vf d\mu \leq h_{\rm top}(f)+\int\vf d\mu =-\infty.
$$
Therefore $P_{\rm top}(\vf)=0$, with equality for the measure $\musrb$.
\end{proof}

\begin{lemma}\label{lemma.holderregularitypotential}
There are a locally compact countable Markov shift $(\Sigma,\sigma)$ and a H\"older continuous
map $\pi:\Sigma\to M$ satisfying Theorem \ref{Thm-Main} for the measure $\musrb$
such that $\varphi \circ \pi$ is H\"older continuous. 
\end{lemma}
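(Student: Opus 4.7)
The plan is to apply Theorem~\ref{Thm-Main} to $\musrb$ and then to verify that the resulting lift of the geometric potential is H\"older continuous. The measure $\musrb$ is adapted (since $\log d(\cdot,\mathfs S)\in L^1(\musrb)$ by the classical Katok--Strelcyn estimates), ergodic, and hyperbolic; conditions (A1)--(A7) for finite horizon dispersing billiards are verified in \cite{ALP-23}. Theorem~\ref{Thm-Main} thus provides a locally compact countable Markov shift $(\Sigma,\sigma)$ and a H\"older continuous $\hat\pi\colon\Sigma\to\wh M$ satisfying (C0)--(C9). Setting $\pi:=\vartheta\circ\hat\pi\colon\Sigma\to M$, this projection remains H\"older and satisfies $\pi\circ\sigma=f\circ\pi$.

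For the H\"older regularity of $\varphi\circ\pi$, I would combine several ingredients. The uniform lower bound $\|df_x|_{E^u_x}\|\geq\Lambda>1$ from \eqref{equation.Lambdadef} keeps $\log\|df|_{E^u}\|$ bounded away from $0$, so the only source of irregularity of $\varphi$ lies near the grazing set, where $\|df\|$ blows up. By the ALP construction, however, every vertex $v=\Psi_{\hat x}^{p^s,p^u}$ of the symbolic alphabet enforces a lower bound $d(\pi(\underline v),\mathfs S)\geq c(p^u\wedge p^s)^a$ for every $\underline v$ with $v_0=v$, since the Pesin chart radius is dominated by a positive power of $d(\vartheta(\hat x),\mathfs S)$ and the shadowed orbit stays inside the chart. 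On such a neighborhood, (A7) yields H\"older continuity of $df$, property (C3) of Theorem~\ref{Thm-ALP} yields H\"older continuity of the unstable direction $E^u$ along the coding, and $\hat\pi$ is itself H\"older; together these give a H\"older bound for $\varphi\circ\pi$ on each individual cylinder.

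The main obstacle is to promote this to a uniform H\"older bound on $\Sigma$, since the local constant on a cylinder $[v]$ degenerates like $(p^u)^{-a}$ as the chart size shrinks. The resolution exploits the defining property of $\mathrm{NUH}^\#$ that the parameter $q(\hat x)$ decays at rate at most $\varepsilon$ along $\hat f$--orbits: if $\underline v,\underline w\in\Sigma$ satisfy $d(\underline v,\underline w)=e^{-N}$, then $v_n=w_n$ for $|n|<N$, and the corresponding chart sizes $p^{s/u}(v_0)$ are bounded below by $\delta_\varepsilon e^{-\varepsilon N}$ for a pre-chosen small $\varepsilon>0$. Consequently, the blow-up of the local H\"older constant is at most $e^{a\varepsilon N}$, and for $\varepsilon$ chosen small enough relative to the intrinsic H\"older exponent $\alpha_0$ of $\hat\pi$ and of $E^u$, it is absorbed by the decay $e^{-\alpha_0 N}$ coming from H\"older continuity of $\pi$ and of $E^u$. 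This would deliver a genuine H\"older bound for $\varphi\circ\pi$ on $\Sigma$ with exponent $\alpha_0-a\varepsilon>0$.
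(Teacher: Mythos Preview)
Your identification of the main obstacle is exactly right, but the proposed resolution in the last paragraph does not work. The claim that ``the corresponding chart sizes $p^{s/u}(v_0)$ are bounded below by $\delta_\varepsilon e^{-\varepsilon N}$'' is false: the number $p^{s}(v_0)\wedge p^{u}(v_0)$ is an intrinsic quantity attached to the vertex $v_0$ and is entirely independent of how far the two sequences $\underline v,\underline w$ happen to agree. The slow variation of $q$ along $\hat f$--orbits tells you that $p(v_n)$ and $p(v_0)$ differ by at most a factor $e^{\varepsilon|n|}$, but it gives no lower bound on $p(v_0)$ itself. For any vertex $v_0$ with chart size as small as you like there exist pairs $\underline v,\underline w$ with $v_0=w_0$ but $v_1\neq w_1$ (so $N=1$); your inequality would then force $p(v_0)\geq\delta_\varepsilon e^{-\varepsilon}$ uniformly over the alphabet, contradicting the fact that there are vertices with centers arbitrarily close to $\mathfs S$. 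So the blow-up of the local constant cannot be absorbed this way.

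The paper resolves the obstacle by a different mechanism: rather than controlling the chart size a posteriori, it \emph{modifies the ALP construction itself}. In the definition of the parameter $Q$, the exponent in the requirement $Q(x)<d(x,\mathfs S)^{96}$ is replaced by $L:=\max\{96,14/\chi\}$. This forces the charts to be much smaller near $\mathfs S$, so that inside any single chart one has $d(y,z)\leq 2Q(x)<2d(x,\mathfs S)^{L}$, and the singularity blow-up $\|d^2f_x\|\leq C_1 d(x,\mathfs S)^{-3}$ is absorbed \emph{pointwise}:
\[
\|df_y-df_z\|\leq 2C_1\, d(x,\mathfs S)^{-3}d(y,z)\leq C_1\, d(y,z)^{1/2}
\]
with a genuinely uniform constant. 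The factor $\|df_z\|\leq C_1 d(z,\mathfs S)^{-1}$ multiplying the (uniformly H\"older, exponent $\chi/7$) variation of $E^u$ is handled by the same device, using $d(z,\mathfs S)\gtrsim Q(x)^{1/L}=Q(x)^{\chi/14}$. The key point is that the trade-off happens at the level of the construction, by tying $Q(x)$ to a sufficiently high power of $d(x,\mathfs S)$; there is no useful relation between chart size and the agreement length $N$.
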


\begin{proof} 
We need some facts from the theory of dispersing billiards and from the construction in \cite{ALP-23}.
\begin{enumerate}[(1)]
\item There is $C_1>0$ such that $\|df_x\| \leq C_1 d(x,\mathfs S)^{-1}$ and $\|d^2f_x\| \leq C_1 d(x,\mathfs S)^{-3}$,
see \cite[Thm. 7.2]{KSLP}.
\item Since $\|df_x\mid_{E^u_x}\|\geq \Lambda>1$, the spectrum of $f$ is contained in
$\mathbb R\setminus (-\log\Lambda,\log\Lambda)$, hence we fix $0<\chi<\log\Lambda$. 
\item In the construction of \cite{ALP-23}, which was quickly summarized in Section \ref{ss.proof.ALP},
each $x\in\nuh$ is associated to a parameter $Q(x)>0$ and a {\em Pesin chart}
$\Psi_x:[-Q(x),Q(x)]^2\to M$, where $Q(x)< d(x,\mathfs S)^{96}$. \footnote{
The precise exponent in \cite{ALP-23} is $96a/\beta$; for billiards $a=\beta=1$, see \cite[Thm. 7.2]{KSLP}.}
The choice of 96 is arbitrary, and the construction works for any $L\geq 96$. We thus
choose $L:=\max\{96,14/\chi\}$.  
\end{enumerate}
The effect of choosing $L$ large is that $df$ becomes uniformly H\"older continuous inside images of Pesin charts:
if $y,z\in \Psi_x[-Q(x),Q(x)]^2$, then 
$$
\|df_y-df_z\|\leq 2C_1d(x,\mathfs S)^{-3}d(y,z)\leq C_1d(y,z)^{1/2}.
$$

Now recall that $\pi$ and $\un v\in\Sigma\mapsto E^u_{\pi(\un v)}$ are H\"older continuous,
see property (C3) of Theorem \ref{Thm-Main}. 
By \cite{Sarig-JAMS,ALP-23}, their H\"older exponents are at least $\chi/2$ and $\chi/7$
respectively.
Let $\un v,\un w\in \Sigma$ with $v_0=w_0$ and let $y=\pi(\un v),y=\pi(\un w)$,
which belong to the image of the Pesin chart defined by $v_0$.
Since map $\norm{df_x\mid_{E^u_x}}$ is uniformly bounded from below, there is $C_2>0$
such that 
$$
\begin{aligned}
&\ \abs{(\varphi\circ \pi)(\un v)-(\varphi\circ \pi)(\un w)}
\leq C_2\left|\|df_y\mid_{E^u_y}\|-\|df_z\mid_{E^u_z}\|\right|\\
&\leq C_2\left[\norm{df_y-df_z}+\norm{df_z}d(E^u_y,E^u_z)\right].
\end{aligned}
$$
The second term inside the brackets is at most
$$
{\rm const}\times C_1 d(z,\mathfs S)^{-1}d(\un v,\un w)^{\chi/7}\leq C_1 d(\un v,\un w)^{\chi/14}
$$
and so 
$$
\abs{(\varphi\circ \pi)(\un v)-(\varphi\circ \pi)(\un w)}\leq C_1 C_2[d(y,z)^{1/2}+d(\un v,\un w)^{\chi/14}]
\leq 2C_1 C_2 d(\un v,\un w)^{\theta}
$$
where $\theta=\min\{1/2,\chi/14\}$.
\end{proof}

\begin{proof}[Proof of Theorem \ref{thm.billiards}]
Any Lyapunov regular point has one positive and one negative Lyapunov exponent.
Therefore, every adapted measure is hyperbolic. 
If $\mu$ is also ergodic, then there is $\chi>0$ such that $\mu[{\rm NUH}^\#_\chi]=1$. 

Now let $\mu_1,\mu_2$ adapted and ergodic. Take $\chi>0$ small such that
$\mu_i[{\rm NUH}^\#_\chi]=1$ for $i=1,2$. By Lemma \ref{lemma.homoclinic.billiards},
$\mu_1,\mu_2$ are homoclinically related (moreover, the single homoclinic relation of
measures contains $\musrb$). By Theorem \ref{thm.criterion}, it follows that there
is at most one adapted measure of maximal entropy. 

Assume that there is an adapted measure of maximal entropy $\mu$.
Since $\mu\sim\musrb$ and $\musrb$ is mixing \cite{Sinai-billiards}, Proposition~\ref{thm.unique-bernoulli}(2) implies
that $\mu$ is Bernoulli and fully supported.

Now we characterize when $\musrb$ is the unique adapted measure of maximal entropy.
We wish to show that this occurs iff $\frac{1}{p} \log\|df^p_x|_{E^u_x}\|$
is constant for every non grazing periodic point $x$ of period $p$, and that in this case
the constant value is equal to $h_{\rm top}(f)$.

Recall that $\varphi$ is the geometric potential, and
let $\pi:\Sigma\to M$ be the map given by Lemma \ref{lemma.holderregularitypotential}.
Let $\overline{\mu}_{\mbox{\tiny SRB}}$ be the lift of $\musrb$ to $\Sigma$, and 
$\overline\vf=\varphi \circ \pi$ be the lift of $\vf$, which is a H\"older continuous potential. 
Since $\musrb$ is Bernoulli, $\Sigma$ is topologically mixing. Observe that 
$P_{\rm top}(\overline\vf)=0$ and that $\overline{\mu}_{\mbox{\tiny SRB}}$ is an equilibrium 
state for $\overline\vf$, since:
\begin{enumerate}[$\circ$]
\item If $\overline\mu$ is $\sigma$--invariant, letting $\mu=\overline\mu\circ\pi^{-1}$ then by 
property (C2)(b) of Theorem \ref{Thm-Main} we have 
$h_{\overline\mu}(\sigma)=h _{\mu}(f)$ and so by Lemma \ref{lemma.SRB=equilibrium}
it follows that $P_{\overline\mu}(\overline\vf)=P_{\mu}(\vf)\leq 0$.
\item $P_{\overline{\mu}_{\mbox{\tiny SRB}}}(\overline\vf)=P_{\musrb}(\vf)=0$.
\end{enumerate}

Assume that $\musrb$ is a measure of maximal entropy for $f$. Then 
$\overline{\mu}_{\mbox{\tiny SRB}}$ is a measure of maximal entropy for $\sigma$.
By \cite[Theorem 1.1]{Sarig-Lecturenotes}, $\overline\vf$ and $-h_{\rm top}(f)$ are cohomologous
and so $\overline\vf_p(x)=-ph_{\rm top}(f)$ for every $x\in\Sigma$ of period $p$. Therefore
$\frac{1}{p} \log\|df^p_x|_{E^u_x}\|=h_{\rm top}(f)$
for every non grazing periodic point $x$ of period $p$.

Reversely, assume that there is $c>0$ such that $\frac{1}{p} \log\|df^p_x|_{E^u_x}\|=c$
for every non grazing periodic point $x$ of period $p$.
Again by \cite[Theorem 1.1]{Sarig-Lecturenotes}, $\overline{\vf}$ is cohomologous to $-c$.
Since $\overline{\mu}_{\mbox{\tiny SRB}}$ is an equilibrium 
state for $\overline\vf$, it is also an equilibrium state for $-c$, i.e. 
it is a measure of maximal entropy for $\sigma$. This implies that $\musrb$ is a measure
of maximal entropy for $f$. Finally, observe that this necessarily implies that
$$
0=P_{\overline{\mu}_{\mbox{\tiny SRB}}}(\overline\vf)=h_{\overline{\mu}_{\mbox{\tiny SRB}}}(\sigma)+
\int\overline\vf d\overline{\mu}_{\mbox{\tiny SRB}}=h_{\rm top}(f)-c
$$
and so $c=h_{\rm top}(f)$.
\end{proof}

\section{Codimension one partially hyperbolic endomorphisms}\label{Section-ph}

In this section we prove Theorem \ref{thm-phcodone}. Suppose that $f$ satisfies the
hypothesis of Theorem \ref{thm-phcodone}. 
The Ledrappier-Young formula for non-invertible maps, proved in \cite{Lin.shu},
states that if $\mu$ is ergodic then
\begin{equation}\label{LY}
h_\mu(f)= F_\mu(f)-\sum_{\lambda_i<0}\lambda_i\gamma_i,
\end{equation}
where the sum is taken over all Lyapunov exponents $\lambda_i<0$, 
$\gamma_i$ is the measure dimension of $\mu$ along stable manifolds, and 
$F_\mu(f)$ is the folding entropy, defined as follows.
Let $\mathcal E$ be the point partition of $M$.

\medskip
\noindent
{\sc Folding entropy:}  
The {\em folding entropy} of $f$ with respect to $\mu$ is
$$
F_\mu(f)=H_\mu(\mathcal E\vert f^{-1}\mathcal E).
$$ 

\medskip
Clearly $F_\mu(f)\leq \log{\rm deg}(f)$.
Since $h_{\rm top}(f)>\log \operatorname{deg}(f)$, there exists $\chi>0$ such that 
every ergodic measure of maximal entropy has center Lyapunov exponent smaller than $-\chi$. 
Define the set
\[
\mathcal{Z}_{\chi}:=\{x\in M: \log\|df^n_x|_{E^c}\| < -\tfrac{n\chi}{2},\ \forall n\geq 0\}.
\]
We prove that $\mathcal Z_\chi$ carries a uniform measure for every measure of maximal entropy.

\begin{lemma}
\label{lemma-uniformzchi}
There exists $\delta>0$ such that $\mu(\mathcal{Z}_{\chi})> \delta$ for every measure
of maximal entropy $\mu$. 
\end{lemma}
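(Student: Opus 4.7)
The plan is to combine the Ledrappier--Young formula (\ref{LY}) with Pliss's lemma, followed by an ergodic decomposition argument. Since the entropy functional is affine, every ergodic component of an MME is again an MME; therefore it suffices to establish a uniform lower bound $\mu(\mathcal{Z}_\chi) \geq \delta$ valid for every \emph{ergodic} MME $\mu$. The general case then follows by integrating this bound over the ergodic decomposition.

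Fix an ergodic MME $\mu$. The standing hypothesis $h_{\rm top}(f) > \log \deg(f)$, together with $F_\mu(f) \leq \log \deg(f)$ and formula (\ref{LY}), gives
\[
-\lambda_c(\mu)\gamma_c \;=\; h_\mu(f) - F_\mu(f) \;\geq\; h_{\rm top}(f) - \log\deg(f),
\]
where $\lambda_c(\mu)$ is the (negative) center Lyapunov exponent and $\gamma_c \in (0,1]$ is the corresponding stable dimension (any negative exponent can only come from $E^c$, since the unstable directions are uniformly expanded). Using $\gamma_c \leq 1$ yields $\lambda_c(\mu) \leq -\chi$ for the same $\chi$ selected before the lemma. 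Because $E^c$ is one-dimensional and $df$-invariant, the map $g(x):=\log\|df_x|_{E^c}\|$ is a genuine Birkhoff potential, $\log\|df^n_x|_{E^c}\| = S_n g(x)$, and Birkhoff's theorem gives $\int g\,d\mu = \lambda_c(\mu) \leq -\chi$. Moreover $g$ is bounded above by the explicit constant $K := \log \sup_{x\in M}\|df_x\|$, which depends only on $f$.

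The key step is an application of Pliss's lemma. Applied to the Birkhoff sums of $g$ along a $\mu$-typical orbit (with lower bound $-K$ on $-g$, mean $-\lambda_c(\mu) \geq \chi$ of $-g$, and target slope $\chi/2$), it yields that the lower asymptotic density of forward Pliss times
\[
\bigl\{\,n \geq 0 \,:\, S_k g(x) - S_n g(x) \leq -\tfrac{\chi}{2}(k-n) \text{ for all } k \geq n\,\bigr\}
\]
is at least $\delta := \tfrac{\chi/2}{K+\chi}$, a constant depending only on $f$ and $\chi$. By construction, $n$ is a forward Pliss time precisely when $f^n(x) \in \mathcal{Z}_\chi$ (up to the strict/non-strict inequality, absorbed by shrinking $\chi$ slightly in the Ledrappier--Young estimate). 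Applying Birkhoff's theorem to $\ind_{\mathcal{Z}_\chi}$ identifies this lower density with $\mu(\mathcal{Z}_\chi)$, so $\mu(\mathcal{Z}_\chi) \geq \delta$.

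The main subtlety to watch is that $g$ need not be bounded below, since $\|df_x|_{E^c}\|$ could vanish at critical points of the endomorphism $f$. This rules out a direct appeal to Hopf's maximal ergodic theorem (which would require two-sided bounds on $g$ to extract a uniform lower bound on $\mu(\mathcal{Z}_\chi)$); Pliss's lemma, however, uses only the one-sided bound $g \leq K$ and is therefore the right tool.
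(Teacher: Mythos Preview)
Your core argument---reduce to ergodic components, use Ledrappier--Young to force $\lambda_c(\mu)\le -\chi$, apply Pliss to the sequence $a_i=\log\|df_{f^i(x)}|_{E^c}\|$ along a generic orbit, and read off $\mu(\mathcal Z_\chi)$ via Birkhoff for the indicator---is exactly what the paper does. The paper applies Lemma~\ref{lemma-pliss} with $\alpha_1=\inf_{y\in M}\log\|df_y|_{E^c}\|$, $\alpha_2=-\chi$, $\varepsilon=\chi/2$, giving $\delta=\varepsilon/(\alpha_2+\varepsilon-\alpha_1)$.

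Your final paragraph, however, has the direction of the one-sided bound reversed. To obtain times $n$ at which all \emph{forward} averages of $g$ are $\le -\chi/2$ (equivalently, all forward averages of $-g$ are $\ge \chi/2$), Pliss requires an \emph{upper} bound on $-g$, i.e.\ a \emph{lower} bound on $g$. The upper bound $g\le K$ you invoke plays no role; indeed, if $-g$ is bounded below by $-K$ and has mean $\ge \chi$ but is unbounded above, the density of forward Pliss times can be arbitrarily small (all the growth of $\sum(-g)$ can be concentrated in rare large jumps). Your formula $\delta=\tfrac{\chi/2}{K+\chi}$ comes from plugging the lower bound $-K$ where the upper bound should go. In the setting of Theorem~\ref{thm-phcodone} this is not an obstacle: $M$ is closed, there is no singular set, and applying Theorem~\ref{thm.criterion} already forces $f$ to be a local diffeomorphism (so that (A5) holds with $\mathfs S=\emptyset$), whence $x\mapsto\|df_x|_{E^c}\|$ is continuous and strictly positive on $M$ and $\alpha_1=\inf g>-\infty$. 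So the concern you raise does not arise here---but the resolution you propose would not work if it did.
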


The proof of Lemma \ref{lemma-uniformzchi} uses a version of the Pliss Lemma
given in \cite[Lemma 3.1]{crovisier-pujals-2018}, which we state below.

\begin{lemma}[\cite{crovisier-pujals-2018}]
\label{lemma-pliss}
For any $\varepsilon>0$, $\alpha_1 < \alpha_2$ and any sequence $(a_i) \in (\alpha_1, +\infty)^{\N}$
satisfying
$$
\displaystyle \limsup_{n\to +\infty} \frac{a_0 + \cdots + a_{n-1}}{n} \leq \alpha_2,
$$
there exists a sequence of integers $0 < n_1 < n_2 < \cdots $ such that: 
\begin{enumerate}[$\circ$]
\item for any $\ell\geq 1$ it holds
$$
\displaystyle \frac{a_{n_\ell} + \cdots + a_{n-1}}{(n-n_\ell)} \leq \alpha_2 + \varepsilon,
\ \ \forall n>n_\ell;
$$
\item the upper density $\displaystyle \limsup\limits_{\ell\to+\infty} \tfrac{\ell}{n_\ell}$ is larger than
$\delta=\displaystyle \tfrac{\varepsilon}{\alpha_2+ \varepsilon - \alpha_1}$.
\end{enumerate}
\end{lemma}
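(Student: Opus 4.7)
The argument is the classical one due to Pliss, and the plan is to reduce to a zero-mean drift problem and then count via a ``future-supremum'' telescoping. First I would normalize by setting
$$
c_i := a_i - (\alpha_2 + \varepsilon), \qquad \Delta := \alpha_2 + \varepsilon - \alpha_1,
$$
so that $c_i > -\Delta$ and $\limsup_n \tfrac{1}{n}\sum_{i=0}^{n-1} c_i \leq -\varepsilon$. Write $T_n := \sum_{i=0}^{n-1} c_i$; the inequality in item (i) is exactly $T_n \leq T_{n_\ell}$ for $n > n_\ell$, so I would define the \emph{Pliss times}
$$
\mathcal{P} := \{n \geq 0 : T_m \leq T_n \text{ for every } m > n\}
$$
and enumerate $\mathcal{P} = \{n_1 < n_2 < \cdots\}$; condition (i) then holds by construction. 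Because the Ces\`aro hypothesis forces $T_n \to -\infty$, the function $\phi(n) := \sup_{m \geq n} T_m$ is finite and non-increasing, and $\mathcal{P} = \{n : \phi(n) = T_n\}$, which in particular is infinite.

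To bound the density of $\mathcal{P}$ I would telescope $\phi$. If $n \notin \mathcal{P}$ then $T_n < \phi(n+1)$, forcing $\phi(n) = \phi(n+1)$; if $n \in \mathcal{P}$ then
$$
0 \leq \phi(n) - \phi(n+1) = T_n - \phi(n+1) \leq T_n - T_{n+1} = -c_n < \Delta.
$$
Summing over $n \in [0,N)$, only Pliss times contribute, and each contributes strictly less than $\Delta$, so
$$
\phi(0) - \phi(N) = \sum_{n \in \mathcal{P} \cap [0, N)} \bigl(\phi(n) - \phi(n+1)\bigr) < \Delta \cdot \bigl|\mathcal{P} \cap [0, N)\bigr|.
$$
Now $\phi(0) \geq T_0 = 0$; and for any $\eta \in (0,\varepsilon)$ there is $N_\eta$ with $T_m \leq (-\varepsilon + \eta)m$ for every $m \geq N_\eta$. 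Since the affine upper envelope $m\mapsto(-\varepsilon + \eta)m$ is decreasing (this is where $\eta < \varepsilon$ is used), the same bound controls the supremum: $\phi(N) \leq (-\varepsilon + \eta)N$ for $N \geq N_\eta$. Combining, $|\mathcal{P} \cap [0, N)|/N \geq (\varepsilon - \eta)/\Delta$ for all large $N$, and letting $\eta \downarrow 0$ gives $\liminf_N |\mathcal{P} \cap [0, N)|/N \geq \varepsilon/\Delta = \delta$.

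Finally, I would translate this back to the statement. Evaluating the density at the sub-sequence $N = n_\ell + 1$ gives $\ell/(n_\ell + 1) \geq (\varepsilon - \eta)/\Delta$ eventually in $\ell$, hence $\liminf_\ell \ell/n_\ell \geq \delta$, which implies the claimed bound on the upper density. I do not expect any serious obstacle; the only subtle point is exactly the passage from $T_N \leq (-\varepsilon + \eta)N$ (a Ces\`aro bound) to $\phi(N) \leq (-\varepsilon + \eta)N$ (a bound on the future supremum), which relies on monotonicity of the envelope. If the strict inequality ``$>\delta$'' is insisted upon, it can be extracted by observing that $\varepsilon/\Delta$ is strictly increasing in $\varepsilon$ and applying a slight-perturbation argument, though as stated the natural conclusion of the telescoping is $\geq\delta$.
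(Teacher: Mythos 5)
The paper does not prove this lemma: it is quoted verbatim from \cite{crovisier-pujals-2018}, so there is no in-paper argument to compare against. Your proof is the standard Pliss argument --- normalize to $c_i=a_i-(\alpha_2+\varepsilon)$, take the set $\mathcal{P}$ of times where the partial sums $T_n$ dominate their entire future, and telescope the future-supremum $\phi(n)=\sup_{m\ge n}T_m$ --- and it is correct: the identification of condition (i) with $T_n\le T_{n_\ell}$, the finiteness of $\phi$ and infiniteness of $\mathcal{P}$ from $T_n\to-\infty$, the dichotomy $\phi(n)-\phi(n+1)=0$ off $\mathcal{P}$ versus $\phi(n)-\phi(n+1)\le -c_n<\Delta$ on $\mathcal{P}$, and the passage from the Ces\`aro bound $T_m\le(-\varepsilon+\eta)m$ to $\phi(N)\le(-\varepsilon+\eta)N$ all check out.

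One caveat, which you partly anticipate: the argument yields upper density $\ge\delta$, not $>\delta$, and in fact the strict inequality is false as stated. Take $\alpha_1=-1$, $\alpha_2=0$, $\varepsilon=1$ (so $\delta=\tfrac12$) and $a_{2k}=1+2^{-k}$, $a_{2k+1}=-1+2^{-k}$: one checks that the set of times satisfying condition (i) is exactly the odd integers, so every admissible sequence $(n_\ell)$ has $n_\ell\ge 2\ell-1$ and hence $\limsup_\ell \ell/n_\ell\le\tfrac12=\delta$, with equality for the full enumeration. Your suggested repair for strictness --- perturbing $\varepsilon$ and using that $\varepsilon\mapsto\varepsilon/(\alpha_2+\varepsilon-\alpha_1)$ is increasing --- goes the wrong way: decreasing $\varepsilon$ strengthens condition (i) but \emph{weakens} the density bound, while increasing it improves the density but for a weaker condition (i). So the honest conclusion is ``at least $\delta$''. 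This is harmless for the paper: the proof of Lemma \ref{lemma-uniformzchi} only needs a positive lower bound on $\mu(\mathcal{Z}_\chi)$ that is uniform in $\mu$, which $\ge\delta$ (or $\ge\delta'$ for any fixed $\delta'<\delta$) provides.
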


\begin{proof}[Proof of Lemma \ref{lemma-uniformzchi}]
Let $\mu$ be a measure of maximal entropy for $f$ and let $x\in M$ be generic for $\mu$.
For each $i\geq 0$, let  $a_i = \log \|df_{f^i(x)}|_{E^c}\|$, and let
$$\alpha_1 = \inf_{y\in M} \log\|df_y|_{E^c}\|,\ \ \alpha_2 = -\chi,\ \ \varepsilon = \tfrac{\chi}{2}.
$$
By Lemma \ref{lemma-pliss}, we get that $\mu(\mathcal{Z}_{\chi}) > \delta$.
Observe that $\delta$ does not depend on $\mu$.
\end{proof}

\begin{lemma}\label{lemma-uniformstable}
There exists $\ell_0>0$ such that every $x\in \mathcal{Z}_{\chi}$ has a local stable manifold of size larger
than $\ell_0$. 
\end{lemma}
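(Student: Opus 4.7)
The plan is to invoke the Hadamard--Perron stable manifold theorem in its uniform form. Since the contraction estimate $\|df^n_x|_{E^c}\| < e^{-n\chi/2}$ defining $\mathcal{Z}_\chi$ holds at every forward iterate and with a rate independent of $x$, we are in a genuinely \emph{uniform} hyperbolic situation along $E^c$, rather than a Pesin-like non-uniform one; this is exactly what produces stable manifolds of uniform size.

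First I would set up adapted charts. By compactness of $M$ and $C^{1+}$-regularity of $f$, there are uniform constants $\rho_0>0$, $K>1$ such that at every $y\in M$ the exponential map $\exp_y$ is $K$-bi-Lipschitz on $B_y[\rho_0]$, and $df$ is uniformly bounded with uniformly Hölder modulus. Since $E^c$ is continuous (hence uniformly continuous), I fix a continuous family of narrow cones $\mathcal{C}^c_y$ around $E^c_y$ and a complementary family $\mathcal{C}^u_y$ inside the unstable cone, uniformly transverse to $\mathcal{C}^c_y$. Working in the chart at $f^n(x)$, the dynamics $\exp_{f^{n+1}(x)}^{-1}\circ f\circ\exp_{f^n(x)}$ is $C^{1+}$-close on a small ball to its linear part $df_{f^n(x)}$, which maps $E^c_{f^n(x)}$ into $E^c_{f^{n+1}(x)}$ with norm $<e^{-\chi/2}\cdot e^{-n\chi/2}/e^{-(n+1)\chi/2}$-style uniform bounds and expands $\mathcal{C}^u_{f^n(x)}$ uniformly.

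Then I would run the standard graph transform. For each $\ell\leq \ell_0$, the space $\mathcal{G}_n(\ell)$ of $1$-Lipschitz graphs over $B[\ell]\subset E^c_{f^n(x)}$ contained in $\mathcal{C}^c_{f^n(x)}$ is invariant under the backward graph transform induced by $f\colon \mathcal{G}_{n+1}(\ell)\to \mathcal{G}_n(\ell)$; by the uniform contraction along $E^c$ (rate at most $e^{-\chi/2}$) plus the Hölder remainder, this graph transform is a uniform contraction on $\mathcal{G}_n(\ell)$ provided $\ell_0$ is sufficiently small in terms of $\chi$, $\sup\|df\|$, the Hölder modulus of $df$, and the angle between $E^c$ and the unstable cone. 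Taking the limit yields a unique graph $W^s_{\rm loc}(x)$ of radius $\ell_0$, tangent to $E^c_x$, whose forward images under $f$ shadow the orbit of $x$ at rate $e^{-n\chi/2}$. Since every ingredient fixing $\ell_0$ is uniform on $M$, the bound $\ell_0$ is independent of $x\in\mathcal{Z}_\chi$.

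The main point to check is that non-invertibility of $f$ causes no trouble: local \emph{stable} manifolds only require forward iterates, so inverse branches never enter the construction, and the argument proceeds exactly as in the invertible case. The only technical hurdle is to ensure the invariance of the center cone $\mathcal{C}^c_{f^n(x)}$ along the forward orbit with constants independent of $n$; this follows because the unstable cone is $df$-invariant by the partial hyperbolicity hypothesis and the forward images of $\mathcal{C}^c$ are controlled by duality via the graph transform, again uniformly on $M$.
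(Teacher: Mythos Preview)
Your approach via the Hadamard--Perron graph transform is correct and would yield the result, but it takes a different route from the paper's. The paper exploits the hypothesis $\dim E^c = 1$: a one-dimensional continuous subbundle is automatically locally integrable, so there is already a continuous family of center curves $\gamma^c_x$ of uniform length tangent to $E^c$. One then checks directly, using only the uniform continuity of $y\mapsto df_y$ on the projective bundle, that for $x\in\mathcal Z_\chi$ the subcurve $W^c(x,\ell_0)\subset\gamma^c_x$ of length $\ell_0$ satisfies $|f^n(W^c(x,\ell_0))|<e^{-n\chi/4}\ell_0$ for all $n\ge 0$, and hence is a local stable manifold. No graph transform, no charts, no cone-invariance bookkeeping are needed. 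Your argument is more general---it would go through for higher-dimensional $E^c$---but is correspondingly heavier; the paper's argument is a few lines because in codimension one the candidate stable manifold is handed to you by local integrability, and only the elementary length-contraction estimate remains.
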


\begin{proof}
Below, we denote the length of a curve $\gamma$ by $|\gamma|$.
By uniform continuity of $df$ in the projective tangent bundle, there exists a small center cone field
$\mathcal{C}^c$ and a constant $\delta_0>0$ such that:
\[
\left(\begin{array}{l}d(x,y)<\delta_0\\
v\in \mathcal{C}^c_y\textrm{ unitary}\end{array}\right)\ \Longrightarrow\
 \left| \log\|df_x|_{E^c}\| - \log\|df_y v\|\right| < \tfrac{\chi}{4}.
\]
The inequality above implies that  $\|df_y v\| < e^{\frac{\chi}{4}} \|df_x|_{E^c}\|$.
The bundle $E^c$ is locally integrable, i.e. there is $\delta_1>0$ and a continuous family
of curves $\gamma^c_x$ centered at $x$ and tangent to $E^c$ such that $|\gamma^c_x|>\delta_1$ for every $x$.
Take $\ell_0:= \min\{\delta_0, \delta_1\}$, and let $W^c(x,\ell_0)$ be the subset of $\gamma^c_x$ 
centered at $x$ with length $\ell_0$.

We claim that if $x\in \mathcal{Z}_{\chi}$ then $W^c(x,\ell_0)$ is contained in the stable manifold of $x$.
For that, fix $x\in \mathcal{Z}_{\chi}$ and let $\gamma(t)$ be a parametrization of $W^c(x,\ell_0)$. We have 
\[
\|df_{\gamma(t)}\gamma'(t)\| < e^{\frac{\chi}{4}} \|df_x|_{E^c}\| \|\gamma'(t)\|
\]
and so  $|f(W^c(x,\ell_0))|< e^{-\frac{\chi}{4}}\ell_0< \delta_0$. By induction, it follows that
$|f^n(W^c(x,\ell_0))|< e^{-\frac{n\chi}{4}}\ell_0$ for all $n\geq0$, thus proving that 
$W^c(x,r_0)$ contracts exponentially fast and so it is a local stable manifold.
\end{proof}

\begin{proof}[Proof of Theorem \ref{thm-phcodone}]

Let $f$ be a $C^{1+}$ codimension one partially hyperbolic endomorphism with
$h_{\rm top}(f)>\log \deg(f)$. Let $\chi>0$ and $\mathcal Z_\chi$ as above.

\medskip
\noindent
{\sc Claim:} There is $\ell_0>0$ such that every $x\in\mathcal{Z}_\chi$
has local stable and unstable manifold of size larger than $\ell_0$ and their angle at $x$
is larger than $1/\ell_0$. 

\begin{proof}[Proof of claim.]
By Lemma \ref{lemma-uniformstable}, every $x\in\mathcal{Z}_\chi$
has a local stable manifold of uniform size.
Since $f$ is partially hyperbolic, every $\hx\in \hM$ has a local unstable manifold of uniform size
(note: this unstable manifold might not be tangent to $E^u$).
By domination, the angle between center and any unstable direction is uniformly bounded
from below. The claim is proved.
\end{proof}

By the claim, $f$ has at most finitely many homoclinic classes that intersect $\mathcal{Z}_{\chi}$
(if not, a subsequence would accumulate and hence be the same homoclinic class for large indices).
By Lemma \ref{lemma-uniformzchi} and Theorem~\ref{thm.criterion}, it follows that
$f$ has at most finitely many measures of maximal entropy.

Suppose now that for any $C^1$ curve $\gamma$ tangent to the unstable cone 
the union $\bigcup_{n\geq 0} f^n(\gamma)$ is dense in $M$. We wish to show 
that $f$ has a unique measure of maximal entropy. Let $\mu_1,\mu_2$
be two ergodic measures of maximal entropy. Let $\hx_1$ be a generic point for $\hmu_1$
such that $\vt[\hx_1]\in\mathcal{Z}_\chi$, and $\hx_2$ a generic point for $\hmu_2$.
The local unstable manifold $W^u(\hx_2)$ is a $C^1$ curve tangent to the unstable cone, hence
$\bigcup_{n\geq 0} f^n(W^u(\hx_2))$ is dense in $M$ and so there is $k>0$ such that 
$ f^k(W^u(\hx_2))\pitchfork W^s(\hx_1)\neq\emptyset$. Letting $m=-k$, $n=0$ and
$\hx_3=\hf^k(\hx_2)$, this means that $f^{n-m}(W^u(\hf^m(\hx_3)))\pitchfork W^s(f^n(\hx_1))\neq\emptyset$,
which in turn means that $\mathfs V^u(\hx_3)$ and $\mathfs V^s(\hx_1)$ are transversal.
Thus $\hmu_2 \preceq \hmu_1$. By symmetry,  $\hmu_1\preceq \hmu_2$ and so
$\hmu_1,\hmu_2$ are homoclinically related. By Theorem~\ref{thm.criterion},
$\hmu_1=\hmu_2$, hence $\mu_1=\mu_2$.
\end{proof}

\begin{remark}
In an ongoing work, Buzzi, Crovisier and Sarig introduce the notion of
\emph{strong positive recurrence (SPR)} for diffeomorphisms.  Using coding techniques,
they prove that SPR implies that hyperbolic measures of maximal entropy are exponential
mixing for H\"older potentials, up to the period of the system. This notion is characterized
by the existence of a Pesin block with uniform measure for any ergodic measure with ``large enough'' entropy.
It seems a hard task to prove the existence of such uniform Pesin blocks.
For surface diffeomorphisms with positive topological entropy, Buzzi, Crovisier and Sarig 
prove a criterion for SPR in terms of the continuity of Lyapunov exponents, which 
was established in \cite{BCS-2}. 

We believe that the same technique of SPR can be used for endomorphisms.
In our partially hyperbolic setting, we also have continuity of the central exponent,
since the center bundle is one dimensional. In particular, it should not be too difficult to construct uniform Pesin blocks for measures having ``large enough'' entropy in this setting. Therefore,
we believe that the measures of maximal entropy in this setting are exponentially mixing
up to the period of the system. 
\end{remark}

\section{Non-uniformly hyperbolic volume-preserving endomorphisms}\label{Section-ACS}

In this section we prove Theorems~\ref{thm.unique.ACS} and \ref{thm.ergodicity}.
The condition \eqref{eq.pastexpansion} introduced in \cite{ACS} is similar to a property 
considered in random dynamics, see \cite{Dolgopyat-Krikorian,ZHANG}.
The ideas below related to estimating the sizes of unstable manifolds and how their angles
vary go back to the work of Dolgopyat and Krikorian for random dynamical systems~\cite[Corollary 4 and Section 11]{Dolgopyat-Krikorian}, see also \cite[Section 3.2]{ZHANG}. 
In our context, the pre-images correspond to the ``random'' part of the dynamics, and so we
apply the ideas of ~\cite{Dolgopyat-Krikorian} to the disintegrated measures of the Lebesgue measure
on the pre-images.

We start introducing some notation from \cite{ACS}. In this section, $S$ denotes a smooth closed surface. 
For $x\in S$, $v\in T_x S\setminus\{0\}$ and $N\in \mathbb{N}$ define 
$$
I(x,v,f^N)=\sum_{f^N(y)=x} \frac{\log \norm{(df^N_y)^{-1}v}}{\abs{\det(df^N_y)}}\,,
$$
and let 
$$
C(f)=\sup_{n\in \N}\frac{1}{n}\left(\inf_{x\in S\atop{v\in T_xS,\|v\|=1}} I(x,v,f^n)\right)=
\lim_{n\to\infty}\frac{1}{n}\left(\inf_{x\in S\atop{v\in T_xS,\|v\|=1}} I(x,v,f^n)\right),
$$
see \cite[Corollary 2.1]{ACS}. We fix a reference volume probability measure $\mu$ on $S$ and let
$$
\mc U=\{f:S\to S:f\text{ is a $C^1$ endomorphism that preserves }\mu\text{ and }C(f)>0\}.
$$
For every $f\in\mathcal U$ the measure $\mu$ is hyperbolic, with one positive and
one negative Lyapunov exponent \cite[Theorem A]{ACS}.
In the remaining of this section, we fix $f\in \mc U$ of class $C^{1+}$.

As introduced in Sections \ref{Section-natural-extension} and \ref{ss.proof.ALP},
let $\wh{\mu}$ be the lift of $\mu$ to the natural extension $\wh S$ and $(\wh{df}^{(n)}_{\hx})_{n\in\Z}$
be the invertible cocycle over $\wh f$ induced by $\left(df^n_x\right)_{n\geq 0}$.
By the Oseledets theorem, $\hmu$--a.e. $\hx\in\wh S$ has stable/unstable direction
$E^{s/u}_{\hx}$. 

If $\mathcal E$ is the point partition of $S$,
then $\vartheta^{-1}\mathcal E$ is a measurable partition on $\wh S$, and so 
by the Rokhlin disintegration theorem we can write
$$
\wh\mu=\int \mu_x^-d\mu(x)
$$ 
where $\mu_x^-$ is a probability measure on $\vartheta^{-1}(x)$, the set of pre-orbits
of $x$. By \cite[Proposition~2.1]{ACS}, $\mu^-_x$ is the unique measure on $\vt^{-1}(x)$
such that
$$
\mu^-_x\{\hx\in \vartheta^{-1}(x):x_{-n}=z\}=\abs{\det df^n_z}^{-1}.
$$
Observe that $x\in M\mapsto \mu_x^-$ is globally defined and continuous.

 With respect to the above disintegration, the condition $C(f)>0$ is equivalent to existing $N,c>0$ such that
$$
I(x,v,f^n)=\int \log \norm{\wh{df}^{-n}_{\hx} v}d\mu^-_x(\hx)\geq n c>0,\ \ \forall n\geq N,\forall x\in S,\forall v\in T_xS \text{ unitary},
$$
see \cite[Section 2.3]{ACS}.

\subsection{Uniform estimates of unstable manifolds}\label{ss.unstable.manifolds}

Observe that, as rewritten above, $C(f)>0$ means that we have
an expansion for the past, on the average, for every unitary direction $v$.
We will use this condition to obtain properties about the {\em unstable} directions/manifolds.
The first property is that for a generic $x\in S$ the unstable direction varies as we pre-iterate $x$
by the different inverse branches. Below, we denote the projective space of $TxS$ by $\mathbb P T_xS$.

\begin{theorem}\label{thm.unstable.angle}
There exist $A,\beta>0$ such that the following holds for $\mu$--a.e. $x\in S$:
for any $E\in \mathbb{P}T_xS$ and any $\eta>0$ it holds $\mu^-_x\lbrace \hx: \angle (E,E^u_{\hx})<\eta \rbrace\leq A \eta^\beta$.
\end{theorem}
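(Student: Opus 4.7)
My plan is to prove the stronger (and equivalent, by Chebyshev) statement that for some small $\beta>0$, the function
\[
h_\beta(x,E):=\int\angle(E,E^u_{\hx})^{-\beta}\,d\mu^-_x(\hx)
\]
is uniformly bounded on $\mathbb{P}TS$. Once this is established, Chebyshev's inequality yields $\mu^-_x\{\hx:\angle(E,E^u_{\hx})<\eta\}\leq \eta^\beta\,h_\beta(x,E)\leq A\eta^\beta$ with $A:=\sup h_\beta$. The approach adapts the Dolgopyat--Krikorian scheme \cite{Dolgopyat-Krikorian} from random dynamics: the measure $\mu^-_x$ plays the role of the random product measure on the pre-orbit tree, and $E^u_{\hx}$ is the stationary direction of the resulting Markov chain in projective space.

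The engine is a transfer-operator inequality exploiting the Markovian structure of $\mu^-_x$: conditioning on the first preimage gives $\mu^-_x=\sum_{f(z)=x}|\det df_z|^{-1}(\hf)_*\mu^-_z$, and $E^u$ propagates by $E^u_{\hx}=df_{x_{-1}}E^u_{\hf^{-1}\hx}$. Applying to $\wh{df}^{-N}_{\hx}$ the two-dimensional distortion identity $\sin\angle(Au,Av)=(|\det A|/\|Au\|\|Av\|)\sin\angle(u,v)$ and integrating, one obtains after $N$ steps an inequality of the form
\[
h_\beta(x,E)\leq C_N^\beta\int \|\wh{df}^{-N}_{\hx}v_E\|^{-\beta}\,h_\beta\bigl(x_{-N},\wh{df}^{-N}_{\hx}E\bigr)\,d\mu^-_x(\hx)+C_0,
\]
where $v_E$ is a unit vector in $E$, $C_N^\beta$ collects bounded distortion factors (controlled by $\|df\|_\infty$, $\|df^{-1}\|_\infty$, and $|\det df|_{\min}$), the exponent of $\|\wh{df}^{-N}v_E\|^{-\beta}$ arises from the telescoping of the one-step distortion factors over $N$ iterations, and $C_0$ absorbs the large-angle contribution where the small-angle identity must be truncated.

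The decisive analytic step converts the mean-positivity \eqref{eq.pastexpansion} into an $L^{-\beta}$ bound: the random variable $Y_N:=\log\|\wh{df}^{-N}_{\hx}v_E\|$ has mean at least $Nc$ (by hypothesis) and absolute value bounded by $NK$ where $K=\log\max(\|df\|_\infty,\|df^{-1}\|_\infty)$, so Taylor expansion of $t\mapsto e^{-\beta t}$ yields, uniformly in $(x,v_E)$,
\[
\int\|\wh{df}^{-N}_{\hx}v_E\|^{-\beta}\,d\mu^-_x(\hx)\leq 1-\beta Nc+O(\beta^2 N^2).
\]
Fixing $N$ large and then $\beta>0$ small enough that $C_N^\beta(1-\beta Nc+O(\beta^2 N^2))<1$, the iterated inequality becomes a genuine contraction $\sup h_\beta\leq\rho\sup h_\beta+C_0'$ with $\rho<1$, giving $\sup h_\beta\leq C_0'/(1-\rho)<\infty$.

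The main obstacle is the careful bookkeeping in the $N$-step iteration: the products of determinants, pullback norms, and forward expansions along $E^u$ must be organized to produce precisely $\|\wh{df}^{-N}v_E\|^{-\beta}$ modulo bounded corrections, using in an essential way that $\sum_{f(z)=x}|\det df_z|^{-1}=1$ (volume preservation). A secondary delicacy is the coordinated calibration of $(N,\beta)$ so that the inflation $C_N^\beta\sim e^{O(\beta N)}$ coming from the bounded distortion constants is dominated by the Jensen gain $\beta Nc$; this is possible because $c>0$ and the inflation constants are harmless in the limit $\beta\to 0$.
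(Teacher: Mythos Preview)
Your transfer-operator scheme is a genuinely different route from the paper's, and the first analytic step (the negative-moment bound $\int\|\wh{df}^{-N}_{\hx}v_E\|^{-\beta}d\mu^-_x\leq 1-\beta Nc+O(\beta^2N^2)$) is exactly what the paper proves as its Lemma preceding the theorem. The difficulty is in the second step, where the contraction you claim does not close.

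From the two-dimensional angle identity you quote, the exact recursion is
\[
\sin^{-\beta}\angle(E,E^u_{\hx})=\left(\frac{\|A|_{A^{-1}E}\|\,\|A|_{E^u_{\hf^{-N}\hx}}\|}{|\det A|}\right)^{\beta}\sin^{-\beta}\angle\bigl(A^{-1}E,E^u_{\hf^{-N}\hx}\bigr),\qquad A=df^N_{x_{-N}}.
\]
The factor $\|A|_{E^u_{\hf^{-N}\hx}}\|/|\det A|$ depends on the \emph{full} past (through $E^u$), so to separate variables you must bound it; in two dimensions the sharp bound is $\|A|_{V}\|/|\det A|\leq \|A^{-1}\|$, giving $C_N\leq L^N$ with $L=\sup\|df^{-1}\|$. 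The product you need to make $<1$ is therefore
\[
L^{N\beta}\bigl(1-\beta Nc+O(\beta^2N^2)\bigr)=1+N\beta(\log L-c)+O(\beta^2N^2),
\]
and this is $<1$ only if $c>\log L$. But the hypothesis gives only $c=C(f)>0$, and in fact one always has $c\leq \log L$ (the averaged backward expansion cannot exceed the supremum), with strict inequality generically. So the inflation is \emph{not} ``harmless as $\beta\to0$'': it is first order in $\beta$ with a coefficient that dominates the Jensen gain. The additive $C_0$ does not help either, since the recursion coming from the exact sine identity has no truncation error; and even if you manufacture one, you still need $\rho<1$, which fails. (There is also the unaddressed issue of \emph{a priori} finiteness of $\sup h_\beta$, but that is secondary.)

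The paper sidesteps this by never closing a self-referential inequality on $h_\beta$. Instead it uses the negative-moment bound to get a large-deviation estimate for the event $\{\|\wh{df}^{-\ell}_{\hx}v\|<e^{\ell\overline\chi/s}\}$, and then, by an explicit upper-triangular representation of $\wh{df}^{-n}_{\hx}$ in the moving frame $E_n\oplus E_n^\perp$, produces a concrete cone radius $\eta(\hx)\asymp\bigl(\sum_{\ell\geq 0}\|\wh{df}^{-\ell}_{\hx}v\|^{-2}\bigr)^{-1}$ such that $\angle(E,E^u_{\hx})>\eta(\hx)$. Bounding $\mu^-_x\{\eta(\hx)<\eta\}$ then reduces to a tail estimate on the last ``bad'' time $\ell(\hx)$, which the large-deviation bound handles with an explicit H\"older exponent $\beta=(\chi-\overline\chi)/(2\log L)$. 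This argument never compares $c$ to $\log L$; the factor $L$ enters only as a polynomial loss absorbed by the exponential decay.
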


This is a version of~\cite[Corollary 4, item (b)]{Dolgopyat-Krikorian} in our setting.
The second property is a uniform estimate on the sizes and geometry of unstable manifolds on measure, 
independent of $x$. Recall that $\hmu$--a.e. $\hx\in\wh S$ has stable/unstable local curves $W^{s/u}(\hx)$.
We wish to control the curvature of $W^u(\hx)$, but since it might not be $C^2$ we 
consider the following notion.
Given $x\in S$, recall that $\exp{x}:T_xS\to S$ is the exponential map of $S$ at $x$.
Given $v\in T_xS$, we can identify $T_xS=\mathbb R v\oplus \mathbb Rv^\perp$ with $\mathbb R^2$.
Let $L>0$.

\medskip
\noindent
{\sc $L$--Lipschitz graph:}
A curve $\gamma:[-a,a]\to M$ is called a {\em $L$--Lipchitz graph centered at $\gamma(0)$} 
if for the identification $T_xS=\mathbb R \gamma'(0)\oplus \mathbb R\gamma'(0)^\perp\cong \mathbb R^2$
there exists a $L$--Lipschitz function $F:[-b,b]\to \mathbb R$ such that
$\gamma[-a,a]=\exp{x}\{(t,F(t)):t\in [-b,b]\}$.

\medskip
We denote the length of $W^{s/u}(\hx)$ by $|W^{s/u}(\hx)|$.
We prove the following result.

\begin{theorem}\label{thm.big.unstable}
For every $\sigma>0$, there is $\ell_0>0$ such that for $\mu$--a.e. $x\in S$:
$$
\mu^-_x\{\hx:W^u(\hx)\text{\emph{ is a $1$--Lipschitz graph with }}|W^u(\hx)|>\ell_0\}>1-\sigma.
$$
\end{theorem}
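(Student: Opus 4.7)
The plan is to derive the theorem from classical Pesin theory by showing that a $\mu_x^-$–uniformly large set of pre-orbits $\hx\in\vt^{-1}(x)$ belongs to a common ``Pesin block'' along which the Hadamard–Perron graph transform produces an unstable manifold of uniform size. Since $S$ is closed and $f$ is $C^{1+}$ on all of $S$, the singular-set factor in the Pesin parameters is trivial. The size and graph-regularity of $W^u(\hx)$ will therefore be controlled by just two quantities along the backward orbit of $\hx$: the angle $\angle(E^s_{\hf^{-n}(\hx)},E^u_{\hf^{-n}(\hx)})$, and the past contraction rate $\|\wh{df}^{-n}_{\hx}|_{E^u_{\hx}}\|$. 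Once both are uniformly controlled for $n\ge 0$, the standard Pesin–Hadamard–Perron construction outputs $W^u(\hx)$ as a $1$-Lipschitz graph of length at least some $\ell_0$ depending only on the bounds.

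First I would handle the angle. Since $E^s_{\hx}$ is determined by the forward orbit, it depends only on $x=\vt(\hx)$, so we may write $E^s_x$. Applying Theorem~\ref{thm.unstable.angle} with $E=E^s_x$ and a suitably small $\eta_0=\eta_0(\sigma)$ gives a set $G_x^{(1)}\subset\vt^{-1}(x)$ of $\mu_x^-$–measure $>1-\sigma/3$ on which $\angle(E^s_x,E^u_{\hx})\ge \eta_0$. To propagate this to the whole backward orbit, use $\hf^{-1}$–invariance of $\wh\mu$, which pushes the disintegrations coherently: a standard Borel–Cantelli argument upgrades the angle bound (with slightly smaller $\eta_0$) to hold at every $\hf^{-n}(\hx)$, $n\ge 0$, on a set $\wt G_x^{(1)}$ of measure $>1-\sigma/2$.

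Next I would handle the past contraction along $E^u$. The condition $C(f)>0$ gives, for every $x$ and every unit $v\in T_xS$,
\[
\int_{\vt^{-1}(x)}\log\|\wh{df}^{-n}_{\hx} v\|\,d\mu^-_x(\hx)\ge nc,\qquad n\ge N,
\]
with constants $c,N$ independent of $x$. Decomposing $v=v^s+v^u$ in $E^s_{\hx}\oplus E^u_{\hx}$ and using the uniform angle from Step~1, this integrated inequality forces, up to a bounded loss, the same linear lower bound for $\log\|\wh{df}^{-n}_{\hx}|_{E^s_{\hx}}\|$; by conservativity ($|\det df|\equiv 1$) and the angle control, $\log\|\wh{df}^{-n}_{\hx}|_{E^u_{\hx}}\|\le -\log\|\wh{df}^{-n}_{\hx}|_{E^s_{\hx}}\|+O(1)$, so the mean of $-\log\|\wh{df}^{-n}_{\hx}|_{E^u_{\hx}}\|$ is at least $nc-O(1)$. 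Applying a Pliss-type argument in the $\mu_x^-$–variable (using that $\|df^{\pm 1}\|$ is bounded on $S$, so the individual summands are bounded) yields a set $G_x^{(2)}\subset\wt G_x^{(1)}$ of measure $>1-\sigma$ on which
\[
\|\wh{df}^{-n}_{\hx}|_{E^u_{\hx}}\|\le C_0 e^{-n\lambda},\qquad\forall\, n\ge 0,
\]
with uniform $C_0,\lambda>0$ depending only on $\sigma,\eta_0$ and $c$.

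Finally, on $G_x^{(2)}$ the graph transform construction (carried out in the natural extension as in Section~\ref{ss.invariant.sets}, or equivalently as in~\cite{ALP-23} applied to the Pesin block $\Lambda_{\lambda,\ve,C_0^{-1}}$) produces $W^u(\hx)$ as a $1$-Lipschitz graph in a Pesin chart of radius $\ge\ell_0=\ell_0(\sigma)$, giving $|W^u(\hx)|>\ell_0$ as required. The main obstacle is Step~2: the condition $C(f)>0$ only supplies an \emph{averaged} backward expansion for \emph{fixed} directions $v\in T_xS$, whereas we need a \emph{pointwise uniform in $n$} past contraction along the \emph{moving} direction $E^u_{\hx}$. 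The two uses of the angle bound (first as input to Step~2, then as a separate Pesin parameter) are what bridge this gap, and getting the Pliss-type argument to produce a set of uniform $\mu_x^-$–measure — rather than a set of $\hmu$–measure — is the delicate point, which is exactly where the disintegration viewpoint of~\cite{Dolgopyat-Krikorian} is used.
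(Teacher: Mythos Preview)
Your three-step outline (angle control, past contraction along $E^u$, graph transform) matches the paper's architecture, and Step~1 is essentially the paper's approach to condition~(3) in Lemma~\ref{lemma.unstable.pesin}. However, Step~2 has a genuine gap, and your handling of it differs from the paper's in an essential way.

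First, a factual error: $|\det df|\not\equiv 1$. These are volume-preserving \emph{endomorphisms} of degree $>1$; what holds is $\sum_{f(y)=x}|\det df_y|^{-1}=1$, and indeed $|\det df|>1$ throughout. The inequality you want still points the right way, but your stated reason is wrong.

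The real issue is the passage from the integrated lower bound
\[
\int \log\|\wh{df}^{-n}_{\hx}v\|\,d\mu^-_x(\hx)\ge nc
\]
for fixed directions $v$, to a pointwise bound $\|\wh{df}^{-n}_{\hx}|_{E^u_{\hx}}\|\le C_0 e^{-n\lambda}$ for all $n\ge 0$, valid on a set of $\mu^-_x$--measure $>1-\sigma$ uniformly in $x$. A Pliss argument applied along each backward orbit gives, for each $\hx$, a positive \emph{density of times} at which such a bound starts; it does not give a large $\mu^-_x$--\emph{measure} of $\hx$ for which time $0$ is good. These are different statements, and the log-mean bound alone is too weak to bridge them.

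The paper resolves this with two ingredients you are missing. First, Lemma~\ref{lemma.integral.bound} upgrades the log-mean bound to an exponential moment bound
\[
\int \|\wh{df}^{-n}_{\hx}v\|^{-s}\,d\mu^-_x(\hx)< Ce^{-n\chi},
\]
using the multiplicative (Markov-chain) structure of $\mu^-_x$. This immediately yields, via Markov's inequality and Borel--Cantelli, a large deviation estimate for \emph{fixed} directions (Corollary~\ref{cor.large.dev}). Second, to pass from a fixed direction to the moving direction $E^u_{\hx}$, the paper does \emph{not} go through the splitting $E^s\oplus E^u$ and the determinant as you propose; instead, the Claim inside the proof of Lemma~\ref{lemma.unstable.pesin} shows that failure of the contraction bound on $E^u_k(\hx)$ forces either a small angle between $E^u_{k+n}(\hx)$ and the image of an arbitrary reference direction, or a failure of the exponential expansion of that fixed reference direction. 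Both alternatives have measure controlled by Theorem~\ref{thm.unstable.angle} and Lemma~\ref{lemma.integral.bound} respectively, and summing over $n,k$ gives the result. Your decomposition $v=v^s+v^u$ circularly presupposes uniform control on $E^s$ and $E^u$ to extract the dominant term, which is exactly what is being proved.
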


To prove the above theorems, we start with some preliminary results.

\begin{lemma}\label{lemma.integral.bound}
For all $s>0$ small, there are $\chi,C>0$ depending on $s$
such that
$$
\int \norm{\wh{df}^{-n}_{\hx} v}^{-s} d\mu^-_x( \hx)<C e^{-n\chi},
\ \ \forall n>0,\forall x\in S,\forall v\in T_xS \text{\emph{ unitary}}.
$$
\end{lemma}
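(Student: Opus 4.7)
The plan is to reinterpret the integral as the $n$-th iterate of a positive transfer operator on the unit tangent bundle, and then exploit the hypothesis $C(f)>0$ through a Taylor expansion at $s=0$.

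Using $\mu^-_x\{\hx:x_{-n}=y\}=|\det df^n_y|^{-1}$ from \cite{ACS} and the fact that $\wh{df}^{-n}_{\hx}v=(df^n_{x_{-n}})^{-1}v$ depends on $\hx$ only through $x_{-n}$, I would first rewrite
$$
\int \|\wh{df}^{-n}_{\hx}v\|^{-s}\,d\mu^-_x(\hx)=\sum_{f^n(y)=x}\frac{\|(df^n_y)^{-1}v\|^{-s}}{|\det df^n_y|}=:\mathcal{L}_s^n\mathbf{1}(x,v),
$$
where $\mathcal{L}_s\phi(x,v):=\sum_{f(y)=x}\tfrac{\|(df_y)^{-1}v\|^{-s}}{|\det df_y|}\,\phi\!\left(y,\tfrac{(df_y)^{-1}v}{\|(df_y)^{-1}v\|}\right)$ is a transfer-type operator on the unit tangent bundle. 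A chain-rule calculation (the unitary preimage direction is pushed back at each step) verifies the iteration formula for $\mathcal{L}_s^n$.

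The heart of the proof is a uniform one-step estimate: for every $s>0$ sufficiently small,
$$
\sup_{(x,v)}\mathcal{L}_s^N\mathbf{1}(x,v)\;\leq\;1-\tfrac{sNc}{2}.
$$
By compactness of $S$ and $f\in C^{1+}$ there is $L_1>0$ with $\|df\|\leq L_1$, whence $\phi_N(\hx):=\log\|\wh{df}^{-N}_{\hx}v\|$ satisfies $|\phi_N|\leq NK$ with $K=\log L_1$. The pointwise Taylor bound $e^{-s\phi_N}\leq 1-s\phi_N+\tfrac{s^2}{2}e^{sNK}\phi_N^2$, integrated against $\mu^-_x$ together with the standing hypothesis $I(x,v,f^N)\geq Nc$, gives
$$
\mathcal{L}_s^N\mathbf{1}(x,v)\;\leq\;1-sNc+\tfrac{s^2}{2}e^{sNK}(NK)^2,
$$
and for $s$ small enough (depending only on $N,K,c$) the second-order term is at most $sNc/2$, uniformly in $(x,v)$.

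To conclude, I iterate. Positivity of the kernel together with the identity
$$
\mathcal{L}_s^{(k+1)N}\mathbf{1}(x,v)=\sum_{f^N(z)=x}\frac{\|(df^N_z)^{-1}v\|^{-s}}{|\det df^N_z|}\,\mathcal{L}_s^{kN}\mathbf{1}(z,u_z)
$$
yield by induction $\sup\mathcal{L}_s^{kN}\mathbf{1}\leq(1-sNc/2)^k$. For general $n=kN+r$ with $0\leq r<N$, the crude estimate $\|(df_y)^{-1}v\|\geq L_1^{-1}$ gives $\mathcal{L}_s^r\mathbf{1}(x,v)\leq L_1^{sr}$, so $\mathcal{L}_s^n\mathbf{1}(x,v)\leq L_1^{sN}(1-sNc/2)^k\leq Ce^{-n\chi}$ with $\chi=-N^{-1}\log(1-sNc/2)>0$ and $C$ depending on $L_1,N,s$. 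The main obstacle is to ensure uniformity in $(x,v)$ of the $O(s^2)$ remainder in the Taylor expansion; this is what the compact $C^{1+}$ setting and the resulting uniform derivative bounds provide.
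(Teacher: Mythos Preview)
Your proof is correct and follows essentially the same route as the paper: a Taylor expansion of $e^{-s\log\|\wh{df}^{-N}_{\hx}v\|}$ combined with the uniform lower bound $I(x,v,f^N)\geq Nc$ gives a one-step contraction for small $s$, which is then iterated via the multiplicative structure over preimages, with a crude derivative bound handling the remainder $r$ in $n=kN+r$. The only difference is presentational: you package the iteration as powers of a transfer operator $\mathcal{L}_s$ on the unit tangent bundle, while the paper writes out the double sum over preimages explicitly (for $n=2m$, saying the general case is analogous); your chain-rule verification of $\mathcal{L}_s^n\mathbf{1}$ is exactly the computation the paper performs by hand.
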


\begin{proof}
Take $m$ large enough so that 
$\displaystyle\int \log \norm{\wh{df}^{-m}_{\hx}v}^{-1}d\mu^-_x(\hx)<-1$ for all $x\in S$ and $v\in T_x S$ unitary.
Using the inequality $e^x\leq 1+x +\frac{x^2}{2} e^{\abs{x}}$, we have 
$$
\begin{aligned}
&\int \norm{\wh{df}^{-m}_{\hx} v}^{-s} d\mu^-_x( \hx)=\int e^{s \log \norm{\wh{df}^{-m}_{\hx}v}^{-1}} d\mu^-_x(\hx)\\
&\leq \int \left(1+s\log \norm{\wh{df}^{-m}_{\hx}v}^{-1}+
\frac{s^2}{2} e^{\left|s \log \norm{\wh{df}^{-m}_{\hx}v}\right|}\log^2 \norm{\wh{df}^{-m}_{\hx}v}^{-1}\right) 
 d\mu^-_x(\hx)\\
&\leq 1-s +K s^2
\end{aligned}
$$
for a constant $K>0$ that only depends on $m$ and $f$ (here, we use that
$\|\wh{df}^{-m}_{\hx}v\|$ is uniformly bounded from 0 and $\infty$).
Let $s_0>0$ such that 
$\kappa=\kappa(s):=1-s+Ks^2<1$ for all $s\in (0,s_0]$. In particular, 
for $s$ in this domain we have
\begin{equation}\label{eq.n=m}
\int \norm{\wh{df}^{-m}_{\hx} v}^{-s} d\mu^-_x( \hx)<\kappa,\ \ \forall x\in S,\forall v\in T_xS \text{ unitary}.
\end{equation}
Now we prove the analogous estimate for $n=km$. We do the case $n=2m$ (the general case is analogous).
We have
\begin{align*}
&\int \norm{\wh{df}^{-2m}_{\hx} v}^{-s} d\mu^-_x( \hx)
=\sum_{f^{2m}(y)=x}\frac{\norm{(df^{2m}_y)^{-1}v}^{-s}}{\abs{\det(df^{2m}_y)}}\\
&=\sum_{f^m(z)=x}\sum_{f^m(y)=z}\frac{\norm{(df^{m}_y)^{-1}(df^{m}_z)^{-1}v}^{-s}}{\abs{\det(df^{m}_y)\det(df^{m}_z)}}\\
&=\sum_{f^m(z)=x}\sum_{f^m(y)=z}\frac{\norm{(df^{m}_y)^{-1}v_z}^{-s}\norm{(df^{m}_z)^{-1}v}^{-s}}{\abs{\det(df^{m}_y)\det(df^{m}_z)}}\\
&=\sum_{f^m(z)=x}\frac{\norm{(df^{m}_z)^{-1}v}^{-s}}{\abs{\det(df^{m}_z)}} 
\sum_{f^m(y)=z}\frac{\norm{(df^{m}_y)^{-1}v_z}^{-s}}{\abs{\det(df^{m}_y)}},
\end{align*}
where $v_z=\tfrac{(df^{m}_z)^{-1}v}{\norm{(df^{m}_z)^{-1}v}}\in T_yS$ is unitary.
By estimate (\ref{eq.n=m}), it follows that
\begin{align*}
&\int \norm{\wh{df}^{-2m}_{\hx} v}^{-s} d\mu^-_x( \hx)
\leq \kappa\sum_{f^m(z)=x}\frac{\norm{(df^{m}_z)^{-1}v}^{-s}}{\abs{\det(df^{m}_z)}}\leq \kappa^2.
\end{align*}
Writing $\kappa=e^{-m\chi}$ and letting 
$C=[\inf m(df_x)]^{-ms}$, the result follows.
\end{proof}

As a direct consequence, we obtain the following.

\begin{corollary}\label{cor.large.dev} 
For any $0<\overline{\chi}<\chi$, $x\in S$ and $v\in T^1_x S$ unitary we have
$$
\mu^-_x\left\lbrace \hx: \norm{\wh{df}^{-n}_{\hx}v}^{-s}\geq e^{-n\overline{\chi}}\right\rbrace<Ce^{-n(\chi-\overline{\chi})}.
$$
Hence, for $\mu^-_x$--a.e. $\hx$ there is $n(\hx)>0$ such that
$\norm{\wh{df}^{-n}_{\hx}v}^{-s}<e^{-n\overline{\chi}}$, $\forall n\geq n(\hx)$.
\end{corollary}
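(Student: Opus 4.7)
The statement is a direct consequence of Lemma \ref{lemma.integral.bound} via Markov's inequality followed by Borel--Cantelli.

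First I would apply Markov's (Chebyshev's) inequality to the nonnegative function $\hx\mapsto \|\wh{df}^{-n}_{\hx}v\|^{-s}$ on the probability space $(\vt^{-1}(x),\mu^-_x)$. For any threshold $t>0$,
$$
\mu^-_x\bigl\{\hx:\|\wh{df}^{-n}_{\hx}v\|^{-s}\geq t\bigr\}\leq t^{-1}\int \|\wh{df}^{-n}_{\hx}v\|^{-s}\,d\mu^-_x(\hx).
$$
Combining this with the bound $\int \|\wh{df}^{-n}_{\hx}v\|^{-s}d\mu^-_x(\hx)<Ce^{-n\chi}$ supplied by Lemma \ref{lemma.integral.bound}, and taking $t=e^{-n\overline{\chi}}$, yields
$$
\mu^-_x\bigl\{\hx:\|\wh{df}^{-n}_{\hx}v\|^{-s}\geq e^{-n\overline{\chi}}\bigr\}\leq e^{n\overline{\chi}}\cdot Ce^{-n\chi}=Ce^{-n(\chi-\overline{\chi})},
$$
which is exactly the displayed inequality.

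For the second assertion I would simply invoke the Borel--Cantelli lemma. Since $\chi-\overline{\chi}>0$, the sequence of measures just estimated is summable:
$$
\sum_{n\geq 1}\mu^-_x\bigl\{\hx:\|\wh{df}^{-n}_{\hx}v\|^{-s}\geq e^{-n\overline{\chi}}\bigr\}\leq \sum_{n\geq 1}Ce^{-n(\chi-\overline{\chi})}<\infty.
$$
Hence $\mu^-_x$--almost every $\hx$ lies in only finitely many of the sets $\{\hx:\|\wh{df}^{-n}_{\hx}v\|^{-s}\geq e^{-n\overline{\chi}}\}$; equivalently, for each such $\hx$ there exists $n(\hx)$ with $\|\wh{df}^{-n}_{\hx}v\|^{-s}<e^{-n\overline{\chi}}$ for all $n\geq n(\hx)$.

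There is no real obstacle here: the work was done in Lemma \ref{lemma.integral.bound}. The only minor point worth noting is that, as stated, the corollary is pointwise in $(x,v)$, so the exceptional $\mu^-_x$--null set depends on $(x,v)$; this is fine for the corollary as written, and if a uniform-in-$v$ version were later needed one would cover the unit circle $T^1_xS$ by an $\eta$-net and use a standard continuity/monotonicity argument in $v$, at the cost of slightly worsening the constants.
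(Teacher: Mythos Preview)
Your proof is correct and matches the paper's own argument exactly: the paper also derives the first estimate from Lemma \ref{lemma.integral.bound} via the Markov inequality and then deduces the second assertion from the Borel--Cantelli lemma.
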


\begin{proof}
The first claim follows from the Markov inequality and the second from the Borel-Cantelli lemma. 
\end{proof}

Now we are able to prove Theorem~\ref{thm.unstable.angle}.

\begin{proof}[Proof of Theorem~\ref{thm.unstable.angle}]
We consider $x\in S$ such that $\mu^-_x$--a.e. point $\hx\in \vartheta^{-1}(x)$ is Oseledets regular.
This defines a set of full $\mu$--measure. Fix one such $x$, and let $E\in \mathbb{P}T_x M$
be an arbitrary direction. Fix  a unit vector $v\in E$. 
We will prove the lemma in two steps. Firstly, we will define for $\mu^-_x$--a.e.
$\hx$ a parameter $\eta(\hx)>0$
such that
$$
\angle(w,v)\leq\eta(\hx)\ \Longrightarrow\ \norm{\wh{df}^{-n}_{\hx}w}\to\infty \text{ as }n\to\infty.
$$
By the Oseledets Theorem, the above condition implies that $\angle(E,E^u_{\hx})>\eta(\hx)$, since $E^u_{\hx}$ contracts exponentially fast for the past.  So
$$
\mu^-_x\lbrace \hx:\angle (E,E^u_{\hx})<\eta \rbrace\leq \mu^-_x\lbrace \hx:\eta(\hx)<\eta \rbrace.
$$
The second step consists of estimating this latter measure.

Fix $\hx\in \vartheta^{-1}(x)$. To define $\eta(\wh x)$, we will represent
$\wh{df}^{-n}_{\hx}$ in a suitable system of coordinates. Write $E_n:=\wh{df}^{-n}_{\hx}E$,
and consider the decomposition $E_n\oplus E_n^\perp$. 
In the sequel, write $x_{-j}=\hf^{-j}(\hx)$. Then
the derivative $\wh{df}^{-\ell}_{x_{-j}}:E_j\oplus E^\perp_j\to E_{j+\ell}\oplus E_{j+\ell}^\perp$
equals
$$
\wh{df}^{-\ell}_{x_{-j}}=\left[\begin{array}{cc}
\lambda^\ell_j & C^\ell_j\\
 & \\
0 & d^\ell_j/\lambda^\ell_j
\end{array}\right]
$$
where, $\lambda^\ell_j=\norm{\wh{df}^{-\ell}_{x_{-j}}\mid_{E_j}}$, $d^\ell_j= \det \wh{df}^{-\ell}_{x_{-j}}$
and $|C^\ell_j |\leq \sup\|df^{-1}\|^\ell$.
Decompose $\wh{df}^{-\ell}_{x_{-j}}=D^\ell_j+U^\ell_j$, where
$$
D^\ell_j=\left[\begin{array}{cc}
\lambda^\ell_j & 0\\
0 & d^\ell_j/\lambda^\ell_j
\end{array}\right]
\text{\ and \ }
U^\ell_j=\left[\begin{array}{cc}
0 & C^\ell_j\\
0 & 0
\end{array}\right].
$$
Noting that the composition of two matrices of the form $U_j^\ell$ is zero, we have
$$
\wh{df}^{-n}_{\hx}=D^n_0+\sum_{\ell=0}^{n-1}D^{n}_{\ell+1}U_\ell^{\ell+1} D^\ell_0=D_0^n+\tilde{U}_0^n,
$$
where  $\tilde{U}^n_0:=\sum_{\ell=0}^{n-1}D^{n}_{\ell+1}U_\ell^{\ell+1} D^\ell_0$.
Calculating the last sum, we obtain that $C_n:=C_0^n$ equals
$$
C_n=\sum^{n-1}_{\ell=0} \frac{\lambda^{n}_{\ell+1}C_\ell^{\ell+1} d^\ell_0}{\lambda^\ell_0}
$$
Let $L:=\sup_{x\in M}\norm{(df_x)^{\pm 1}}$ and note that as $f$ is volume preserving 
$|\det df^{-1}|<1$, then we have
$$
\abs{C_n}\leq L\sum^{n-1}_{\ell=0} \frac{\lambda^{n}_{\ell+1}}{\lambda^\ell_0}\leq 
\lambda_0^n L^2\sum^{n-1}_{\ell=0} \left(\frac{1}{\lambda^\ell_0}\right)^2.
$$
If $w=(1,\eta)\in E\oplus E^\perp$, then
$$
\wh{df}^{-n}_{\hx}w=\left[\begin{array}{cc}
\lambda^n_0 & C_n\\
 & \\
0 & d^n_0/\lambda^n_0
\end{array}\right]
\begin{bmatrix}1 \\ \\ \eta \end{bmatrix}=
(\lambda^n_0+\eta C_n,\eta d^n_0/\lambda^n_0).
$$ 
Define
$$
\eta(\hx):=\frac{1}{2L^2\sum_{\ell\geq 0}\left(\tfrac{1}{\lambda_0^\ell}\right)^2}\cdot
$$ 
Hence,
if $|\eta|<\eta(\hx)$ then $|\lambda^n_0+\eta C_n|\geq \lambda_0^n/2$ goes to infinity
as $n\to\infty$, and so $\norm{\wh{df}^{-n}_{\hx}w}\to\infty$ as $n\to\infty$. This concludes the first step of the proof.

Now we estimate $\eta(\hx)$ from below in terms of the hyperbolicity of $\hx$.
Fix $0<\overline{\chi}<\chi$. For each $\hx$,
let $B(\hx)=\{\ell\geq 0: \|\wh{df}^{-\ell}_{\hx}v\|^{-s}\geq e^{-\ell\overline\chi}\}$
and $\ell(\hx)=\sup B(\hx)$. We have:
\begin{enumerate}[$\circ$]
\item If $\ell\not\in B(\hx)$ then $\lambda_0^\ell > e^{\ell \overline{\chi}/s}$
\item If $\ell\in B(\hx)$ then $\lambda_0^\ell\geq L^{-\ell}$.
\end{enumerate}
Hence
\begin{align*}
&\sum_{\ell\geq 0}\left(\tfrac{1}{\lambda_0^\ell}\right)^2=
\sum_{\ell\in B(\hx)}\left(\tfrac{1}{\lambda_0^\ell}\right)^2+
\sum_{\ell\not\in B(\hx)}\left(\tfrac{1}{\lambda_0^\ell}\right)^2\leq
\sum_{\ell\in B(\hx)}L^{2\ell}+
\sum_{\ell\not\in B(\hx)}e^{-\ell\left(\frac{2\overline{\chi}}{s}\right)}\\
&\leq \sum_{\ell=0}^{\ell(\hx)}L^{2\ell}+S=\frac{L^{2\ell(x)+2}}{L^2-1}+S\leq \frac{L^{2\ell(x)+2}(S+1)}{L^2-1}
\end{align*}
where $S=\sum_{\ell\geq 0}e^{-\ell\left(\frac{2\overline{\chi}}{s}\right)}$. Therefore,
$$
\eta(\hx)\geq \frac{L^2-1}{2L^{2\ell(\hx)+4}(S+1)}=A_0 L^{-2\ell(x)},
$$
where $A_0=\tfrac{L^2-1}{2L^4(S+1)}$. Define $\ell_0:=\tfrac{\log\left(A_0/\eta\right)}{2\log L}$.
If $\ell(\hx)\leq \ell_0$, then $\eta(\hx)\geq \eta$, and so
$\mu^-_x\lbrace \hx:\eta(\hx)<\eta \rbrace\leq \mu^-_x\lbrace \hx:\ell(\hx)>\ell_0\rbrace$. By Corollary \ref{cor.large.dev},  the latter measure is at most
$$
\sum_{\ell>\ell_0}Ce^{-\ell(\chi-\overline{\chi})}<C\frac{e^{-\ell_0(\chi-\overline{\chi})}}{1-e^{-(\chi-\overline{\chi})}}
= \left(\tfrac{CA_0^{-\beta}}{1-e^{-(\chi-\overline{\chi})}}\right)\eta^\beta
$$
for $\beta=\tfrac{\chi-\overline{\chi}}{2\log L}>0$. Letting $A=\tfrac{CA_0^{-\beta}}{1-e^{-(\chi-\overline{\chi})}}$,
we conclude that
$$
\mu^-_x\lbrace \hx:\angle (E,E^u_{\hx})<\eta \rbrace\leq \mu^-_x\lbrace \hx:\eta(\hx)<\eta \rbrace\leq 
\mu^-_x\lbrace \hx:\ell(\hx)>\ell_0\rbrace\leq A\eta^\beta.
$$
The proof is complete.
\end{proof}

The next step is to prove Theorem~\ref{thm.big.unstable}.
For this, we follow the presentation of \cite[Lemma~4]{ZHANG}, which in turn is 
an analogue of \cite[Corollary 4(a),(c)]{Dolgopyat-Krikorian}.

In the sequel, we fix $0<\overline\chi<\chi$, $s>0$ satisfying Lemma \ref{lemma.integral.bound}
and write $\lambda=\overline{\chi}/s$. We also fix $x\in S$ such that $\mu^-_x$-a.e.
$\hx$ is Oseledets regular. 

We define a fibered version of Pesin sets at $x$ as follows. Let $C,\ve>0$ and $E\in \mathbb{P}T_x M$.
Writing $E_k({\hx})=\wh{df}^{-k}_{\hx}E$ and $E^u_k(\hx)=E^u_{\hf^{-k}(\hx)}$, let 
$\Lambda^-_{\varepsilon,C,E}(x)$ be the set of $\hx\in \vartheta^{-1}(x)$ such that for all $n,k\geq 0$ it holds:
\begin{enumerate}[(1)]
\item $\norm{\wh{df}^{-n}_{\hf^{-k}(\hx)}\mid_{E^u_k(\hx)}}\leq C e^{-n\lambda+k\varepsilon}$,
\item $\norm{\wh{df}^{-n}_{\hf^{-k}(\hx)}\mid_{E_k(\hx)}}\geq C^{-1} e^{n\lambda-k\varepsilon}$,
\item $\angle (E_k(\hx),E^u_k(\hx))\geq C^{-1}e^{-k\varepsilon}$.
\end{enumerate} 
Compare this with the definition of Pesin sets in Section~\ref{ss.hom.relation}: now, 
we control just the past behavior along $E^u$ and $E$ for pre-orbits of $x$.

\begin{lemma}\label{lemma.unstable.pesin}
For all $\sigma>0$, there are $C,\varepsilon>0$ such that for $\mu$--a.e. $x\in S$
the following holds: for every $E\in \mathbb PT_x S$ it holds 
$\mu^-_x(\Lambda^-_{\varepsilon,C,E}(x))>1-\sigma$.
\end{lemma}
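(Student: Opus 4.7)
The plan is to combine three ingredients: Pesin theory for the hyperbolic measure $\hmu$ on $\wh S$ (transferred to the conditional measures $\mu_x^-$ via the disintegration $\hmu=\int\mu_x^-\,d\mu(x)$) to control condition~(1) and the intrinsic angle between $E^u$ and $E^s$; the angle estimate of Theorem~\ref{thm.unstable.angle}, to ensure the arbitrary direction $E$ is not too close to $E^u_{\hx}$; and the uniform-in-$x$ moment bound of Lemma~\ref{lemma.integral.bound}, which makes the resulting constants uniform on a full $\mu$-measure set.

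Fix $\sigma>0$. By standard Pesin theory applied to $\hf$ and $\hmu$, I would construct a Pesin block $\wh\Lambda\subset\wh S$ with $\hmu(\wh\Lambda)>1-\sigma_1$ (for $\sigma_1$ to be chosen) on which, for every $\hz\in\wh\Lambda$ and every $n,k\geq 0$,
\[
\|\wh{df}^{-n}_{\hf^{-k}\hz}|_{E^u_{\hf^{-k}\hz}}\|\leq C_0 e^{-n\chi+k\varepsilon},\quad \|\wh{df}^{-n}_{\hf^{-k}\hz}|_{E^s_{\hf^{-k}\hz}}\|\geq C_0^{-1}e^{n\chi-k\varepsilon},\quad \angle(E^u_{\hf^{-k}\hz},E^s_{\hf^{-k}\hz})\geq C_0^{-1}e^{-k\varepsilon}.
\]
Disintegrating $\hmu(\wh\Lambda^c)=\int\mu_x^-(\wh\Lambda^c)\,d\mu(x)<\sigma_1$ and applying Markov's inequality gives $\mu_x^-(\wh\Lambda)>1-\sigma/3$ on a large $\mu$-measure set; an exhaustion by Pesin blocks of increasing $C_0$ together with Egorov's theorem extends this to $\mu$-a.e.~$x$. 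Theorem~\ref{thm.unstable.angle} then gives $\mu_x^-\{\hx:\angle(E,E^u_{\hx})<1/C_0\}\leq AC_0^{-\beta}<\sigma/3$ uniformly in~$E$ after enlarging~$C_0$.

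The core step is to verify (1)--(3) on the good set $\{\hx\in\wh\Lambda:\angle(E,E^u_{\hx})\geq 1/C_0\}$. Condition~(1) is immediate from the Pesin bound upon choosing $\lambda\leq\chi$. For~(2) and~(3) the geometric observation is that the backward iterate $E_k(\hx)=\wh{df}^{-k}_{\hx}E$ aligns with the past-expanding direction $E^s_{\hf^{-k}\hx}$. Writing $E=a e^u+b e^s$ in the basis of $E^u_{\hx}\oplus E^s_{\hx}$, where $|b/a|\gtrsim 1/C_0$ thanks to the angle estimate, the Pesin bounds on $\|\wh{df}^{-k}|_{E^u}\|$ and $\|\wh{df}^{-k}|_{E^s}\|$ force the $E^s_k$-component of $E_k$ to dominate the $E^u_k$-component by a factor $\gtrsim e^{2k\chi}$; combined with $\angle(E^u_k,E^s_k)\geq C_0^{-1}e^{-k\varepsilon}$ this yields~(3). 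Applying $\wh{df}^{-n}_{\hf^{-k}\hx}$ and using $\|\wh{df}^{-(n+k)}|_{E^s_{\hx}}\|\geq C_0^{-1}e^{(n+k)\chi-k\varepsilon}$ together with an upper bound on $\|E_k\|$ then yields~(2).

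The main obstacle is the uniformity of the constants in~$x$: Pesin theory alone produces constants uniform only on subsets of large $\hmu$-measure, so pushing uniformity to $\mu$-a.e.~$x$ requires combining the Pesin exhaustion with the \emph{uniform-in-$x$} moment bound of Lemma~\ref{lemma.integral.bound} -- the genuine input coming from the hypothesis $C(f)>0$ and what distinguishes the present setting from generic Pesin theory.
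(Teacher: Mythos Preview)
Your approach has a genuine gap precisely at the point you yourself flag as the ``main obstacle'': uniformity of the constants in $x$. If $\hmu(\wh\Lambda^c)<\sigma_1$, disintegration plus Markov only gives $\mu_x^-(\wh\Lambda)>1-\sigma/3$ on a set of $x$'s of $\mu$-measure $>1-3\sigma_1/\sigma$, not on a full-measure set. Your proposed fix via ``exhaustion by Pesin blocks of increasing $C_0$ together with Egorov'' does not work: exhausting would cover $\mu$-a.e.~$x$ only at the cost of letting $C_0$ depend on $x$, whereas the lemma demands a \emph{single} pair $(C,\varepsilon)$ valid for $\mu$-a.e.~$x$. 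You invoke Lemma~\ref{lemma.integral.bound} at the end but give no mechanism by which it repairs the argument; as written, the uniform moment bound plays no actual role in your proof.

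The paper's proof bypasses Pesin blocks entirely and never decomposes $E$ along $E^u\oplus E^s$. Instead it bounds, for each fixed pair $(n,k)$, the $\mu_x^-$-measure of the set where condition (1), (2), or (3) fails, using only the two ingredients that are genuinely uniform in $x$ and in $E$: Theorem~\ref{thm.unstable.angle} and Lemma~\ref{lemma.integral.bound}. Condition~(3) at level $k$ is handled by applying Theorem~\ref{thm.unstable.angle} at the point $x_k$ via the invariance $\mu^-_{x_k}=\mu_x^-\circ\hf^{-k}$, giving a bound $AC^{-\beta}e^{-k\beta\varepsilon}$. Condition~(2) for fixed $(n,k)$ follows from Lemma~\ref{lemma.integral.bound} and Markov's inequality applied directly to the direction $E_k$, giving $\text{const}\cdot C^{-s}e^{-n(\chi-\overline\chi)-ks\varepsilon}$. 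For condition~(1), a short linear-algebra dichotomy shows that its failure forces either a small angle at level $n+k$ (controlled by Theorem~\ref{thm.unstable.angle}) or a large inverse norm (controlled by Lemma~\ref{lemma.integral.bound}). All three bounds are of the form $\text{const}\cdot C^{-\alpha}e^{-\gamma(n+k)}$ with constants independent of $x$ and $E$; summing over $n,k\ge 0$ and taking $C$ large makes the total $<\sigma$. This is why the conclusion holds for every $E$ and for $\mu$-a.e.~$x$ (namely, for the full-measure set on which Theorem~\ref{thm.unstable.angle} applies).
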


\begin{proof}
We estimate the measure of points not satisfying the
properties (1)--(3).
Fix $x\in S$ satisfying Theorem~\ref{thm.unstable.angle} and $E\in \mathbb{P}T_x S$.
We start estimating the measure of points not satisfying property (3). Let $\hx\in \vt^{-1}(x)$. 
For $k>0$, the $\hf$--invariance of $\hmu$ implies that 
$\mu^-_{x_k}=\mu^-_x\circ \hf^{-k}$. Note that $E_k(\hx)$ only depends on $x_k$, so
we denote it by $E_k(x_k)$. Recalling that $\vt_k:\wh S\to S$ is the projection into
the $k$--th coordinate (see Section~\ref{Section-natural-extension}), for each $x_k$ such that $f^k(x_k)=x$ the invariance and
Theorem~\ref{thm.unstable.angle} imply that
\begin{align*}
&\ \mu^-_x\left\{\hx\in \vt^{-1}_k(x_k):\angle (E_k(\hx),E^u_k(\hx))\geq C^{-1}e^{-k\varepsilon}\right\}\\
&=\mu^-_{x_k}\left\{\hy\in \vt^{-1}(x_k):\angle (E_k(x_k),E^u(\hy))\geq C^{-1}e^{-k\varepsilon}\right\}\leq 
A C^{-\beta}e^{-k\beta \varepsilon}.
\end{align*}
Since, for each $k>0$, $\mu^-_k$ is an average of the measures $\mu^-_{x_k}$ with $f^k(x_k)=x$, we conclude that
$$
\mu^-_x\left\{\hx\in \vt^{-1}(x):\angle (E_k(\hx),E^u_k(\hx))\geq C^{-1}e^{-k\varepsilon}\right\}
\leq A C^{-\beta}e^{-k\beta \varepsilon} .
$$
Summing up in $k>0$ and choosing $C>0$ large, we obtain that the $\mu^-_x$--measure of points not
satisfying (3) is less than $\sigma/3$. Now we focus on the other properties.

\medskip
\noindent
{\sc Claim:} Let $\hx\in\vt^{-1}(x)$ and assume that
$\norm{\wh{df}^{-n}_{\hf^{-k}(\hx)}\mid_{E^u_k(\hx)}}>C e^{-n\lambda+k\varepsilon}$.
Then for any $V\in \mathbb{P}T_{x_k}M$ and any $\varepsilon_0\leq \varepsilon$ 
at least one of the two conditions below hold:
\begin{enumerate}[(1.1)]
\item[(1.1)] $\angle (E^u_{k+n}(\hx), \wh{df}^{-n}_{\hf^{-k}(\hx)}V)< 2C^{-1/2}e^{-(n+k)\varepsilon_0}$.
\item[(1.2)] $\norm{\wh{df}^{-n}_{\hf^{-k}(\hx)}\mid_V}^{-1} >
C^{1/2} e^{-n \lambda+k\varepsilon-(n+k)\varepsilon_0}$.
\end{enumerate}

\begin{proof}[Proof of claim]
Write $V_n=\wh{df}^{-n}_{\hf^{-k}(\hx)}V$. Let 
$P_E$ denote the projection onto a subspace $E$.
If the first condition fails, then for $v\in E^u_k(\hx)$ unitary we have
\begin{align*}
&\ \norm{P_{V_n^\perp} \wh{df}^{-n}_{\hf^{-k}(\hx)}P_{V^\perp}v}=\norm{P_{V_n^\perp} \wh{df}^{-n}_{\hf^{-k}(\hx)}v}
\geq C e^{- n \lambda+k\varepsilon} \sin \angle( E^u_{k+n}(\hx),V_n) \\
&\\
&\geq \tfrac{2}{\pi} Ce^{- n \lambda+k\varepsilon} \angle( E^u_{k+n}(\hx),V_n)
>  C^{1/2} e^{-n\lambda+k\varepsilon -(n+k)\varepsilon_0}.
\end{align*}
Since
$$
\norm{P_{V_n^\perp} \wh{df}^{-n}_{\hf^{-k}(\hx)}P_{V^\perp}}=
\norm{\wh{df}^{-n}_{\hf^{-k}(\hx)}\mid_V}^{-1} \abs{\det\left(\wh{df}^{-n}_{\hf^{-k}(\hx)}\right)}
$$
and $\abs{\det(df)}>1$, we conclude that the second condition holds.
\end{proof}

We apply the claim with $\varepsilon_0=\varepsilon/2$ to bound the measure
of points not satisfying condition (1).
By Theorem~\ref{thm.unstable.angle}, the $\mu^-_x$--measure of points satisfying (1.1) 
is at most $4^\beta A C^{-\beta/2}e^{-(n+k)\beta \varepsilon_0}$. 
By Lemma~\ref{lemma.integral.bound}
and the Markov inequality, the measure of points satisfying (1.2) is at most
$$
\frac{{\rm const}\times e^{-n\chi}}{(C^{1/2} e^{-n \lambda+k\varepsilon-(n+k)\varepsilon_0})^s}
= {\rm const}\times C^{-s/2}e^{-n (\chi-\overline{\chi}+s\ve_0)-ks\varepsilon_0}.
$$
Summing up these two estimates in $n,k\geq 0$, we obtain that the measure of points not satisfying (1)
is bounded by ${\rm const}\times(C^{-\beta/2}+C^{-s/2})$, where the constant does not depend on $C$.
Taking $C$ large enough, this number is less than $\sigma/3$.

Again by Lemma~\ref{lemma.integral.bound} and the Markov inequality, the measure of points not satisfying (2)
for fixed $n,k$ is at most 
$$
\frac{{\rm const}\times e^{-n\chi}}{(Ce^{-n \lambda+k\varepsilon})^s}
= {\rm const}\times C^{-s}e^{-n (\chi-\overline{\chi})-ks\varepsilon}.
$$
Summing up in $n,k$, the measure of points not satisfying (2) is bounded by
${\rm const}\times C^{-s}$, where the constant does not depend on $C$.
This number is smaller than $\sigma/3$ for large $C$. The proof of the lemma is complete.
\end{proof}

Recalling the definition of $L$--Lipschitz graph in the beginning of Section~\ref{ss.unstable.manifolds},
we now prove Theorem~\ref{thm.big.unstable}.

\begin{proof}[Proof of Theorem~\ref{thm.big.unstable}]
The conditions defining $\Lambda^-_{\varepsilon,C,E}(x)$ allow to apply graph transform methods
from \cite[Theorem~2.1.1]{Pesin-Izvestia-1976} and conclude that every $\hx\in\Lambda^-_{\varepsilon,C,E}(x)$
has a local unstable manifold that is a 1--Lipchitz graph with uniform size (depending only
on $\varepsilon,C,E$). By Lemma~\ref{lemma.unstable.pesin}, we conclude the proof.
\end{proof}

\subsection{Ergodicitity}

In this section we prove Theorem~\ref{thm.ergodicity}.
We begin with the following remark. Using the Hopf argument can be delicate in the non-invertible setting.
The stable lamination has absolutely continuous holonomies,  like in the invertible case,
but we cannot talk about unstable holonomies because they do not form a lamination:
for each $x\in S$, the unstable manifolds depend on the choice of a pre-orbit of $x$.
Fortunately, there is another form of absolute continuity \cite{qian2002srb}, which is 
the existence of a {\em $u$--subordinated partition} $\wh{\mathfs{P}}$ of $\wh{S}$, i.e.
a measurable partition such that for $\hmu$--a.e. $\hx$:
\begin{enumerate}[$\circ$]
\item $\wh{\mathfs{P}}(\hx)$ is an open subset of $V^u(\hx)$, and 
\item the projection $\vartheta:\wh{\mathfs P}(\hx)\to W^u(\hx)$ is injective.
\end{enumerate}
Letting $\hmu=\int \hmu_{\hx}^{\wh{\mathfs P}}$ be the decomposition given by the Rokhlin decomposition
theorem, it follows that the measure $\wh\mu^{\wh{\mathfs{P}}}_{\hx}\circ \vt^{-1}$
is absolutely continuous with respect to the Lebesgue measure of $W^u(\hx)$ for
$\hmu$--a.e. $\hx\in \wh{S}$. 

Let $\psi:S \to \mathbb{R}$ be a continuous function, and let $\wh\psi:\wh S\to \mathbb R$ 
be $\wh\psi=\psi\circ\vt$, which is also continuous. Denote by $\wh\psi^{+/-}(\hx)$
the forward/backward limit of the Birkhoff average of $\wh\psi$ at $\hx$ with respect to
$\hf$, when the limit exists.

\begin{lemma}\label{abs.cont}
Let $\psi:S\to \mathbb{R}$ be continuous. For $\hmu$--a.e. $\hx$ the following holds:
$\psi^+(x) = \psi^+(y)$ for Lebesgue almost every $y\in W^u(\hx)$.
\end{lemma}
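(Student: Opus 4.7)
The plan is to run a Hopf-style argument, with the $u$-subordinated partition $\wh{\mathfs{P}}$ replacing the role that unstable holonomies play in the invertible setting. The key observation is that, although forward Birkhoff averages do not a priori synchronize along unstable manifolds, the \emph{backward} averages of $\wh\psi=\psi\circ\vt$ do, thanks to property (C4)(ii).

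First I apply Birkhoff's theorem to $(\hf,\hmu)$ and $\wh\psi$: there is a set $B\subset\hS$ of full $\hmu$-measure on which $\wh\psi^+(\hx)$ and $\wh\psi^-(\hx)$ both exist and coincide, and $\wh\psi^+(\hx)=\psi^+(x)$ since $\wh\psi\circ\hf^k=\psi\circ f^k\circ\vt$. Next, for every $\hy\in V^u(\hx)$ property (C4)(ii) gives $d(y_{-n},x_{-n})\leq e^{-\chi n/2}$, so uniform continuity of $\psi$ on the compact surface $S$ implies $|\wh\psi(\hf^{-n}(\hy))-\wh\psi(\hf^{-n}(\hx))|\to 0$. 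Cesaro-averaging this bound shows $\wh\psi^-(\hy)=\wh\psi^-(\hx)$ whenever either exists; hence $\wh\psi^-$ is a.e.\ constant along each local unstable set $V^u(\hx)$.

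For $\hmu$-a.e.\ $\hx$, the conditional measure $\hmu^{\wh{\mathfs P}}_{\hx}$ gives full mass to $B$, and on $B\cap\wh{\mathfs P}(\hx)$ the two steps above combine into
$$
\wh\psi^+(\hy)=\wh\psi^-(\hy)=\wh\psi^-(\hx)=\wh\psi^+(\hx).
$$
Since $\vt$ is injective on $\wh{\mathfs P}(\hx)$ and $\hmu^{\wh{\mathfs P}}_{\hx}\circ\vt^{-1}$ is equivalent to Lebesgue on $\vt(\wh{\mathfs P}(\hx))$ (one-sided absolute continuity is exactly the property recalled before the lemma; equivalence, i.e.\ strict positivity of the density, follows from $\mu$ being the Riemannian volume and the SRB disintegration of \cite{qian2002srb}), one concludes that $\psi^+(y)=\psi^+(x)$ for Lebesgue-a.e.\ $y$ in the open neighborhood $\vt(\wh{\mathfs P}(\hx))\subset W^u(\hx)$. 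Promoting this to all of $W^u(\hx)$ is then routine using $f$-invariance of $\psi^+$ and Poincar\'e recurrence of $\hx$ into Pesin blocks (where $\vt(\wh{\mathfs P}(\cdot))$ has uniform size), noting that the restrictions of the $f^n$ to unstable leaves are $C^1$ diffeomorphisms and therefore preserve Lebesgue-null sets.

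The main obstacle I expect is this last extension from the neighborhood $\vt(\wh{\mathfs P}(\hx))$ to the entire local unstable manifold $W^u(\hx)$: one needs to control Pesin block sizes and the regularity of $\hmu^{\wh{\mathfs P}}_{\hx}\circ\vt^{-1}$ carefully in the non-invertible setting. The backward-synchronization step, on the other hand, is almost immediate from property (C4)(ii) and represents the only substantive deviation from the invertible Hopf argument.
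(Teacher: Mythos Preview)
Your proof is correct and follows essentially the same Hopf-type argument as the paper: use Birkhoff on $(\hf,\hmu)$ to get $\wh\psi^+=\wh\psi^-$ a.e., synchronize $\wh\psi^-$ along local unstable sets by backward contraction, and transfer to Lebesgue via the $u$--subordinated partition $\wh{\mathfs P}$ and the absolute continuity of its conditionals. Your explicit remark that one needs \emph{equivalence} (positive density) of $\hmu^{\wh{\mathfs P}}_{\hx}\circ\vt^{-1}$ with Lebesgue, not merely absolute continuity, is a point the paper glosses over; it is indeed available here because $\mu$ is the smooth volume.

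The only substantive difference is the extension from $\vt(\wh{\mathfs P}(\hx))$ to all of $W^u(\hx)$. You propose Poincar\'e recurrence into Pesin blocks; the paper does this more directly by applying the same argument to the iterated partitions $\hf^n\wh{\mathfs P}$, $n\geq 0$, and observing that $W^u(\hx)\subset\bigcup_{n\geq 0} f^n\big(\vt[\wh{\mathfs P}(\hf^{-n}(\hx))]\big)$. This avoids any appeal to recurrence or uniform sizes on Pesin blocks, so the ``main obstacle'' you anticipate does not in fact arise.
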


Above, $x=\vt[\hx]$ and $y=\vt[\wh y]$ (observe that $\wh\psi^+(\hx)=\psi^+(x)$ only depends on $x$).

\begin{proof}
Let $B_1=\{\wh x\in \wh S: \exists \psi^{\pm}(\hx)\text{ and }\wh\psi^-(\hx)=\wh\psi^+(\hx)\}$, which has full 
$\hmu$--measure. Let $\wh{\mathfs P}$ be a $u$--subordinated measurable partition as above.
The set $B_2=\{\hx\in B_1:\hmu^{\wh{\mathfs{P}}}_{\hx}\textrm{--a.e. }\hy\in B_1\}$ also
has full $\hmu$--measure. We also
know that  $\wh\psi^-(\wh x)=\wh\psi^-(\wh y)$ for all $\hy\in W^u(\hx)$, since $\psi$ is continuous. 
Hence, for $\hx\in B_2$ it holds
\[
\wh\psi^+(\vartheta[\hx]) =\wh\psi^+(\hy) = \wh{\psi}^-(\hy)=\wh{\psi}^-(\hx)=\wh{\psi}^+(\hx) = \psi^+(\vartheta[\hx])
\ \text{ for }\hmu^{\wh{\mathfs{P}}}_{\hx}\textrm{--a.e. }\hy\in \wh{\mathfs P}(\hx).
\]
Now repeat the argument to the partitions $\hf^n\mathfs P$, $n\geq 0$. Since 
$W^u(\hx)$ is contained in $\bigcup_{n\geq 0}f^n(\vt[\wh{\mathfs P}(\hf^{-n}(\hx))])$, the result follows.
\end{proof}

\begin{proof}[Proof of Theorem~\ref{thm.ergodicity}]

Let $K \subset S$ be a Pesin block for the stable manifolds, i.e. every $x\in K$ has
a local stable manifold $W^s_{\loc}(x)$ of uniform size and $x\in K\mapsto W^s_{\loc}(x)$
is $C^1$. We may assume that $K$ has positive volume.
Let $x_0\in K$ be a density point of $K$, and fix $U$ a small neighborhood of $x_0$
such that every $W^s_{\loc}(x)$, $x\in U\cap K$, is large with respect to $U$.
Let $\mathfs L=\{W^s_{\loc}(x):x\in U\cap K\}$ be the stable lamination, which we identify
with the union $\bigcup_{x\in U\cap K} W^s_{\loc}(x)$.
Since $\mathfs L$ is absolutely continuous, if $D$ is a disc transverse to every leaf of $\mathfs L$
then $D \cap \mathfs L$ has positive volume inside $D$. 

Fix $\psi:S\to\mathbb R$ continuous, and let $G$ be the set of points $x\in S$ satisfying
Theorems~\ref{thm.unstable.angle} and \ref{thm.big.unstable} and such that 
$\mu^-_x$--a.e. $\hx$ satisfies Lemma \ref{abs.cont}. Clearly $\mu(G)=1$.
We wish to show that if $x,y\in U\cap G$ then $\psi^+(x)=\psi^+(y)$. 

By Theorems~\ref{thm.unstable.angle} and \ref{thm.big.unstable}, there are
$\hx\in \vt^{-1}[x]$ and $\hy\in\vt^{-1}[y]$ satisfying Lemma \ref{abs.cont} such that
$W^u(\hx),W^u(\hy)$ are large and intersect every leaf of $\mathfs L$ transversaly. We thus obtain a {\em holonomy map} $H:W^u(\hx)\to W^u(\hy)$.
This map is absolutely continuous, hence there are $z\in W^u(\hx)$
with $\psi^+(x)=\psi^+(z)$ and $w\in W^u(\hy)$ with $\psi^+(y)=\psi^+(w)$ such that 
$H(y)=z$. This latter equality gives $\psi^+(z)=\psi^+(w)$, and so
$$
\psi^+(x)=\psi^+(z)=\psi^+(w)=\psi^+(y).
$$

This implies that  $x,y$ belong to the same ergodic component of $\mu$. Since we can take $K$
with measure arbitrarily close to 1, it follows that $\mu$ has an ergodic component of full volume in $U$.
This implies that $\mu$ has at most countably many ergodic components.
By contradiction, suppose that $\mu$ is not ergodic. Then there are disjoint open sets $U_1,U_2\subset S$
and distinct ergodic components $\mu_1,\mu_2$ such that Lebesgue--a.e. $x\in U_i$ is typical for $\mu_i$, $i=1,2$.
By transitivity, there is $n>0$ such that $V=f^{-n}(U_1)\cap U_2$ is a non-empty open set.
Then Lebesgue--a.e. $x\in V$ is typical for $\mu_1$ and $\mu_2$, a contradiction. Hence $\mu$ is ergodic.

Now assume that $\pm1$ is not an eigenvalue of the linear part of $f$. Since the same
holds for $f^n$, it follows from \cite{Andersson-trans} that $f^n$ is transitive for every $n\geq 1$.
Applying the same proof of ergodicity above, we obtain that $\mu$ is ergodic for $f^n$ for all $n\geq 1$.

The geometrical potential $\vf(\hx)=\log \|df_{\hx}|_{E^u_{\hx}}\|$ is admissible\footnote{The potential
$\vf$ is actually defined on $\hM$ and not on $M$, but the same proof of Proposition~\ref{thm.unique-bernoulli}(2)
works in this case, since ergodicity is preserved in the natural extension.} in the sense 
of Proposition~\ref{thm.unique-bernoulli}, by property (C3). By the Pesin entropy
formula, $\hmu$ is an equilibrium state for the pair $(\hf,\vf)$. Applying Proposition~\ref{thm.unique-bernoulli}(2)
with $\wh\nu=\hmu$, it follows that $\hmu$ is Bernoulli.
\end{proof}

\subsection{Uniqueness of the measure of maximal entropy}

Now we prove Theorem~\ref{thm.unique.ACS}. The idea is to construct
$\mathcal U_t$ such that every $f\in \mathcal U_t$ of class $C^{1+}$
has large stable manifolds, and so the lift of every measure of maximal entropy is
homoclinically related to the lift $\hmu$ of the Lebesgue measure. The proof of the homoclinic relation
uses the ``dynamical'' Sard's theorem of \cite{BCS-Annals}.

Recall the definition of $L$--Lipschitz graph in the beginning of Section~\ref{ss.unstable.manifolds}.
We first prove the following result.

\begin{theorem}\label{thm.large.uniqueness}
Let $f\in \mathcal{U}$ of class $C^{1+}$, and assume there are $\ell_0,L>0$
such that for $\mu$--a.e. $x$ the local stable manifold $W^s(x)$ contains a $L$--Lipschitz graph
centered at $x$ of size larger than $\ell_0$. Then $f$ has at most one hyperbolic measure
of maximal entropy.
\end{theorem}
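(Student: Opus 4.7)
The plan is to reduce to Theorem~\ref{thm.criterion} by showing that every hyperbolic measure of maximal entropy for $f$ is homoclinically related to the lift $\hmu$ of Lebesgue. Since $\stackrel{h}{\sim}$ is transitive on $\mathbb P_{\rm h}(\hf)$ by Proposition~\ref{prop.equivalence.relation}, this forces any two hyperbolic MMEs to be homoclinically related to each other, whence Theorem~\ref{thm.criterion} yields that at most one of them can in fact be an MME.

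The first step combines the standing hypothesis with Theorem~\ref{thm.big.unstable}. The hypothesis gives, for $\mu$-a.e.\ $x$, a local stable manifold $W^s(x)$ containing an $L$-Lipschitz graph of size $\geq \ell_0$ centered at $x$. Choosing $\sigma>0$ small and applying Theorem~\ref{thm.big.unstable} produces $\ell_0'>0$ and, for $\mu$-a.e.\ $x$, a subset of $\vartheta^{-1}(x)$ of $\mu^-_x$-measure $\geq 1-\sigma$ on which $W^u(\hx)$ is a $1$-Lipschitz graph of size $\geq \ell_0'$. Intersecting these two properties yields a positive-$\hmu$-measure set $B_\mu \subset \hat S$ whose members carry uniformly controlled local stable and unstable manifolds. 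For a hyperbolic ergodic MME $\nu$, standard Pesin theory supplies a positive-$\wh\nu$-measure Pesin block $B_\nu$ on which $W^{s/u}(\hy)$ have uniform size, uniform angle, and depend continuously on $\hy$.

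The core of the argument, and the main obstacle, is producing transverse intersections in both directions. To obtain $\wh\nu \preceq \hmu$, one must exhibit, for a positive-$(\wh\nu\otimes\hmu)$-measure subset of $B_\nu \times B_\mu$, an integer $k\geq 0$ and a transverse intersection $f^k(W^u(\hy)) \pitchfork W^s(\hx) \neq \emptyset$; Lemma~\ref{lemma.intersection} then promotes this to $\mathfs V^u(\hy) \pitchfork \mathfs V^s(\hx) \neq \emptyset$. The reverse relation $\hmu \preceq \wh\nu$ is symmetric, now using the uniformly large unstable family for $\mu$ provided by Theorem~\ref{thm.big.unstable} against the stable family of $\nu$ on $B_\nu$. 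To produce the intersections we adapt the dynamical Sard lemma of \cite{BCS-Annals}: the uniform $L$-Lipschitz stable leaves at $\mu$-typical points laminate a set of positive volume, and absolute continuity of the $\mu$-stable holonomy (in the spirit of Lemma~\ref{abs.cont}) forces a forward iterate of any typical unstable curve $W^u(\hy)$ to cross this lamination in a set of positive one-dimensional Lebesgue measure. Transversality is the delicate point: a tangency is codimension one in dimension two, and one must argue that it cannot persist on a positive-measure set. The Sard-type argument excludes tangencies on a positive-measure set by combining the absolute continuity of holonomies with the Lipschitz-graph structure of both invariant families, ruling out a measurable identification of the two Oseledets directions $E^s_\mu$ and $E^u_\nu$ (which, by Lyapunov contraction/expansion, would be dynamically inconsistent).

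With transversality in hand on positive product measure, the definitions of Section~\ref{ss.hom.relation} give $\wh\nu \stackrel{h}{\sim} \hmu$. As explained in the first paragraph, this concludes the uniqueness of the hyperbolic measure of maximal entropy.
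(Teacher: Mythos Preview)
Your overall strategy is correct and matches the paper's: show that any hyperbolic measure of maximal entropy $\nu$ satisfies $\wh\nu\stackrel{h}{\sim}\hmu$, then invoke Theorem~\ref{thm.criterion}. The direction $\hmu\preceq\wh\nu$ is indeed the easier one, and essentially works as you suggest, though you should make explicit the two ingredients the paper uses: full support of $\mu$ (so there are $\mu$--typical points arbitrarily close to any $\nu$--typical $x$), and Theorem~\ref{thm.unstable.angle} (not only Theorem~\ref{thm.big.unstable}), which guarantees that the unstable directions of $\hmu$ near $x$ are not all aligned with $E^s_x$, yielding transversality immediately without any Sard argument.

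The genuine gap is in the direction $\wh\nu\preceq\hmu$. You need $\mathfs V^u(\hy)\pitchfork\mathfs V^s(\hx)\neq\emptyset$ for $\hy$ $\wh\nu$--typical and $\hx$ $\hmu$--typical, but your proposed mechanism---``absolute continuity of the $\mu$--stable holonomy forces a forward iterate of $W^u(\hy)$ to cross this lamination''---does not produce any intersection at all. Absolute continuity of holonomy is a statement about preservation of Lebesgue class between transversals; it gives no reason why an arbitrary curve $f^k(W^u(\hy))$, with $\hy$ a $\nu$--typical point, should ever come near the $\mu$--stable lamination. You are missing a geometric mechanism for the intersection itself, prior to any discussion of transversality.

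The paper's mechanism is quite different and does not iterate $W^u(\hy)$ at all. One takes a hyperbolic periodic point $\hp$ homoclinically related to $\wh\nu$ and builds a small $su$--quadrilateral $Q$ whose boundary pieces $\partial^u Q$ lie in $\vartheta[\mathfs V^u(\hp)]$. By the inclination lemma one can make ${\rm diam}(Q)\ll\ell_0$, so the hypothesis (each $W^s(x)$ contains an $L$--Lipschitz graph of size $>\ell_0$) forces every $\mu$--stable leaf centered in $Q$ to exit $Q$; since stable leaves are pairwise disjoint, they must exit through $\partial^u Q$. This purely topological crossing argument is the key step you are missing. Only then does the paper invoke the dynamical Sard theorem \cite[Theorem~4.2]{BCS-Annals}, applied to the $C^{1+}$ stable lamination over a Pesin block, to show that the set of leaves \emph{tangent} to $\partial^u Q$ has transverse Hausdorff dimension $<1$, hence zero $\mu$--measure; this upgrades intersection to transversal intersection on a positive $\mu$--measure set, giving $\wh\nu\preceq\hmu$ via $\hp$. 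Your heuristic about ``ruling out a measurable identification of $E^s_\mu$ and $E^u_\nu$'' is not the argument used and would not by itself exclude tangencies along the fixed curve $\partial^u Q$.
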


\begin{proof}
Assume that $f$ has a hyperbolic measure of maximal. Let $\nu$ be one such measure,
and assume it is ergodic.
We prove that $\hmu,\wh\nu$ are homoclinically related.

\medskip
\noindent
{\sc Step 1:} $\hmu \preceq \wh\nu$.

\begin{proof}[Proof of Step $1$]
Let $x\in S$ be a generic point for $\nu$. Since $\mu$ is fully supported, by
Theorems~\ref{thm.unstable.angle} and \ref{thm.big.unstable} there is a set $A\subset\wh S$
of positive $\hmu$--measure such that $W^u(\hy)\pitchfork W^s(x)\neq\emptyset$
for every $\hy\in A$. This proves that $\hmu \preceq \wh\nu$.
\end{proof}

\medskip
\noindent
{\sc Step 2:} $\wh\nu \preceq \hmu$.

\begin{proof}[Proof of Step $2$]
We call $Q\subset S$ a $su$--quadrilateral if it is an open disc
such that its boundary $\partial Q=\partial^s_1Q\cup \partial^u_1Q\cup \partial^s_2Q\cup \partial^u_2Q$
is the union of four connected curves with $\partial^sQ=\partial^s_1Q\cup\partial^s_2Q\subset\vt[\mathfs V^s(\hp)]$
and $\partial^uQ=\partial^u_1Q\cup\partial^u_2Q\subset \vt[\mathfs V^u(\hp)]$
(recall the definition of $\mathfs V^{s/u}$ in Section \ref{ss.invariant.sets}).

Recall the homoclinic relation of a set and a measure introduced in Section \ref{ss.hom.relation}.
Let $\hp$ be a hyperbolic periodic point for $\hf$ homoclinically related to $\wh\nu$,
and fix a $su$--quadrilateral $Q$ associated to $\hp$.
By the inclination lemmas (Propositions~\ref{lambda.u-lemma} and \ref{lambda.s-lemma}),
we can choose $Q$ with diameter
$\ll\ell_0$ such that every $L$--Lipschitz graph centered at $x\in Q$ of size larger than $\ell_0$
intersects $\partial Q$. Therefore, $W^s(x)\cap \partial Q\neq\emptyset$ for $\mu$--a.e. $x\in Q$.
Since stable manifolds either coincide or are disjoint, we conclude that 
$W^s(x)\cap \partial^uQ\neq\emptyset$ for $\mu$--a.e. $x\in Q$.

Let $\wh{K}\subset \wh{S}$ be a Pesin block with positive $\hmu$--measure
such that  $K:=\vartheta(\wh{K})\subset Q$ has positive $\mu$--measure.
Around a density point $x$ of $K$, consider a subset $K'\subset K$ such that
$\mathfs{L}:=\{W^s(x):x\in K'\}$ is a continuous lamination with $C^{1+}$ leaves, Lipschitz holonomies
and transverse dimension equal to one. 
By \cite[Theorem~4.2]{BCS-Annals}, the set $\mathfs T=\{x\in K': W^s(x)\textrm{ is tangent to }\partial^u Q\}$
has transverse Hausdorff dimension smaller than one, and so there is $B\subset K'$ with $\mu(B)>0$
such that $\partial^uQ\pitchfork W^s(x)\neq\emptyset$ for all $x\in B$. Since $\hp$ is homoclinically related
to $\wh\nu$, it follows that $\wh\nu\preceq \hmu$.
\end{proof}

Therefore $\hmu$ and $\wh\nu$ are homoclinically related. By Theorem~\ref{thm.criterion},
$\nu$ is unique.
\end{proof}

\begin{proof}[Proof of Theorem~\ref{thm.unique.ACS}]
Recall that $f_t=E\circ P\circ h_t\circ P^{-1}\in\mathcal U$ for large $t$ \cite[Prop.~6.1]{ACS}, see also
Section \ref{ss.acs}. Fix one such $t$ and let $\lambda^-_\mu(f_t)<0$ be the negative Lyapunov exponent of $f_t$
with respect to $\mu$. Since $h_t$ is a volume-preserving diffeomorphism, 
we have $\operatorname{deg}(f_t)=\det(E)$. By the Pesin entropy formula for endomorphisms
\cite{Liu-entropy-endo}, $h_{\mu}(f_t)=\log \operatorname{deg}(f_t)+\abs{\lambda_\mu^-(f_t)}>\log\abs{\det(E)}$.
Since Lyapunov exponents are continuous inside $\mathcal{U}$ \cite[Theorem~B]{ACS},
if $f$ is $C^1$ close to $f_t$ then
$$
h_\mu(f)=\log \operatorname{deg}(f)+\abs{\lambda_\mu^-(f)}>\log\abs{\det(E)}
$$
and so $h_{\rm top}(f)>\log\abs{\det(E)}$. Given an invariant measure $\nu$, let $F_\nu(f)$
denote its folding entropy, see Section \ref{Section-ph} for the definition. Recall that
this number is $\leq \log \operatorname{deg}(f)=\log\abs{\det(E)}$
and satisfies the equality (\ref{LY}), see \cite{Lin.shu}.
Therefore, if $\nu$ is a measure of maximal entropy for $f$ then
$$
\log\abs{\det(E)}+\abs{\lambda_{\nu}^-(f)}\geq F_\nu(f)+\abs{\lambda_{\nu}^-(f)}>\log\abs{\det(E)}\ 
\Longrightarrow \ \lambda_\nu^-(f)<0
$$
and so $\nu$ is hyperbolic.

By \cite[Proposition~6.1]{ACS}, if $t$ is large then every $f$ of class $C^{1+}$ that is $C^1$ close to 
$f_t$ has the following property: $W^s(x)$ contains a
{\em $v$--segment}\footnote{This is a curve tangent to a vertical cone with size larger than some $\ell_0$.}
for $\mu$--a.e. $x\in S$.
This notion implies that $W^s(x)$ contains a $L$--Lipschitz graph centered at $x$ of size larger
than $\ell_0$, as required in
Theorem~\ref{thm.large.uniqueness}. Hence, $f_t$ has a $C^1$ neighborhood $\mathcal U_t\subset U$
such that every $f\in \mathcal U_t$ of class $C^{1+}$ has at most one measure of maximal entropy.
If $f$ is $C^\infty$, then the existence of such measure follows from \cite{Newhouse-Entropy}.

Finally, we prove that, when it exists, the unique measure of maximal entropy $\nu$ is Bernoulli and fully supported.
We have that $\wh\nu$ is homoclinically related to $\hmu$, and that this latter measure
is Bernoulli \cite[Theorem D]{ACS}. By Theorem~\ref{thm.unique-bernoulli}(2),
it follows that $\nu$ is Bernoulli and fully supported.
\end{proof}

\section{Expanding measures}\label{Section-exp}

In this section we prove Theorem \ref{thm.viana.maps}. Recall from Section \ref{Section-introduction} that for $a_0\in (1,2)$ a parameter such that $t=0$
is pre-periodic for $t\mapsto a_0-t^2$, $d\geq 2$ and $\alpha>0$ small enough, the associated
Viana map is $f=f_{a_0,d,\alpha}$ defined by 
$f(\theta,t)=(d\theta,a_0+\alpha \sin(2\pi \theta)-t^2)$ on $\mathbb S^1\times \mathbb{R}$. 

By normal hyperbolicity,  for any $g$ that is $C^3$--close to $f$, there exists a $g$--invariant center foliation
and a projection $\pi_g:\mathbb S^1 \times \mathbb{R} \to \mathbb S^1$ by this foliation, 
see \cite[Section 2.3]{Viana-maps}.
This defines a quotient map $\overline{g}: \mathbb S^1 \to\mathbb  S^1$ given
by $\overline{g}(\theta) = [\pi_g \circ g \circ \pi_g^{-1}](\theta)$.
The theory of normal hyperbolicity also gives that $g$ is leaf conjugated to $f$,
which is equivalent to $\overline{g}$ being $C^0$--conjugated to $\theta \mapsto d\theta$,
see e.g. \cite{HPS}. 

 Observe that the critical points for $f$ are the points with $t=0$.  Moreover, the second derivative
 in $t$ along the fibers is $-2$. By the Implicit Function Theorem,
 for any $g$ sufficiently $C^3$--close to $f$, the set of critical points is contained in
 a $C^2$ curve near the curve $t=0$. 

Recall that $\mathfs{S}\subset M$ is the set of singularities 
of $f$, which in this case coincides with the set of critical points,  and that $\wh{\mathfs{S}}=\bigcup_{n\in \Z}\hf^n(\vartheta^{-1}[\mathfs{S}])$, see Section~\ref{Section-ALP}.
Recall also the definition of the set $\nuh=\nuh_\chi$ in page \pageref{Def-NUH}.
For each $\chi>0$, let
$${\rm NUE}_\chi=\{\hx\in\nuh_\chi:E^s_{\hx}=\{0\}\}.
$$ 
We consider the set of non-uniformly expanding points of $f$.

\medskip
\noindent
{\sc The set ${\rm NUE}$:} It is defined as the union ${\rm NUE}=\bigcup_{\chi>0}{\rm NUE}_\chi$.

\medskip
Note that if $\hx\in{\rm NUE}$ then:
\begin{enumerate}[$\circ$]
\item $W^u(\hx)$ is an open subset of $M$;
\item $W^s(\hx)=\{\vt[\hx]\}$.
\end{enumerate}

\begin{proof}[Proof of Theorem~\ref{thm.viana.maps}]

Let $f=f_{a_0,d,\alpha}$ with $\alpha$ small.  
Applying \cite[Propositions~12.1 and 12.2]{ALP-23} and by \cite[Section 6]{Alves-Viana-2002},
we find a $C^3$ neighborhood $\mathfs U$ of
$f$ such that every $g\in\mathfs U$:
\begin{enumerate}[$\circ$]
\item satisfies conditions (A1)--(A7),
\item only has adapted expanding measures of maximal entropy, and
\item for any open set $U$, there exists $n = n(U)$ such that $g^n(U)$ contains the maximal invariant set of $g$. 
\end{enumerate}
Fix $g\in \mathfs U$. Suppose that $\mu$ is a measure of maximal entropy for $g$.
Consider $\overline{g}:\mathbb S^1 \to \mathbb S^1$ the quotient map defined above.  Since $\overline{g}$ is conjugated to $\theta \mapsto d\theta$, the entropy of $\overline g$ is $\log d$. 

By \cite{Alves-Viana-2002}, up to reducing the size of $\mathfs U$, the entropy of the unique SRB measure for $g$ is strictly greater than $\log d$ (see also \cite[Prop. 12.2]{ALP-23}).  In particular, $h_{\rm top}(g) > h_{\rm top}(\overline{g})$.  By the Abramov-Rokhlin entropy formula (see \cite{abramov-non-inv}), 
for $\mu$--almost every point the fiberwise entropy is positive. Thus, the measure $\mu_\theta$,
equal to the disintegrated measure on $\pi_g^{-1}(\theta)$, is non atomic for a.e. $\theta$. 

Suppose, by contradiction, the existence of another measure of maximal entropy $\mu'$ for $g$.
Let $\hx \in \widehat{\mathbb S^1 \times I}$ be a $\hmu'$--typical point and $W^u(\hx)$ be its local unstable manifold. Since $\mu'$ is expanding, $W^u(\hx)$ is an open set, and then there is $n>0$ such that 
$g^n(W^u(\hx))$ contains the maximal invariant set.  

Observe that $ \bigcup_{j=0}^n g^j(\mathfs{S})$ is compact and that
$\pi_g^{-1}(\theta) \cap \bigcup_{j=0}^n g^j(\mathfs{S})$ is a finite set for every $\theta$.
Since the measure $\mu_\theta$ is non atomic for a.e. $\theta$, we can find a point
$(\theta, t) \in W^u(\hx)$ such that $g^n(\theta,t)$ is $\mu$--typical and $g^j(\theta,t) \notin \mathfs{S}$
for $0<j<n$.  This shows that $\mu' \preceq \mu$. The same argument with $\mu,\mu'$ 
interchanged shows that $\mu \preceq \mu'$.  Therefore, any two measures of maximal entropy
are homoclinically related. By Theorem \ref{thm.criterion}, we conclude that there exists at most
one measure of maximal entropy.
The same argument shows that $g^n$ has at most one measure of maximal entropy
for every $n>0$. By Proposition \ref{thm.unique-bernoulli},
if $\mu$ is the measure of maximal entropy, then $(g,\mu)$ is Bernoulli.  \qedhere
\end{proof}

\begin{remark}
The above theorem holds in more generality, assuming that $g$ verifies:
\begin{enumerate}[$\circ$]
\item  for every open set $U$ there exists $n = n(U)$ such that $g^n(U) \supset \Omega(g)$,
\item for every measure of maximal entropy $\mu$, it holds
$\mu \left[\bigcup_{j=0}^ng^j(\mathfs{S}) \right]<1$ for every $n>0$, and
\item every measure of maximal entropy is adapted and expanding.
\end{enumerate}
\end{remark}

\appendix

\section{Inclination lemma}\label{appendix}

\renewcommand\thetheorem{\Alph{section}.\arabic{theorem}}

Here we prove the Inclination Lemma stated in Section \ref{ss.hom.relation}.

\begin{customprop}{4.4}[Inclination Lemma]
Let $\wh y\in Y'$, and let $\Delta\subset M$ be a disc of same dimension of $W^u(\hy)$.
If $\Delta$ is transverse to $W^s(\wh f^m(\wh y))$ for some $m\in\Z$, then 
there are discs $D_k\subset \Delta$ and $n_k\to \infty$ such that $f^{n_k}(D_k)$
converges to $W^u(\wh y)$ in the $C^1$ topology.
\end{customprop}

\begin{proof}
We first assume that $m=0$, i.e. that $\Delta\pitchfork W^s(\wh y)\neq\emptyset$.
By the definition of $Y'$, there is a sequence $n_k\to \infty$ such that $\wh f^{-n_k}(\wh y)\to \wh y$ and
$\wh f^{-n_k}(\wh y),\wh y$ belong to the same Pesin block. Since invariant manifolds
are continuous inside Pesin blocks, we have that $W^s(\wh f^{-n_k}(\wh y))$ converges to $W^s(\wh y)$ 
in the $C^1$ topology, hence there are $z_{-n_k}\in \Delta \pitchfork W^s(\wh f^{-n_k}(\wh y))$.

We wish to apply unstable graph transforms to $\Delta$, but a priori $\Delta$ does not define admissible manifolds.
In order to have this, we first need to iterate $\Delta$, as follows.  
Consider the Pesin charts $\Psi_{\wh f^i(\wh y)}^{q^s(\wh f^i(\wh y)),q^u(\wh f^i(\wh y))}$
for $i=-n_k,\ldots,0$. For simplicity, write $\Psi_i=\Psi_{\wh f^i(\wh y)}^{q^s(\wh f^i(\wh y)),q^u(\wh f^i(\wh y))}$.
By \cite[Theorem 3.3]{ALP-23}, the map $F_i=\Psi^{-1}_{i+1}\circ f \circ \Psi_{i}$
is well defined in $R[Q(\wh f^i(\wh y))]$ and has the form 
\begin{equation}\label{eq.diagonal}
F_i(u,v)=\left[\begin{array}{cc} A_i & 0 \\ 0 & B_i\end{array}\right]\left[\begin{array}{c}u\\ v\end{array}\right]+\left[\begin{array}{c}h_{i,1}(u,v)\\ h_{i,2}(u,v)\end{array}\right]
\end{equation}
for $(u,v)\in \mathbb{R}^{d_s}\times \R^{d_u}$, where $\norm{A_i},\norm{B_i^{-1}}^{-1}\leq e^{-\chi}$,
$h_{i,j}(0,0)=(0,0)$, $(dh_{i,j})_{(0,0)}=0$, and $\norm{dh_{i,j}}_{C^0}< \ve$ for $j=1,2$.

To ease the calculations, we can change $\Psi_i$ by composing it with a map 
with small $C^1$ norm and such that $\Psi_i(B^{d_s}[q^s(\wh f^i(\wh y))]\times \{0\})=W^s(\wh f^i(\wh y))$. 
Indeed, if the representing function of $W^s(\wh f^i(\wh y))$ is $F:B^{d_s}[q^s(\wh f^i(\wh y))]\to \R^{d_u}$,
then $\Phi(u,v)=(u,v+F(u))$ is a map with $\Phi(0,0)=(0,0)$, $(d\Phi)_{(0,0)}={\rm Id}$ and $\|\Phi\|_{C^1}<2$,
and $\Psi_i\circ\Phi$ clearly satisfies the required assumption. Hence, we can assume $\Psi_i$ has the form 
(\ref{eq.diagonal}) with $\norm{A_i},\norm{B_i^{-1}}^{-1}\leq e^{-\chi}$,
$h_{i,j}(0,0)=(0,0)$, $(dh_{i,j})_{(0,0)}=0$, and $\norm{dh_{i,j}}_{C^0}< 2\ve$ for $j=1,2$. Under this assumption,
the invariance of $W^s$ implies that $F_i(B^{d_s}[q^s(\wh f^i(\wh y))]\times \{0\})\subset
B^{d_s}[q^s(\wh f^{i+1}(\wh y))]\times \{0\}$ and so $h_{i,2}(x,0)=0$ for every $x\in B^{d_s}[q^s(\wh f^i(\wh y))]$.
Hence 
$$
(dF_i)_{(x,0)}=\left[\begin{array}{cc} A_i+\left(\frac{dh_{i,1}}{du}\right)_{(x,0)} & \left(\frac{dh_{i,1}}{dv}\right)_{(x,0)} \\
& \\
0 & B_i+\left(\frac{dh_{i,2}}{dv}\right)_{(x,0)}\end{array}\right].
$$

Write $z_{-n_k}=\Psi_{-n_k}(w_{-n_k})$ and let $(u,v)\in\R^n$ such that $(d\Psi_{-n_k})_{w_{-n_k}}(u,v)\in T_{z_{-n_k}}\Delta$. The assumption that $z_{-n_k}\in \Delta\pitchfork W^s(\wh f^{-n_k}(\wh y))$ implies that $v\neq 0$.
Let $z_{-n_k+i}=f^i(z_{-n_k})$ and write $z_{-n_k+i}=\Psi_{-n_k+i}(w_{-n_k+i})$, for $i=0,\ldots,n_k$.
Define also the sequence $\{(u_i,v_i)\}$ by
$$
(u_i,v_i)=(dF_{-n_k+i-1})_{w_{-n_k+i-1}}\circ \cdots \circ (dF_{-n_k})_{w_{-n_k}}(u,v),\ \ \ i=0,\ldots,n_k.
$$

\medskip
\noindent
{\sc Claim:} The ratio $\tfrac{\|u_{n_k}\|}{\|v_{n_k}\|}$ goes to zero as $n_k\to\infty$.

\begin{proof}[Proof of Claim.]
By definition, we have
\begin{align*}
\left\{
\begin{array}{l}
u_i=\left(A_{-n_k+i-1}+\left(\tfrac{dh_{-n_k+i-1,1}}{du}\right)_{w_{-n_k+i-1}}\right)u_{i-1}+\left(\tfrac{dh_{-n_k+i-1,1}}{dv}\right)_{w_{-n_k+i-1}}v_{i-1}\\
 \\
v_i=\left(B_{-n_k+i-1}+\left(\tfrac{dh_{-n_k+i-1,2}}{dv}\right)_{w_{-n_k+i-1}}\right)v_{i-1}
\end{array}
\right.
\end{align*}
and so, letting $\lambda=e^{-\chi}+2\ve$ and $\sigma=e^{\chi}-2\ve$, by induction it follows that
\begin{align*}
\left\{
\begin{array}{l}
\norm{u_i} \leq \lambda^i \norm{u}+
\displaystyle\sum_{\ell=0}^{i-1}\lambda^{i-1-\ell}\norm{\left(\tfrac{dh_{-n_k+\ell,1}}{dv}\right)_{w_{-n_k+\ell}}}\norm {v_\ell}\\
\\
\norm{v_i} \geq \sigma^{i-\ell} \norm{v_{\ell}},\ \ \ \ell=0,1,\ldots,i.
\end{array}
\right.
\end{align*}
Therefore
$$
\frac{\norm{u_i}}{\norm{v_i}}\leq \left(\frac{\lambda}{\sigma}\right)^i\frac{\|u\|}{\|v\|}+
\frac{1}{\sigma}\sum_{\ell=0}^{i-1}\left(\frac{\lambda}{\sigma}\right)^{i-1-\ell}\norm{\left(\tfrac{dh_{-n_k+\ell,1}}{dv}\right)_{w_{-n_k+\ell}}}.
$$
Since $\ve>0$ is small, we have $\lambda<1<\sigma$, hence the first term goes to zero when $i\to \infty$.
The second term has the form $\sum_{\ell=0}^i a_{i-\ell} \theta^\ell$ with $\theta=\tfrac{\lambda}{\sigma}<1$ and
$a_\ell\to 0$ as $\ell\to\infty$, since
$$
\norm{\left(\tfrac{dh_{-n_k+\ell,1}}{dv}\right)_{w_{-n_k+\ell}}}=
\norm{\left(\tfrac{dh_{-n_k+\ell,1}}{dv}\right)_{w_{-n_k+\ell}}-
\left(\tfrac{dh_{-n_k+\ell,1}}{dv}\right)_{(0,0)}}\leq C\|w_{-n_k+\ell}\|
$$
and $\|w_{-n_k+\ell}\|\leq {\rm const}\cdot e^{-\frac{\chi}{2}\ell}$, by \cite[Prop. 4.7(4)]{ALP-23}. Letting
$i=n_k$, we conclude that $\tfrac{\|u_{n_k}\|}{\|v_{n_k}\|}\to 0$ as $n_k\to\infty$. 
\end{proof}

Therefore, for $n_k$ large enough, $f^{n_k}(\Delta)$
contains a disc $\Delta_0$ of the same dimension of $W^u(\hy)$ that is contained on a $u$--admissible manifold $\wt\Delta_0$
at $\Psi_0$.\footnote{Following \cite{Sarig-JAMS}, the work \cite{ALP-23} defines $u$--admissible manifolds
requiring that a H\"older norm is at most 1/2. This constant is arbitrary and can be changed to a large constant
controlling the respective H\"older norm of $\wt\Delta_0$, so that $\wt\Delta_0$ becomes
$u$--admissible. Then for $\ve>0$ small enough the graph transforms are well-defined
and are contractions as in \cite{ALP-23}.}
Fix one such $n_{k_0}$, $\Delta_0$ and $\wt\Delta_0$. 
Since Pesin charts vary continuously on Pesin blocks, $\wt\Delta_0$ is also $u$--admissible
at $\Psi_{-n_k}$ for large $k$. For that, we just have to adjust the sizes of the representing functions, dividing the
windows parameters of $\Psi_i$ by a constant $a<1$. By the definition of unstable manifold, 
if $\mathfs F_i$ represents the unstable graph transform associated to $\Psi_i\to \Psi_{i+1}$, then  
$$
V_k:=(\mathfs F_{-1}\circ\mathfs F_{-2}\circ\cdots\circ \mathfs F_{-n_k})(\wt\Delta_0)
$$
defines a sequence $\{V_k\}$ of $u$--admissible manifolds at $\Psi_0$ that converges to
$W^u(\wh y)$ in the $C^1$ topology. If $k$ is large enough, then $V_k=f^{n_k}(D_k)$ for some disc
$D_k\subset \Delta_0$. This concludes the proof when $m=0$.

Now we assume that $\Delta$ is transverse to $W^s(\wh f^m(\wh y))$ for some $m$.
We claim that the proof will be complete once we prove that some forward iterate of $\Delta$ is transverse to
$W^s(\wh y)$. Indeed, assume there is $\Delta'\subset\Delta$ such $f^\ell(\Delta')$ is transverse
to $W^s(\wh y)$ for some $\ell\geq 0$. By the first part of the proof, there are discs $\Delta_k\subset f^\ell(\Delta')$
and $n_k\to\infty$ such that $f^{n_k}(\Delta_k)$ converges to $W^u(\wh y)$ in the $C^1$ topology.
Letting $g$ be the inverse branch of $f^\ell$ such that $g(f^\ell(\Delta'))=\Delta'$,
we have that $D_k=g(\Delta_k)\subset\Delta'\subset\Delta$ are discs such that 
$f^{n_k+\ell}(D_k)=f^{n_k}(\Delta_k)$ converges to $W^u(\wh y)$ in the $C^1$ topology, proving the claim.
To prove that some forward iterate of $\Delta$ is transverse to
$W^s(\wh y)$, we also use the first part of the proof: there are discs $\Delta_k\subset\Delta$ and $n_k\to \infty$ such that
$f^{n_k}(\Delta_k)$ converges to $W^u(\wh f^m(\wh y))$ in the $C^1$ topology. Let
$\wt g=f^{-1}_{y_0}\circ\cdots\circ f^{-1}_{y_{m-1}}$.
Then $\wt g(f^{n_k}(\Delta_k))$ converges to $\wt g(W^u(\wh f^m(\wh y)))$
in the $C^1$ topology. By \cite[Prop. 4.7(2)]{ALP-23}, this latter set is a subset of 
$W^u(\wh y)$ containing $y_0$. Hence, if $k$ is large enough, $\wt\Delta:=\wt g(f^{n_k}(\Delta_k))$
is transverse to $W^s(\wh y)$. For $k$ large we have
$\wt\Delta=f^{n_k-m}(\Delta_k)$, thus proving that
the $(n_k-m)$--th forward image of $\Delta$ is transverse to
$W^s(\wh y)$. This concludes the proof for arbitrary $m$.
\end{proof}

The same proof, applying the proper inverse branchs, gives a stable version of the
inclination lemma. Given $\hy=(y_n)_{n\in\Z}$, write $f^{-n}_{\hy}=f^{-1}_{y_{-n}}\circ\cdots\circ f^{-1}_{y_{-1}}$.

\begin{proposition}[Inclination lemma - stable version]\label{lambda.s-lemma}
Let $\wh y\in Y'$, and let $\Delta\subset M$ be a disc of same dimension of $W^s(\hy)$.
If $\Delta$ is transverse to $W^u(\wh f^m(\wh y))$ for some $m\in\Z$, then 
there are discs $D_k\subset \Delta$ and $n_k\to \infty$ such that $f^{-n_k}_{\hy}(D_k)$
converges to $W^s(\wh y)$ in the $C^1$ topology.
\end{proposition}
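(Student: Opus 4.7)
The plan is to mirror the proof of Proposition~\ref{lambda.u-lemma} verbatim, exchanging the roles of $W^s$ and $W^u$ and replacing forward iterates by the inverse branches $f^{-n}_{\hy}$. The reduction to the case $m=0$ will proceed as at the end of the proof of Proposition~\ref{lambda.u-lemma}: applying Proposition~\ref{lambda.u-lemma} (or the $m=0$ case of the stable version below, with $\hy$ replaced by $\hf^m(\hy)$) and composing with the appropriate inverse branch of $f^m$, together with the invariance $f^m(W^s(\hy))\subset W^s(\hf^m(\hy))$, reduces the problem to the case $m=0$.

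For $m=0$ I would use $\hy\in Y'$ to extract a sequence $n_k\to\infty$ with $\hf^{-n_k}(\hy)\to\hy$ and with $\hf^{-n_k}(\hy)$ lying in the same Pesin block as $\hy$. Continuity of unstable manifolds on this block gives $W^u(\hf^{-n_k}(\hy))\to W^u(\hy)$ in the $C^1$ topology; since $y_{-n_k}\to y_0$, the transversality $\Delta\pitchfork W^u(\hy)$ propagates to $\Delta\pitchfork W^u(\hf^{-n_k}(\hy))$ for large $n_k$, producing intersection points $z_{-n_k}$. I would then set up Pesin charts $\Psi_i:=\Psi_{\hf^i(\hy)}^{q^s(\hf^i(\hy)),q^u(\hf^i(\hy))}$ along $\hy$'s backward orbit for $i=0,-1,\ldots,-n_k$, adjusted so that the vertical slice represents $W^u(\hf^i(\hy))$ in each chart. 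In these charts the conjugated inverse branch $F_i^{-1}:=\Psi_i^{-1}\circ f^{-1}_{y_i}\circ \Psi_{i+1}$ admits a block-triangular form dual to~(\ref{eq.diagonal}), with upper-left block expanding the stable coordinate by at least $e^{\chi}$, lower-right block contracting the unstable coordinate by at most $e^{-\chi}$, vanishing upper-right block on the vertical axis (thanks to $f^{-1}_{y_i}(W^u(\hf^{i+1}(\hy)))\subset W^u(\hf^i(\hy))$), and non-linear remainders of $C^0$-norm $O(\ve)$.

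A tangent vector to $\Delta$ at $z_{-n_k}$, written as $(u,v)\in \R^{d_s}\times\R^{d_u}$, satisfies $u\neq 0$ by transversality. Running the inductive estimate of the Claim in the proof of Proposition~\ref{lambda.u-lemma} with the roles of $u$ and $v$ swapped and $F_i^{-1}$ in place of $F_i$, one obtains $\|v_{n_k}\|/\|u_{n_k}\|\to 0$, so that $f^{-n_k}_{\hy}(\Delta)$ contains a disc $D_k$ whose image in $\Psi_{-n_k}$ is $s$-admissible. Fixing one such $n_{k_0}$ and adjusting the window parameters so that admissibility persists at $\Psi_{-n_k}$ for all large $n_k$ (using the backward recurrence and continuity of Pesin charts on the block), repeated application of the stable graph transform produces a sequence of $s$-admissible manifolds at $\Psi_{-n_k}$ converging in $C^1$ to $W^s(\hf^{-n_k}(\hy))$; by the backward recurrence and continuity of $W^s$ on the Pesin block, this in turn converges in $C^1$ to $W^s(\hy)$ in the ambient manifold, as required.

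The main obstacle will be establishing the block-triangular normal form for the inverse branches along the backward orbit of $\hy$: the Pesin chart machinery in~\cite{ALP-23} is tailored to the forward dynamics, and one must verify that the analogous normal form and the $O(\ve)$ bounds on the non-linear remainders remain valid when $f$ is replaced by the specific inverse branches $f^{-1}_{y_i}$, together with the dual invariance statement for $W^u$. All remaining steps---the continuity of $W^u$ on Pesin blocks, the inductive norm estimate, the verification of $s$-admissibility, and the contraction of the stable graph transform---are direct translations of the arguments in the proof of Proposition~\ref{lambda.u-lemma}.
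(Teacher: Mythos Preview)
Your overall plan---mirror the proof of Proposition~\ref{lambda.u-lemma} with inverse branches and stable/unstable roles swapped---is exactly what the paper does (the paper gives no further detail), and the reduction to $m=0$ is fine. However, the bookkeeping in your $m=0$ argument does not dualize correctly at two points.

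First, the base point. In the unstable version one views $\Delta$ in the chart $\Psi_{-n_k}$ and uses $z_{-n_k}\in\Delta\cap W^s(\hf^{-n_k}(\hy))$ so that \emph{forward} iterates stay in the Pesin charts $\Psi_{-n_k},\ldots,\Psi_0$. In your stable version you iterate \emph{backward} via $f^{-n_k}_{\hy}$, hence starting from $\Psi_0$; the point whose backward orbit shadows $y_0,y_{-1},\ldots,y_{-n_k}$ is any $z\in\Delta\cap W^u(\hy)$---this is the $m=0$ hypothesis itself---not a point of $W^u(\hf^{-n_k}(\hy))$. Your $z_{-n_k}$ lies on the vertical axis of $\Psi_{-n_k}$, not $\Psi_0$, so the triangular form along the backward orbit $\Psi_0\to\Psi_{-1}\to\cdots$ is not available at the base point you chose.

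Second, the ``restart'' trick does not dualize as you wrote it. In the unstable proof one places the fixed $u$--admissible $\tilde\Delta_0$ at $\Psi_{-n_k}$ and iterates \emph{forward} to $\Psi_0$, which converges to $W^u(\hy)$ by the definition of the unstable manifold. Your version (admissibility at $\Psi_{-n_k}$ plus stable graph transforms) has no analogous target: stable graph transforms go backward, so starting at $\Psi_{-n_k}$ leads away from the charts you set up. The correct argument is simpler and needs no restart: once the dual Claim gives an $s$--admissible $\tilde\Delta_0\subset f^{-n_{k_0}}_{\hy}(\Delta)$ at $\Psi_{-n_{k_0}}$, just continue applying stable graph transforms backward along $\Psi_{-n_{k_0}}\to\Psi_{-n_{k_0}-1}\to\cdots\to\Psi_{-n_k}$. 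The composition is exactly $f^{-n_k}_{\hy}$ on a subdisc of $\Delta$, and by contraction of the stable graph transform the image is $e^{-\chi(n_k-n_{k_0})}$--close to $W^s(\hf^{-n_k}(\hy))$, which converges to $W^s(\hy)$ by the backward recurrence you already invoked and continuity of $W^s$ on the Pesin block.
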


\bibliographystyle{alpha}
\bibliography{bibliography}{}

\end{document}